\documentclass[11pt]{amsart}

\usepackage{amscd, amssymb, amsthm, enumerate}
\usepackage[all]{xy}

\newtheorem{theorem}{Theorem}[section]
\newtheorem{corollary}[theorem]{Corollary}
\newtheorem{lemma}[theorem]{Lemma}
\newtheorem{proposition}[theorem]{Proposition}

\theoremstyle{definition}
\newtheorem{definition}[theorem]{Definition}
\theoremstyle{remark}
\newtheorem{remark}[theorem]{Remark}
\newtheorem{example}[theorem]{Example}

\newcommand{\cst}{$\rm{C}^*$}
\newcommand{\czero}{{\rm{C}}_0}
\newcommand{\ccomp}{{\rm{C}}_c}
\newcommand{\ccont}{{\rm{C}}}
\newcommand{\cf}{\textit{cf.~}}

\newcommand{\ie}{\textit{i.e.~}}
\newcommand{\mcl}[1]{\text{$\mathcal{#1}$}}
\newcommand{\newfunk}[3]{{#1}\colon{#2}\to {#3}}
\newcommand{\qqxz}{\QQ_n \times_{\delta} \ZZ}

\newcommand{\NN}{\mathbb{N}}

\newcommand{\ZZ}{\mathbb{Z}}
\newcommand{\BB}{\mathbb{B}}
\newcommand{\EE}{\mathbb{E}}
\newcommand{\FF}{\mathbb{F}}
\newcommand{\QQ}{\mathbb{Q}}
\newcommand{\RR}{\mathbb{R}}

\newcommand{\Ll}{\mathcal{L}}

\newcommand{\Oo}{\mathcal{O}}

\begin{document}

\title[Purely infinite partial crossed products]{Purely infinite partial crossed products}

\author[Thierry Giordano]{Thierry Giordano${}^1$}
\address{Department of Mathematics and Statistics, University of Ottawa, 585 King Edward Ave, K1N6P1 Ottawa, Canada}
\email{giordano@uottawa.ca}
\thanks{${}^1$Research supported by NSERC, Canada.}

\author[Adam Sierakowski]{Adam Sierakowski${}^2$}
\address{The School of Mathematics \& Applied Statistics,  University of Wollongong,  Northfields Ave, 2522 NSW, Australia}
\email{asierako@uow.edu.au}
\thanks{${}^2$Communicating author. Research supported by Professor Collins, Handelman and Giordano, Canada.}

\date{May 29 2013}
\maketitle
\begin{abstract}
Let $(\mcl{A},G,\alpha)$ be a partial dynamical system. We show that there is a bijective correspondence between $G$-invariant ideals of $\mcl{A}$ and ideals in the partial crossed product $\mcl{A}\rtimes_{\alpha,r} G$ provided the action is exact and {residually topologically} free. Assuming, in addition, a technical condition---automatic when $\mcl{A}$ is abelian---we show that $\mcl{A}\rtimes_{\alpha,r} G$ is purely infinite if and only if the positive nonzero elements in $\mcl{A}$ are properly infinite in $\mcl{A}\rtimes_{\alpha,r} G$. As an {application} we verify {pure} infiniteness of various partial crossed products, including realisations of the Cuntz algebras $\Oo_n$, $\Oo_A$, $\Oo_\NN$, and $\Oo_\ZZ$ as partial crossed products.
\end{abstract}

\section{Introduction}

In the theory of operator algebras, the crossed product construction has been one of the most important and fruitful tools both to construct examples and to describe the internal structure of operator algebras (in particular the von Neumann algebras).

Partial actions of a discrete group on \cst-algebras and their associated crossed products were gradually introduced in \cite{Exe2} and \cite{McClan}, and since then developed by many authors. Several important classes of \cst-algebras have been realised as crossed products by partial actions, including in particular AF-algebras, Cuntz-Krieger algebras, Bunce-Deddens algebras, among others (see for example \cite{BoaExe, ExeGioGon, Exe5, Exe6, Exe, ExeLacQui, Hop}). The description of \cst-algebras as partial crossed products has also proved useful for the computation of their K-theory.

\medskip

In this paper we pursue the study of partial \cst-dynamical systems and their crossed products associated. We begin by recalling (in Section \ref{section1} and Appendix \ref{appendixA1}) the construction of the partial crossed product $\mcl{A}\rtimes_{\alpha,r}G$ associated to a {\em partial action} $\alpha$ (a compatible collection of isomorphisms $\newfunk{\alpha_t}{\mcl{D}_{t^{-1}}}{\mcl{D}_t}$, $t\in G$ of ideals in $\mcl{A}$). 

\medskip

In Section \ref{section2}, we study the ideal structure of partial crossed products, generalising the results on \cst-dynamical systems obtained by the second author in \cite{Sie}. Recall that a partial action $\alpha$ on a \cst-algebra $\mcl{A}$ has the {\em the intersection property} if every nontrivial ideal in $\mcl{A}\rtimes_{\alpha,r}G$ intersects $\mcl{A}$ nontrivially. Then (Definition \ref{def2.1}) we say that a partial \cst-dynamical system $(A, G, \alpha)$ has the {\em residual intersection property} if for every $G$-invariant ideal $\mcl{I}$ in $\mcl{A}$, the induced partial action of $G$ on $\mcl{A}/\mcl{I}$ has the intersection property. We establish in Theorem \ref{mainthorem} a one-to-one correspondence between ideals in $\mcl{A}\rtimes_{\alpha,r}G$  and $G$-invariant ideals of $\mcl{A}$ provided---in fact if and only if---the partial action $\alpha$ is exact and has the residual intersection property. When the partial action $\alpha$ is {\em minimal} (no nontrivial $G$-invariant ideals in $\mcl{A}$), then the {\em exactness} (any nontrivial $G$-invariant ideal $\mcl{I}$ in $\mcl{A}$ induces a short exact sequence at the level of reduced crossed products) is automatic. 

In \cite{ExeLacQui}, having defined a topologically free partial action (by partial homeomorphisms) on a locally compact space $X$, Exel, Laca, and Quigg proved the simplicity of the partial crossed product $\czero(X)\rtimes_{\alpha,r}G$ under the presence of minimality and topological freeness of $\alpha$. 
In \cite{Leb}, Lebedev extended the definition of topological freeness to non-commutative partial actions and showed that a topologically free partial action has always {\it the intersection property}. With Lebedev's result, we recover in Corollary 2.9 the theorem of Exel, Laca, and Quigg. 

Theorem \ref{mainthorem} allows us also to extend Echterhoff and Laca's work on crossed products: We show that the canonical map $\mcl{J}\mapsto \mcl{J}\cap \mcl{A}$ between ideals in $\mcl{A}\rtimes_{\alpha,r}G$ and $G$-invariant ideals of $\mcl{A}$ restricts to a continuous map from the space of prime ideals of $\mcl{A}\rtimes_{\alpha,r}G$ to the space of $G$-prime ideals in $\mcl{A}$. This restriction is a homeomorphism provided $\alpha$ is exact and residually topologically free, where residual topological freeness is an ideal related version of topological freeness. When $\mcl{A}$ is separable and abelian we show that the space of prime ideals of $\mcl{A}\rtimes_{\alpha,r}G$ is homeomorphic to the quasi-orbit space of ${\rm Prim}\,\mcl{A}$. 

\medskip

In Section \ref{section3} we generalise some of the main results in \cite{RorSie}, by R{\o}rdam and the second named author, to partial \cst-dynamical systems. In particular we give sufficient conditions for a partial crossed products to be purely infinite in the sense of Kirchberg and R{\o}rdam. One of the keys assumptions of Theorem \ref{the4.1} goes 
back to Elliott's notion of {\em proper outerness} (an automorphism $\alpha$ of $\mcl{A}$ is properly outer if $\|\alpha|_{\mcl{I}}-\beta\|=2$ for every $\alpha$-invariant ideal $\mcl{I}$ in $\mcl{A}$ and any inner automorphism $\beta$ of $\mcl{I}$). Based on Kishimoto's work, Olsen and Pedersen proved that an automorphism $\alpha$ of a separable \cst-algebra $\mcl{A}$ is properly outer if and only if $\inf\{\|x (a \delta) x\|: x\in \mcl{B}_+, \|x\|=1\}=0$ for every $a\in{\mcl{A}}$, and every nonzero hereditary \cst-algebra $\mcl{B}$ in $\mcl{A}$, where $\delta$ is the unitary implementing $\alpha$, i.e, $\alpha(a)=\delta a \delta^*$ in $\mcl{A}\rtimes_{\alpha}\ZZ$. Moreover Archbold and Spielberg proved that topological freeness of an action $\alpha$ ensures that $\alpha_t$, $t\neq e$ is properly outer. In Proposition \ref{abeliantopfree}, generalising these results, we prove the equivalence of the following statements for a partial action $\alpha$ on an abelian \cst-algebra $\mcl{A}$: 
\begin{enumerate}[(i)]
\item $\alpha$ is topologically free,
\item $\|\alpha_t|_{\mcl{I}}-id|_\mcl{I}\|=2$ for every $\alpha_t$-invariant ideal $\mcl{I}$ in $\mcl{D}_{t^{-1}}$ (and $t\neq e$),
\item $\inf\{\|x( a\delta_t)x\|: x\in \mcl{B}_+, \|x\|=1\}=0$ for every $a\in{\mcl{D}_t}$, and every nonzero hereditary \cst-algebra $\mcl{B}$ in $\mcl{A}$ (and $t\neq e$).
\end{enumerate}
Without assuming that $\mcl{A}$ is commutative and under the assumption of property (iii) for actions of $G$ on $\mcl{A}/\mcl{I}$ for any $G$-invariant ideal $\mcl{I}$ in $\mcl{A}$ and exactness of $\alpha$ we once again obtain a one-to-one correspondence between ideals in $\mcl{A}\rtimes_{\alpha,r}G$ and $G$-invariant ideals in $\mcl{A}$. However, in this context we can state in Theorem \ref{the4.1} sufficient and necessary conditions for pure infiniteness of the partial crossed products: $\mcl{A}\rtimes_{\alpha,r}G$ is purely infinite if and only if every nonzero positive element in $\mcl{A}$ is properly infinite in
$\mcl{A}\rtimes_{\alpha,r} G$ (under presence of ideal property of $\mcl{A}$).

In the second part of Section \ref{section3}, we study pure infiniteness of partial crossed products $\mcl{A}\rtimes_{\alpha,r} G$ when $\mcl{A}$ is abelian. We establish a geometrical condition sufficient---and sometimes also necessary---to obtain pure infiniteness of partial crosses products. Specifically we show in Theorem \ref{the4.2}  that an exact and residually topologically free partial action on a totally disconnected locally compact Hausdorff space $X$ gives a purely infinite partial crossed product $\czero(X)\rtimes_{\alpha,r} G$ provided that every compact and open subset of $X$ is $(G,\tau_X)$-paradoxical (Definition \ref{paradox}).

\medskip

{In Section \ref{section4},  we apply our results to the {a variety of} know examples of partial crossed products:}

(i) Hopenwasser constructs a partial action of the semidirect product $\qqxz$ of $n$-adic rationals by the integers $\ZZ$ on the Cantor set $X_C$ such that the partial crossed product $\ccont(X_C)\rtimes_{\alpha} (\qqxz)$ is isomorphic to the Cuntz algebra $\mathcal{O}_n$. We verify that for this (exact and residually topologically free) action the compact and open subset of $X_C$ are $(\qqxz,\tau_{X_C})$-paradoxical.

(ii) For a $\{0,1\}$-valued $n$ by $n$ matrix $A=[a_{ij}]$ with no zero rows Exel, Laca, and Quigg realised $\mathcal O_A=\ccont^*(s_1,\dots, s_n: \sum_j s_j s_j^* = 1, \sum_j a_{ij} s_j s_j^* = s_i^* s_i)$ as a partial crossed product $\ccont(X_A)\rtimes_{\alpha} \FF_n$, where $\FF_n$ denotes the free group of rank $n$. The algebra $\mathcal O_A$ was defined by Astrid an Huef and Raeburn as an universal analogue of the Cuntz-Krieger algebra. When $\alpha$ is exact and residually topologically free we prove that $\ccont(X_A)\rtimes_{\alpha} \FF_n$ is purely infinite if and only if the compact and open subset of $X_A$ are $(\FF_n,\tau_{X_A})$-paradoxical. We use graph \cst-algebraic results by Raeburn, Kumjian, Pask, Raeburn, Renault, Mann, Sutherland, Hong and Szymanski among others.

(iii) Boava and Exel showed that for each integral domain $R$ with finite quotients $R/(m)$, $m\neq 0$, the semidirect product $K \rtimes K^\times$ of $K$ by $K\backslash\{0\}$ (where $K$ is the field of fractions of $R$) acts on the Cantor set $X_R$ such that the partial crossed product $\ccont(X_R)\rtimes_{\alpha} (K \rtimes K^\times)$ is isomorphic to the regular \cst-algebra $\mathfrak{A}[R]$ of $R$. We verify that for this (exact and residually topologically free) action the compact and open subset of $X_R$ are $(K \rtimes K^\times,\tau_{X_R})$-paradoxical.

\medskip
 
In \cite{Aba}, Abadie described a class of partial crossed products $\czero(X)\rtimes_{\alpha,r} G$ Morita-Rieffel equivalent to ordinary crossed products. In Section \ref{section5}, we prove that for such a partial crossed product $\czero(X)\rtimes_{\alpha,r} G$ if the compact and open subsets in the spectrum of the the original \cst-algebra of the partial crossed product are paradoxical, then the same property must holds for the corresponding (ordinary) crossed product.

Much of the work described in this paper was done while the second author held a postdoctoral fellowship at the University of Ottawa. He wishes to thank the members of the Department of Mathematics and Statistics at the University of Ottawa for their warm hospitality.

\section{Definition of partial crossed product.}
\label{section1}
Let $\mcl{A}$ be a \cst-algebra and $G$ be a discrete group. We recall the definition of a partial dynamical system and the corresponding partial crossed products (see Appendix \ref{appendixA1} for more details).

\begin{definition}
Let $\mcl{A}$ be a \cst-algebra and $G$ be a discrete group. A {\em partial action of $G$ on $\mcl{A}$}, denoted by $\alpha $, is a collection $(\mcl{D}_t)_{t \in G}$ of closed two-sided ideals of $\mcl{A}$ and a collection $(\alpha_t)_{t \in G}$ of $^*$-isomorphisms $\newfunk{\alpha_t}{\mcl{D}_{t^{-1}}}{\mcl{D}_t}$ such that

\begin{enumerate}[(i)]
\item $\mcl{D}_e=\mcl{A}$, where $e$ represents the identity element of $G$;
\item $\alpha_s^{-1}(\mcl{D}_s\cap \mcl{D}_{t^{-1}})\subseteq\mcl{D}_{(ts)^{-1}}$;
\item $\alpha_t\circ \alpha_s(x)=\alpha_{ts}(x), \ \ \forall \: x\in\alpha_s^{-1}(\mcl{D}_s\cap \mcl{D}_{t^{-1}})$.
\end{enumerate}
\end{definition}

The triple $(\mcl{A},G,\alpha)$ is called a {\em partial dynamical system}. The equivalence between this definition of partial action by Dokuchaev and Exel in \cite{DokExe} and the original definition of McClanahan (\cite{McClan}) was proven in  \cite{QuiRae}. For the case when $\mcl{A}$ is abelian we refer to \cite{Aba, ExeGioGon, ExeLacQui}.

\begin{definition}
\label{delpartialsystem}
Let $(\mcl{A},G,\alpha)$ be a partial dynamical system. Let $\mcl{L}$ be the normed $^*$-algebra of the finite formal sums $\sum_{t \in G} a_t\delta_t$, where $a_t\in\mcl{D}_t$. The operations and  the norm in $\mcl{L}$ are given by
$$(a_t\delta_t)(a_s\delta_s)=\alpha_t(\alpha_{t^{-1}}(a_t)a_s)\delta_{ts}, \ \ \ (a_t\delta_t)^*=\alpha_{t^{-1}}(a_t^*)\delta_{t^{-1}},$$ $$\|\sum_{t \in G} a_t\delta_t\|=\sum_{t \in G} \|a_t\|.$$
Let $B_t$ denote the vector subspace $\mcl{D}_t\delta_t$ of $\mcl{L}$. The family $(B_t)_{t \in G}$ generates a Fell bundle. The full crossed product $\mcl{A}\rtimes_{\alpha}G$ and the reduced crossed product $\mcl{A}\rtimes_{\alpha,r}G$ are, respectively, the full and the reduced cross sectional algebras of $(B_t)_{t \in G}$. Both crossed products are completions of $\mcl{L}$ with respect to a certain \cst-norm. We recall the construction of these crossed products in Appendix \ref{appendixA1}. 
\end{definition}
We suppress the canonical inclusion map $\mcl{A}\to \mcl{A}\rtimes_{\alpha}G$, $a\mapsto a\delta_e$ and view $\mcl{A}$ as a sub-\cst-algebra of $\mcl{A}\rtimes_{\alpha}G$. All ideals (throughout out this paper) are assumed to be closed and two-sided. The set $\tau_X$ denotes the topology of a topological space $X$. The $C^*$-algebra of continuous functions vanishing at infinity on a locally compact Hausdorff space $X$ is denoted by  $\czero(X)$. Every abelian $C^*$-algebra arises in this form. When the algebra is unital then $X$ is compact and we emphasise this fact by writing it as $\ccont(X)$.

\section{Ideal structure of partial crossed product.}
\label{section2}
Before stating our results on the ideal structure of a partial crossed product {generalising \cite{Sie}} we need a few definitions.
	
\begin{definition}
\label{def2.1}
Let $(\mcl{A},G,\alpha)$ be a partial dynamical system. Then
\begin{itemize}
\item[(i)] A closed two-sided ideal $\mcl{I}$ of $\mcl{A}$ is \emph{$G$-invariant} provided that $\alpha_t(\mcl{I}\cap \mcl{D}_{t^{-1}})\subseteq \mcl{I}$ for every $t\in G$. 
\item[(ii)] The partial action is \emph{exact} if every $G$-invariant ideal $\mcl{I}$ of $\mcl{A}$ induces a short exact sequence 
\begin{displaymath}
\xymatrix{0 \ar[r] & \mcl{I}\rtimes_{\alpha,r} G  \ar[r] & \mcl{A}\rtimes_{\alpha,r} G  \ar[r] & \mcl{A/I}\rtimes_{\alpha,r} G \ar[r] & 0}
\end{displaymath}
at the level of reduced crossed products. 
\item[(iii)] The partial action has the \emph{residual intersection property} if for every $G$-invariant ideal $\mcl{I}$ of $\mcl{A}$ the intersection of $\mcl{A/I}$ with any nonzero ideal in $\mcl{A/I}\rtimes_{\alpha,r} G$ is nonzero.
\end{itemize}
\end{definition}

\begin{theorem}
\label{mainthorem}
Let $(\mcl{A},G,\alpha)$ be a partial dynamical system. There is a one-to-one correspondence between ideals in $\mcl{A}\rtimes_{\alpha,r}G$  and $G$-invariant ideals of $\mcl{A}$ if and only if the partial action is exact and has the residual intersection property. 
\end{theorem}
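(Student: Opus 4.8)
The plan is to exhibit the mutually inverse pair of maps realising the correspondence and then to read off exactness and the residual intersection property from injectivity of one of them. Concretely, set $\Phi(\mcl{J})=\mcl{J}\cap\mcl{A}$ and $\Psi(\mcl{I})=\mcl{I}\rtimes_{\alpha,r}G$, and interpret ``one-to-one correspondence'' as the assertion that $\Phi$ and $\Psi$ are inclusion-preserving bijections inverse to one another. First I would check that $\Phi(\mcl{J})$ is a $G$-invariant ideal: for $a\in(\mcl{J}\cap\mcl{A})\cap\mcl{D}_{t^{-1}}$ one writes $\alpha_t(a)\delta_e$ as the norm limit of the products $(\alpha_t(e_\lambda)\delta_t)(a\delta_e)(e_\mu\delta_{t^{-1}})=\alpha_t(e_\lambda a e_\mu)\delta_e$, where $(e_\lambda),(e_\mu)$ are approximate units of $\mcl{D}_{t^{-1}}$; each such product lies in $\mcl{J}$, so $\alpha_t(a)\in\mcl{J}\cap\mcl{A}$. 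For $\Psi$ I would invoke exactness directly: the $G$-equivariant quotient $\mcl{A}\to\mcl{A}/\mcl{I}$ induces a surjection $q\colon\mcl{A}\rtimes_{\alpha,r}G\to(\mcl{A}/\mcl{I})\rtimes_{\alpha,r}G$ with $\mcl{I}\rtimes_{\alpha,r}G\subseteq\ker q$ in all cases, and exactness is precisely the identity $\Psi(\mcl{I})=\ker q$, so $\Psi(\mcl{I})$ is an ideal.

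For the \emph{if} direction the two easy identities come first. That $\Phi\Psi=\mathrm{id}$, i.e.\ $(\mcl{I}\rtimes_{\alpha,r}G)\cap\mcl{A}=\mcl{I}$, follows from the canonical faithful conditional expectation $E\colon\mcl{A}\rtimes_{\alpha,r}G\to\mcl{A}$, whose restriction to $\mcl{I}\rtimes_{\alpha,r}G$ is the expectation onto $\mcl{I}$: if $a\in\mcl{A}\cap(\mcl{I}\rtimes_{\alpha,r}G)$ then $a=E(a)\in\mcl{I}$. The inclusion $\Psi\Phi(\mcl{J})\subseteq\mcl{J}$, i.e.\ $\mcl{I}\rtimes_{\alpha,r}G\subseteq\mcl{J}$ for $\mcl{I}=\mcl{J}\cap\mcl{A}$, needs neither hypothesis: for $a\in\mcl{I}\cap\mcl{D}_t$ one has $a\delta_t=\lim_\lambda(a\delta_e)(e_\lambda\delta_t)$ with $(e_\lambda)$ an approximate unit of $\mcl{D}_t$, and since $a\delta_e\in\mcl{J}$ every such product lies in the ideal $\mcl{J}$; these elements densely span $\mcl{I}\rtimes_{\alpha,r}G$.

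The reverse inclusion $\mcl{J}\subseteq\mcl{I}\rtimes_{\alpha,r}G=\ker q$ is the crux, and it is where both hypotheses genuinely enter; it amounts to $q(\mcl{J})=0$. Since $q(\mcl{J})$ is an ideal of $(\mcl{A}/\mcl{I})\rtimes_{\alpha,r}G$, the residual intersection property, applied to the $G$-invariant ideal $\mcl{I}$, reduces the claim to $q(\mcl{J})\cap(\mcl{A}/\mcl{I})=0$. Here I would use the conditional expectation $\overline{E}$ on the quotient crossed product together with the compatibility $\overline{E}\circ q=\pi\circ E$, where $\pi\colon\mcl{A}\to\mcl{A}/\mcl{I}$: given $x=q(j)\in q(\mcl{J})\cap(\mcl{A}/\mcl{I})$ with $j\in\mcl{J}$, one gets $x=\overline{E}(x)=\pi(E(j))$. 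The \textbf{main obstacle} is the technical lemma $E(\mcl{J})\subseteq\mcl{J}$ for every ideal $\mcl{J}$ --- equivalently, that each Fourier coefficient of an element of an ideal again lies in that ideal. Granting it, $E(j)\in\mcl{J}\cap\mcl{A}=\mcl{I}$, whence $x=\pi(E(j))=0$. This lemma, standard for reduced cross-sectional algebras of Fell bundles, is the one genuinely nontrivial input, and I would isolate and prove it (via the Fourier-coefficient maps $E_t$ and positivity) before assembling the rest.

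For the \emph{only if} direction I would assume $\Phi$ injective and recover both properties from the two ideals $\ker q$ and $\mcl{I}\rtimes_{\alpha,r}G$. Both meet $\mcl{A}$ in exactly $\mcl{I}$ (the former since $q|_{\mcl{A}}=\pi$, the latter by $\Phi\Psi=\mathrm{id}$) and satisfy $\mcl{I}\rtimes_{\alpha,r}G\subseteq\ker q$; injectivity of $\Phi$ forces $\ker q=\mcl{I}\rtimes_{\alpha,r}G$, which is exactness. Likewise, if some nonzero ideal $\mcl{K}$ of $(\mcl{A}/\mcl{I})\rtimes_{\alpha,r}G$ satisfied $\mcl{K}\cap(\mcl{A}/\mcl{I})=0$, then $q^{-1}(\mcl{K})$ would strictly contain $\mcl{I}\rtimes_{\alpha,r}G$ yet still meet $\mcl{A}$ in $\mcl{I}$, again contradicting injectivity of $\Phi$; hence the residual intersection property holds. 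Finally I would note that $\Phi$ and $\Psi$ visibly preserve inclusions, so the established bijection is in fact an isomorphism of ideal lattices.
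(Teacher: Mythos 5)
Your architecture (the mutually inverse maps $\Phi,\Psi$, the reduction of sufficiency to $\mcl{J}\subseteq(\mcl{J}\cap\mcl{A})\rtimes_{\alpha,r}G$, and the necessity argument run off the two ideals $\ker q$ and $\mcl{I}\rtimes_{\alpha,r}G$) matches the paper, and your necessity direction is correct --- indeed a cleaner phrasing of the paper's. But the sufficiency direction has a genuine gap at exactly the step you flag as the crux. The ``technical lemma'' $E_{\mcl{A}}(\mcl{J})\subseteq\mcl{J}$ for every ideal $\mcl{J}$ of $\mcl{A}\rtimes_{\alpha,r}G$ is not a standard fact about reduced cross-sectional algebras of Fell bundles; it is false in general. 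Take $\mcl{A}=\CC$ and $G=\ZZ$ acting trivially (a global action, hence a Fell bundle with all fibres $\CC$): then $\mcl{A}\rtimes_{\alpha,r}G=\ccont_r^*(\ZZ)\cong\ccont(\TT)$, the expectation is the zeroth Fourier coefficient $E_{\mcl{A}}(f)=\hat f(0)$, and for the ideal $\mcl{J}=\{f\in\ccont(\TT)\colon f(1)=0\}$ one has $E_{\mcl{A}}(\mcl{J})=\CC\nsubseteq\mcl{J}$ (e.g.\ $f(z)=1-z$ has $\hat f(0)=1$). So no argument ``via the Fourier-coefficient maps and positivity'' can establish your lemma; the only unconditional inclusion is the opposite one, $\mcl{J}\cap\mcl{A}\subseteq E_{\mcl{A}}(\mcl{J})$ (Proposition \ref{appendix3}). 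Your lemma does hold under the hypotheses of the theorem, but only as a \emph{consequence} of the theorem ($\mcl{J}=(\mcl{J}\cap\mcl{A})\rtimes_{\alpha,r}G$ forces $E_{\mcl{A}}(\mcl{J})=\mcl{J}\cap\mcl{A}$), so taking it as an input proved ``before assembling the rest'' is circular.

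The gap is repairable, and the repair is the paper's argument: to get $q(\mcl{J})\cap(\mcl{A}/\mcl{I})=0$ you need no conditional expectation at all. If $x\in q(\mcl{J})\cap(\mcl{A}/\mcl{I})$, write $x=q(j)=q(a)$ with $j\in\mcl{J}$ and $a\in\mcl{A}$; then $j-a\in\ker q=\mcl{I}\rtimes_{\alpha,r}G$ by exactness, and since you have already shown $\mcl{I}\rtimes_{\alpha,r}G\subseteq\mcl{J}$, it follows that $a\in\mcl{J}\cap\mcl{A}=\mcl{I}$, whence $x=q(a)=0$. After this purely algebraic step, the residual intersection property gives $q(\mcl{J})=0$ and exactness gives $\mcl{J}\subseteq\mcl{I}\rtimes_{\alpha,r}G$, exactly as you intended. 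One further small point: in the necessity direction you use that $\mcl{I}\rtimes_{\alpha,r}G$ is an ideal of $\mcl{A}\rtimes_{\alpha,r}G$, but your only justification (identifying it with $\ker q$) invoked exactness, which is not available there; instead note that $\mcl{I}\rtimes_{\alpha,r}G={\rm Ideal}[\mcl{I}]$ holds unconditionally for $G$-invariant $\mcl{I}$ (Proposition \ref{appendix3}, or your own approximate-unit computation combined with $G$-invariance).
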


\begin{proof}
\emph{Sufficiency:} Suppose that $\alpha$ is exact and has the residual intersection property. Let $\newfunk{E_{\mcl{A}}}{\mcl{A}\rtimes_{\alpha,r}G}{\mcl{A}}$ denote the usual conditional expectation on the crossed product (see Appendix \ref{appendixA1}). Let ${\rm Ideal}[S]$ denote the smallest ideal in $\mcl{A}\rtimes_{\alpha,r}G$ generated by $S\subseteq \mcl{A}\rtimes_{\alpha,r}G$. Let $\varphi$ denote the map $\mcl{J}\mapsto\mcl{J\cap A}$ from the ideals in $\mcl{A}\rtimes_{\alpha,r}G$ into $G$-invariant ideals in $\mcl{A}$. Using that ${\rm Ideal}[\mcl{I}]\cap \mcl{A} = \mcl{I}$, for any $G$-invariant ideals $\mcl{I}$ of $\mcl{A}$, \cf{}Proposition \ref{appendix3}, we conclude that $\varphi$ is surjective. To show $\varphi$ is injective it is enough to show that $\mcl{J}={\rm Ideal}[E_{\mcl{A}}(\mcl{J})]$ for every ideal $\mcl{J}$ in $\mcl{A}\rtimes_{\alpha,r}G$. If we have two ideals $\mcl{J}_1$, $\mcl{J}_2$ of $\mcl{A}\rtimes_{\alpha,r}G$ with the same intersection it then follows that
$$\mcl{J}_1= {\rm Ideal}[E_{\mcl{A}}(\mcl{J}_1)\cap\mcl{A}]\subseteq {\rm Ideal}[\mcl{J}_1\cap\mcl{A}]= {\rm Ideal}[\mcl{J}_2\cap\mcl{A}]\subseteq {\rm Ideal}[E_{\mcl{A}}(\mcl{J}_2)]=\mcl{J}_2,$$
using that $E_{\mcl{A}}(\mcl{J}_1)\subseteq {\rm Ideal}[E_{\mcl{A}}(\mcl{J}_1)]=\mcl{J}_1$, and $\mcl{J}_2\cap \mcl{A}\subseteq E_{\mcl{A}}(\mcl{J}_2)$ (\cf{}Proposition \ref{appendix3}). Fix any ideal $\mcl{J}$ of $\mcl{A}\rtimes_{\alpha,r}G$.

{(i) $\mcl{J}\subseteq {\rm Ideal}[E_{\mcl{A}}(\mcl{J})]$:} Let $\mcl{I}$ denote the (smallest) $G$-invariant ideal in $\mcl{A}$ generated by $E_{\mcl{A}}(\mcl{J})$. By Proposition \ref{appendix1} we have the commuting diagram
$$
\xymatrix{0 \ar[r] & \mcl{I}\rtimes_{\alpha,r}G \ar[r]^-{\iota} \ar[d]^-{E_\mcl{I}} & \mcl{A}\rtimes_{\alpha,r}G  \ar[r]^-{\rho} \ar[d]^-{E_\mcl{A}}& \mcl{A/I}\rtimes_{\alpha,r}G \ar[r] \ar[d]^-{E_{\mcl{A/I}}}& 0\\
0 \ar[r] & \mcl{I} \ar[r] & \mcl{A} \ar[r] & \mcl{A/I} \ar[r] & 0}
$$	
For each $x\in \mcl{J}^{+}$ we have $E_{\mcl{A}}(x)\in \mcl{I}$ and hence that $E_{\mcl{A/I}}(\rho(x))=E_{\mcl{A}}(x)+\mcl{I}=0$. By exactness $\ker \rho \subseteq {\rm Ideal}[\mcl{I}]$ (not true in general). Hence $x\in \ker \rho = {\rm Ideal}[\mcl{I}] = {\rm Ideal}[E_{\mcl{A}}(\mcl{J})]$. Since every element in $\mcl{J}$ is a linear combination of positive elements $\mcl{J}\subseteq {\rm Ideal}[E_{\mcl{A}}(\mcl{J})]$.

{(ii) ${\rm Ideal}[E_{\mcl{A}}(\mcl{J})]\subseteq \mcl{J}$:} Let $\mcl{I}$ denote the intersection $\mcl{J}\cap \mcl{A}$. By exactness we have {a} short exact sequence
$$
\xymatrix{0 \ar[r] & \mcl{I}\rtimes_{\alpha,r}G \ar[r]^-{\iota} & \mcl{A}\rtimes_{\alpha,r}G  \ar[r]^-{\rho} & \mcl{A/I}\rtimes_{\alpha,r}G \ar[r] & 0\\
}
$$
With $\mcl{K}:=\mcl{I}\rtimes_{\alpha,r}G \subseteq \mcl{J}$ we can therefore make the following identifications:
$$\rho(\mcl{J})=\mcl{J/K}, \ \ \ \mcl{A/I=A/(K\cap A)=(A+K)/K}.$$
Suppose that $\mcl{J/K}$ and $\mcl{(A+K)/K}$ has a nonzero intersection. Then there exist $j\in \mcl{J}$ and $a\in \mcl{A}$ such that $j+\mcl{K}=a+\mcl{K}\neq \mcl{K}$. Since $\mcl{K}\subseteq \mcl{J}$ it follows that $a\in \mcl{J}$ and hence that $a\in \mcl{J\cap A=I\subseteq K}$. But then $a+\mcl{K}=\mcl{K}$ giving a contradiction. We conclude that $\rho(\mcl{J})\cap \mcl{A/I}=0$. The residual intersection property implies that $\rho(\mcl{J})=0$. By Proposition \ref{appendix1} we have the commuting diagram
$$
\xymatrix{0 \ar[r] & \mcl{I}\rtimes_{\alpha,r}G \ar[r]^-{\iota} \ar[d]^-{E_\mcl{I}} & \mcl{A}\rtimes_{\alpha,r}G  \ar[r]^-{\rho} \ar[d]^-{E_\mcl{A}}& \mcl{A/I}\rtimes_{\alpha,r}G \ar[r] \ar[d]^-{E_{\mcl{A/I}}}& 0\\
0 \ar[r] & \mcl{I} \ar[r] & \mcl{A} \ar[r] & \mcl{A/I} \ar[r] & 0}
$$	
For each $x\in \mcl{J}$ we have that $E_{\mcl{A/I}}(\rho(x))=E_{\mcl{A}}(x)+\mcl{I}=0$. Hence $E_{\mcl{A}}(x)\in \mcl{I=J\cap A}$. We conclude that ${\rm Ideal}[E_{\mcl{A}}(\mcl{J})]\subseteq \mcl{J}$.

\emph{Necessity:} Suppose that $\varphi$ is bijective, where $\varphi$ denotes the map $\mcl{J}\mapsto\mcl{J\cap A}$ from the ideals in $\mcl{A}\rtimes_{\alpha,r}G$ into $G$-invariant ideals in $\mcl{A}$. As previously let $\newfunk{E_{\mcl{A}}}{\mcl{A}\rtimes_{\alpha,r}G}{\mcl{A}}$ denote the conditional expectation on the crossed product.

{(iii) Exactness:} Fix any $G$-invariant ideal $\mcl{I}$ of $\mcl{A}$. By Proposition \ref{appendix1} we have the commuting diagram
$$
\xymatrix{0 \ar[r] & \mcl{I}\rtimes_{\alpha,r}G \ar[r]^-{\iota} \ar[d]^-{E_\mcl{I}} & \mcl{A}\rtimes_{\alpha,r}G  \ar[r]^-{\rho} \ar[d]^-{E_\mcl{A}}& \mcl{A/I}\rtimes_{\alpha,r}G \ar[r] \ar[d]^-{E_{\mcl{A/I}}}& 0\\
0 \ar[r] & \mcl{I} \ar[r] & \mcl{A} \ar[r] & \mcl{A/I} \ar[r] & 0}
$$
By assumption $\mcl{J}:=\ker\rho$ has the form $(\mcl{J\cap A})\rtimes_{\alpha,r} G$, \cf{}Proposition \ref{appendix3}. This implies that $E_{\mcl{A/I}}(\rho(\mcl{J}))=E_{\mcl{A}}(\mcl{J})+\mcl{I}=0$.  Hence $E_{\mcl{A}}(\mcl{J})\subseteq \mcl{I}$. By Proposition \ref{appendix3} we also have that $\mcl{J\cap A}\subseteq E_{\mcl{A}}(\mcl{J})$. We conclude
$$\ker\rho=(\mcl{J\cap A})\rtimes_{\alpha,r} G\subseteq{\rm Ideal}[E_{\mcl{A}}(\mcl{J})]\subseteq \mcl{I}\rtimes_{\alpha,r} G.$$

{(iv) Residual intersection property:} Fix any $G$-invariant ideal $\mcl{I}$ of $\mcl{A}$ and any ideal $\mcl{J}$ of $\mcl{A/I}\rtimes_{\alpha,r} G$ with zero intersection with $\mcl{A/I}$. By Proposition \ref{appendix1} we have the commuting diagram
$$
\xymatrix{0 \ar[r] & \mcl{I}\rtimes_{\alpha,r}G \ar[r]^-{\iota} \ar[d]^-{E_\mcl{I}} & \mcl{A}\rtimes_{\alpha,r}G  \ar[r]^-{\rho} \ar[d]^-{E_\mcl{A}}& \mcl{A/I}\rtimes_{\alpha,r}G \ar[r] \ar[d]^-{E_{\mcl{A/I}}}& 0\\
0 \ar[r] & \mcl{I} \ar[r] & \mcl{A} \ar[r] & \mcl{A/I} \ar[r] & 0}
$$
By assumption $\mcl{J}_1:=\rho^{-1}(\mcl{J})$ has the form $(\mcl{J}_1\cap \mcl{A})\rtimes_{\alpha,r} G$, \cf{}Proposition \ref{appendix3}. Hence
\begin{align*}
E_{\mcl{A/I}}(\mcl{J})&=E_{\mcl{A/I}}(\rho(\mcl{J}_1))=E_{\mcl{A}}(\mcl{J}_1)+\mcl{I}=(\mcl{J}_1\cap \mcl{A})+\mcl{I}\\
&=E_{\mcl{A}}(\mcl{J}_1\cap \mcl{A})+\mcl{I}=E_{\mcl{A/I}}(\rho(\mcl{J}_1\cap \mcl{A}))\subseteq E_{\mcl{A/I}}(\rho(\mcl{J}_1)\cap \rho(\mcl{A}))\\
&=E_{\mcl{A/I}}(\mcl{J}\cap \mcl{A/I})=\mcl{J}\cap \mcl{A/I}=0
\end{align*}
By the faithfulness of the conditional expectation (on positive elements) we conclude that $\mcl{J}=0$.
\end{proof}

\begin{remark}
Exactness of a partial action is somehow mysterious because there are no concrete examples of a partial action that is not exact (even though existence has been established). Nevertheless, exactness plays an important role as we have seen above. Here is another result (\cite[Proposition 2.2.4]{Sie2}), proved by the second named author, relying heavily on exactness:

\emph{Let $(\mcl{A},G,\alpha)$ be a partial dynamical system. The action $\alpha$ is exact and $\mcl{A}\rtimes_{\alpha,r} G\cong \mcl{A}\rtimes_{\alpha} G$ if, and only if, $\mcl{A/I}\rtimes_{\alpha,r} G\cong \mcl{A/I}\rtimes_{\alpha} G$ for every $G$-invariant ideal $\mcl{I}$ in $\mcl{A}$.}
\end{remark}

If $\mcl{A}$ is a \cst-algebra, let ${\rm Prim}\,\mcl{A}$ denote its primitive ideal space and $\hat{\mathcal{A}}$ it spectrum. Moreover, if $\mcl{J}$ is a closed two-sided ideal of $\mcl{A}$, then ${\rm supp}\, \mcl{J}$ will denote the subset $\{\mcl{I}\in {\rm Prim}\,\mcl{A}\colon \mcl{J}\nsubseteq \mcl{I} \}$ and $\hat{\mathcal{A}}^\mcl{J} = \{ [\pi] \in \hat{\mathcal{A}} : \pi (\mcl{J}) \neq 0 \}$. Following Lebedev in \cite{Leb}, we define the map $\newfunk{\theta_t}{{\rm supp}\, \mcl{D}_{t^{-1}}}{{\rm supp}\, \mcl{D}_t}$ as follows; for any point $x \in {\rm supp}\, \mcl{D}_{t^{-1}}$ such that $x = {\rm ker}\, \pi$, where $[\pi] \in \hat{\mathcal{A}}^{\mcl{D}_{t^{-1}}}$, we set 
$$\theta_t(x) = \ker\, (\pi\circ\alpha_{t^{-1}}), \ \ \ t\in G.$$
Notice that we then have that $(\theta_t)_{t\in G}$ is an partial action of $G$ by partial homeomorphisms of ${\rm Prim}\,\mcl{A}$ {(see Appendix \ref{appendixA2} for more details).}

\begin{definition}
Let $(\mcl{A},G,\alpha)$ be a partial dynamical system.
\begin{itemize}
\item[(i)] The partial action $\alpha$ is \emph{topologically free} if for any finite set $F$ in $G\setminus\{e\}$ the union
$$\bigcup_{t\in F} \big\{x \in {\rm supp}\, \mcl{D}_{t}: \theta_{t} (x)=x   \big\}$$ 
has empty interior, \cf{}\cite{Leb}.
\item[(ii)] The partial action $\alpha$ is \emph{{residually topologically} free} if the induced action of $G$ on $\mcl{A/I}$ is topologically free for every $G$-invariant ideal $\mcl{I}$ of $\mcl{A}$.
\end{itemize}
\end{definition}

In \cite{Leb}, Lebedev shows that a {residually topologically} free action has always the residual intersection property (see Appendix \ref{appendixA2} for details). Hence, we have

\begin{corollary}
\label{cor1}
Let $(\mcl{A},G,\alpha)$ be a partial dynamical system. Suppose that the action is exact and {residually topologically} free. Then there is a one-to-one correspondence between ideals in $\mcl{A}\rtimes_{\alpha,r}G$ and $G$-invariant ideals in $\mcl{A}$.
\end{corollary}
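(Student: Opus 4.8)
The plan is to derive Corollary~\ref{cor1} directly from Theorem~\ref{mainthorem}, since the corollary is essentially a special case where the abstract hypothesis of the residual intersection property is replaced by the more verifiable condition of residual topological freeness. First I would recall exactly what Theorem~\ref{mainthorem} provides: the asserted one-to-one correspondence between ideals in $\mcl{A}\rtimes_{\alpha,r}G$ and $G$-invariant ideals of $\mcl{A}$ holds precisely when the partial action is \emph{exact} and has the \emph{residual intersection property}. Exactness is one of the two hypotheses assumed in the corollary, so it carries over verbatim. The entire content of the proof therefore reduces to verifying that the remaining hypothesis of the corollary—residual topological freeness—implies the residual intersection property, after which Theorem~\ref{mainthorem} applies directly.

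The key step is the implication ``residually topologically free $\Rightarrow$ residual intersection property.'' Here I would invoke Lebedev's result from \cite{Leb}, which the excerpt explicitly flags as available: a residually topologically free action always has the residual intersection property. Tracing the definitions, residual topological freeness means that for every $G$-invariant ideal $\mcl{I}$ of $\mcl{A}$, the induced partial action of $G$ on the quotient $\mcl{A}/\mcl{I}$ is topologically free (Definition following Theorem~\ref{mainthorem}). By Lebedev's theorem, a topologically free partial action has the intersection property, \ie\ every nonzero ideal in $(\mcl{A}/\mcl{I})\rtimes_{\alpha,r}G$ meets $\mcl{A}/\mcl{I}$ nontrivially. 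Quantifying over all $G$-invariant ideals $\mcl{I}$, this is exactly the statement that the action has the residual intersection property as given in Definition~\ref{def2.1}(iii). Thus the two verbal conditions match up after one unwinds the quotient-level definitions.

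With both hypotheses of Theorem~\ref{mainthorem} now established—exactness by assumption and the residual intersection property via Lebedev—the desired one-to-one correspondence follows immediately by applying the sufficiency direction of the theorem. I would phrase the conclusion as: since $\alpha$ is exact and, by residual topological freeness together with Lebedev's theorem, has the residual intersection property, Theorem~\ref{mainthorem} yields the bijective correspondence $\mcl{J}\mapsto\mcl{J}\cap\mcl{A}$ between ideals of $\mcl{A}\rtimes_{\alpha,r}G$ and $G$-invariant ideals of $\mcl{A}$.

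The main obstacle here is not in the logical structure, which is a clean one-line deduction, but in confirming that Lebedev's intersection-property result in \cite{Leb} is stated at the level of generality needed—namely for noncommutative partial actions and phrased in terms of the induced partial action $(\theta_t)$ on ${\rm Prim}\,\mcl{A}$ as set up in the excerpt. Since the authors have already reconciled Lebedev's topological freeness with their own definition (via the maps $\theta_t$ on ${\rm supp}\,\mcl{D}_t$) and assert the implication to the residual intersection property in the sentence preceding the corollary, the technical matching is deferred to Appendix~\ref{appendixA2}; the proof of the corollary itself can legitimately cite that implication as a black box and conclude.
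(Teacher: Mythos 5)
Your proposal is correct and follows exactly the paper's own route: the paper derives Corollary \ref{cor1} by citing Lebedev's result (Theorem \ref{appendix2}, applied to each quotient $\mcl{A/I}$) to upgrade residual topological freeness to the residual intersection property, and then invoking Theorem \ref{mainthorem}. Your unwinding of the quotient-level definitions and your remark that the technical matching of Lebedev's noncommutative setting is deferred to Appendix \ref{appendixA2} is precisely how the paper treats it.
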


There is a useful consequence of Corollary \ref{cor1} regarding $G$-prime ideals. Recall that a $G$-invariant ideal $\mcl{I}$ in a \cst-algebra $\mcl{A}$ is called {\em $G$-prime} (resp. {\em prime} if $G$ is the trivial group) if for any pair of $G$-invariant ideals $\mcl{J}$, $\mcl{K}$ of $\mcl{A}$ with $\mcl{J}\cap \mcl{K}\subseteq \mcl{I}$ we have $\mcl{J}\subseteq \mcl{I}$ or $\mcl{K}\subseteq \mcl{I}$, \cf\cite{EchLac}. Let $\mcl{I}(\mcl{A})$ denote the set of closed two-sided ideals in $\mcl{A}$. Imposing that the set $\{\mcl{J}\in  \mcl{I}(\mcl{A}) \colon \mcl{I}\not\subseteq \mcl{J}\}$ is open for any $\mcl{I}\in \mcl{I}(\mcl{A})$ defines a sub-basis for the Fell-topology on $\mcl{I}(\mcl{A})$. This topology induces topologies on the set of prime and $G$-prime ideals in $\mcl{I}(\mcl{A})$.
\begin{corollary}
\label{cor3}
Let $(\mcl{A},G,\alpha)$ be a partial dynamical system. The map 
$$\mcl{J}\mapsto \mcl{J}\cap \mcl{A}, \ \ \ \mcl{J}\in \mcl{I}(\mcl{A}\rtimes_{\alpha,r}G),$$
restricts to a continuous map from the space of prime ideals of $\mcl{A}\rtimes_{\alpha,r}G$ to the space of $G$-prime ideals in $\mcl{A}$. Moreover, if the action of $G$ on $\mcl{A}$ is exact and {residually topologically} free this restriction is a homeomorphism.
\end{corollary}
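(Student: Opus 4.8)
The plan is to prove Corollary \ref{cor3} by reducing everything to the bijection established in Corollary \ref{cor1} together with the basic compatibility of the Fell topology with the assignment $\mcl{J}\mapsto \mcl{J}\cap \mcl{A}$. First I would verify the unconditional part: that $\varphi\colon \mcl{J}\mapsto \mcl{J}\cap\mcl{A}$ sends prime ideals of $\mcl{A}\rtimes_{\alpha,r}G$ to $G$-prime ideals of $\mcl{A}$ and is continuous. Continuity is essentially formal from the definition of the sub-basis: the preimage of $\{\mcl{K}\in\mcl{I}(\mcl{A})\colon \mcl{I}\not\subseteq\mcl{K}\}$ under $\varphi$ is $\{\mcl{J}\colon \mcl{I}\not\subseteq \mcl{J}\cap\mcl{A}\}$, and I would show this coincides with $\{\mcl{J}\colon {\rm Ideal}[\mcl{I}]\not\subseteq \mcl{J}\}$, which is a sub-basic open set in $\mcl{I}(\mcl{A}\rtimes_{\alpha,r}G)$; here I use that $\mcl{I}\subseteq \mcl{J}\cap\mcl{A}$ iff ${\rm Ideal}[\mcl{I}]\subseteq\mcl{J}$, the latter because $\mcl{J}$ is an ideal of the crossed product containing $\mcl{I}$. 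For the primeness claim I would take a prime ideal $\mcl{P}$ of the crossed product and $G$-invariant ideals $\mcl{J},\mcl{K}$ of $\mcl{A}$ with $\mcl{J}\cap\mcl{K}\subseteq \mcl{P}\cap\mcl{A}$, then lift to ${\rm Ideal}[\mcl{J}]\cap{\rm Ideal}[\mcl{K}]\subseteq\mcl{P}$ and invoke primeness of $\mcl{P}$, pulling the conclusion back down via $\cap\,\mcl{A}$.

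Next, under the hypotheses of exactness and residual topological freeness, I would upgrade this continuous map to a homeomorphism. By Corollary \ref{cor1} the map $\varphi$ is already a lattice isomorphism between \emph{all} ideals of $\mcl{A}\rtimes_{\alpha,r}G$ and \emph{all} $G$-invariant ideals of $\mcl{A}$, with inverse $\mcl{I}\mapsto {\rm Ideal}[\mcl{I}]$. The key point is that a bijection between complete lattices that is moreover an order isomorphism automatically preserves primeness in both directions: an ideal is prime precisely when it is meet-irreducible in the appropriate lattice (with the caveat that on the crossed-product side one uses ordinary primeness and on the $\mcl{A}$ side $G$-primeness, defined via $G$-invariant ideals). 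Thus I would argue that $\mcl{P}$ is prime in $\mcl{A}\rtimes_{\alpha,r}G$ if and only if $\varphi(\mcl{P})$ is $G$-prime in $\mcl{A}$, using the order isomorphism to transport the meet-irreducibility characterisation across, and that consequently $\varphi$ restricts to a bijection on the relevant prime spectra.

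Having both continuity of $\varphi$ and the bijectivity on primes, it remains to show the inverse is continuous, \ie that $\varphi$ is open onto its image. I would show $\varphi^{-1}$ is exactly $\mcl{I}\mapsto {\rm Ideal}[\mcl{I}]$ and that this map is also continuous, by the symmetric computation: the preimage of the sub-basic set $\{\mcl{J}\colon \mcl{L}\not\subseteq \mcl{J}\}$ under ${\rm Ideal}[-]$ is $\{\mcl{I}\colon \mcl{L}\not\subseteq{\rm Ideal}[\mcl{I}]\}$, and I would translate $\mcl{L}\subseteq{\rm Ideal}[\mcl{I}]$ into a condition of the form $\mcl{L}\cap\mcl{A}\subseteq\mcl{I}$ using that ${\rm Ideal}[\mcl{I}]\cap\mcl{A}=\mcl{I}$ from Proposition \ref{appendix3}, exhibiting the preimage as sub-basic open in the Fell topology on $G$-invariant ideals. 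Combining the two continuity statements with the established bijection yields the homeomorphism.

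The main obstacle I anticipate is the careful bookkeeping in matching the two notions of meet-irreducibility across the order isomorphism: one must check that the lattice operations (arbitrary intersections, and the generated-ideal operation playing the role of join) are genuinely preserved by $\varphi$ and ${\rm Ideal}[-]$, so that $G$-primeness on the base — which is defined only with respect to $G$-invariant ideals — transports correctly to honest primeness upstairs. The translation identities $\mcl{I}\subseteq\mcl{J}\cap\mcl{A}\iff{\rm Ideal}[\mcl{I}]\subseteq\mcl{J}$ and ${\rm Ideal}[\mcl{I}]\cap\mcl{A}=\mcl{I}$ are what make both continuity computations and the primeness transport go through uniformly, so I would state and verify these order-theoretic compatibilities first and then let the topological conclusions follow formally.
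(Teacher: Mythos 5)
There is a genuine gap in your unconditional part, at the step you describe as ``then lift to ${\rm Ideal}[\mcl{J}]\cap{\rm Ideal}[\mcl{K}]\subseteq\mcl{P}$ and invoke primeness.'' This lift is not formal, and it is precisely the one detail on which the paper's entire written proof is spent. From $\mcl{J}\cap\mcl{K}\subseteq\mcl{P}\cap\mcl{A}$ you get ${\rm Ideal}[\mcl{J}\cap\mcl{K}]\subseteq\mcl{P}$ for free, but to pass to ${\rm Ideal}[\mcl{J}]\cap{\rm Ideal}[\mcl{K}]\subseteq\mcl{P}$ you need the inclusion ${\rm Ideal}[\mcl{J}]\cap{\rm Ideal}[\mcl{K}]\subseteq{\rm Ideal}[\mcl{J}\cap\mcl{K}]$, and this is \emph{false} for a general inclusion of \cst-algebras $\mcl{A}\subseteq\mcl{B}$: for the diagonal $\CC\oplus\CC\subseteq M_2(\CC)$ with $\mcl{J}=\CC\oplus 0$ and $\mcl{K}=0\oplus\CC$, both ideals generate all of $M_2(\CC)$ while $\mcl{J}\cap\mcl{K}=0$. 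Nor does it follow from the identities of Proposition \ref{appendix3} that you allow yourself (one can only deduce $E_{\mcl{A}}\bigl({\rm Ideal}[\mcl{J}]\cap{\rm Ideal}[\mcl{K}]\bigr)\subseteq\mcl{J}\cap\mcl{K}$, and getting from there back into ${\rm Ideal}[\mcl{J}\cap\mcl{K}]$ is exactly the exactness/intersection-property issue, which is unavailable in the unconditional first statement). The paper's proof supplies the missing argument using the crossed-product structure: ${\rm Ideal}[\mcl{J}]=\mcl{J}\rtimes_{\alpha,r}G$ holds for $G$-invariant ideals without any hypotheses, the intersection of two ideals equals their product, and a density argument reduces to generators, where $(a\delta_t)(b\delta_s)=\alpha_t(\alpha_{t^{-1}}(a)b)\delta_{ts}$ lands in $(\mcl{J}\cap\mcl{K})\rtimes_{\alpha,r}G$ because $\alpha_{t^{-1}}(a)\in\mcl{J}$ by $G$-invariance and $\mcl{J}\mcl{K}=\mcl{J}\cap\mcl{K}$ is $G$-invariant. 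Without this (or an equivalent) computation, your well-definedness claim, and hence the first statement of the corollary, is unproven.

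Your treatment of the second statement, by contrast, is correct and takes a genuinely different route from the paper, which simply cites Echterhoff--Laca and asserts that the proof generalises. Deriving the prime-to-$G$-prime bijection order-theoretically from Corollary \ref{cor1} is clean: $\varphi$ and ${\rm Ideal}[-]$ are mutually inverse and inclusion-preserving, hence preserve meets (which are plain intersections in both lattices), and the prime condition is stated purely in terms of order and meets, so it transports. Your two sub-basis computations then give continuity in both directions; note only that in the inverse direction the implication $\mcl{L}\cap\mcl{A}\subseteq\mcl{I}\Rightarrow\mcl{L}\subseteq{\rm Ideal}[\mcl{I}]$ requires $\mcl{L}={\rm Ideal}[\mcl{L}\cap\mcl{A}]$, which is what Corollary \ref{cor1} provides, not merely the identity ${\rm Ideal}[\mcl{I}]\cap\mcl{A}=\mcl{I}$ that you cite. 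Also observe that the meet-preservation which makes your conditional argument formal is exactly what fails to be formal unconditionally; this is why the gap above cannot be patched by lattice theory alone and the Fell-bundle computation is unavoidable.
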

\begin{proof}

The second statement is contained in \cite{EchLac} for ordinary crossed products and it is evident that the proof generalizes. The same hold for the fact that the map $\mcl{J}\mapsto \mcl{J}\cap \mcl{A}$ is continuous. We conclude that the restriction is continuous (provided it is well defined).

The fact that it is well defined follows from: The ideal $\mcl{J}\cap \mcl{A}$ is $G$-prime for any prime ideal $\mcl{J}$ in $\mcl{A}\rtimes_{\alpha,r} G$. The proof is contained in \cite{EchLac} omitting one detail. If $\mcl{I}$ and $\mcl{K}$ are $G$-invariant ideals in $\mcl{A}$ then $(\mcl{I}\rtimes_{\alpha,r} G)\cap (\mcl{K}\rtimes_{\alpha,r}G) \subseteq (\mcl{I}\cap \mcl{K})\rtimes_{\alpha,r}G$. To see this we use that the intersection of two closed two-sided ideals equals their product and an approximation argument. It is then sufficient to show that if $a\delta_t\in \mcl{I}\rtimes_{\alpha,r} G$ and $b\delta_t\in \mcl{K}\rtimes_{\alpha,r} G$ then $(a\delta_t)(b\delta_s)\in  \mcl{(IK)}\rtimes_{\alpha,r} G$. But this is evident from  $(a\delta_t)(b\delta_s)=\alpha_t(\alpha_{t^{-1}}(a)b)\delta_{ts}$.
\end{proof}

\begin{remark} If a partial action of a discrete group $G$ on a \cst-algebra $\mcl{A}$ is exact then $\bigcap_i (\mcl{I}_i\rtimes_{\alpha,r} G)= (\bigcap_i\mcl{I}_i)\rtimes_{\alpha,r} G$ for any family $(\mcl{I}_i)$ of $G$-invariant ideals, \cf\cite{Sie2}.
\end{remark}

Let $x$ be a fixed element in ${\rm Prim}\,\mcl{A}$. Using the partial action $(\theta_t)_{t\in G}$ of $G$ on ${\rm Prim}\,\mcl{A}$ define $Gx:=\bigcup_{t\in G \textrm{ for which } x \in {\rm supp}\, \mcl{D}_{t^{-1}}}\{\theta_{t} (x)\}$.
The then {\em quasi-orbit space} $\Oo({\rm Prim}\,\mcl{A})$ of ${\rm Prim}\,\mcl{A}$ is defined as the quotient space ${\rm Prim}\,\mcl{A}/\sim$ by the equivalence relation $$x\sim y\Leftrightarrow \overline{G x}=\overline{G y}.$$
Recall that for $\mcl{A}=\czero(X)$ the partial action $\alpha$ is induced by a partial action $\theta$ of $G$ on $X$, i.e., a collection of open sets $(U_t)_{t\in G}$ and a collection $(\theta_t)_{t\in G}$ of homeomorphisms $\newfunk{\theta_t}{U_{t^{-1}}}{U_t}$ such that $U_e=X$ and $\theta_{st}$ extends  $\theta_{s}\circ\theta_{t}$, \cf{}\cite{McClan, Exe, QuiRae} and Appendix \ref{appendixA2}. Then $\alpha$ is given by $\alpha_t(f)(x) := f(\theta_{t^{-1}}(x))$, $f \in \czero(U_{t^{-1}})$. So, here the ideals are $\mcl{D}_t = \czero(U_t)$. As the canonical homeomorphism from  ${\rm Prim}\,\mcl{A}$ to $X$ is $G$-equivariant we identify the actions on ${\rm Prim}\,\mcl{A}$ and $X$. The next corollary and is proof is a generalisation of \cite[Lemma 2.5 and Corollary 2.6]{EchLac}.

\begin{corollary}
Let $(\mcl{A},G,\alpha)$ be a partial dynamical system with $\mcl{A}$ abelian and separable. Suppose that the action of $G$ on $\mcl{A}$ is exact and {residually topologically} free. Then the space of prime ideals of $\mcl{A}\rtimes_{\alpha,r}G$ is homeomorphic to the quasi-orbit space of ${\rm Prim}\,\mcl{A}$.
\end{corollary}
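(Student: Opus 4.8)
The plan is to factor the desired homeomorphism through the space of $G$-prime ideals of $\mcl{A}$. By Corollary \ref{cor3}, exactness together with residual topological freeness makes the map $\mcl{J}\mapsto\mcl{J}\cap\mcl{A}$ a homeomorphism from the space of prime ideals of $\mcl{A}\rtimes_{\alpha,r}G$ onto the space of $G$-prime ideals of $\mcl{A}$. Hence it suffices to produce a homeomorphism between the $G$-prime ideal space of $\mcl{A}=\czero(X)$, where $X={\rm Prim}\,\mcl{A}$ is second countable by separability, and the quasi-orbit space $\Oo(X)$.

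Next I would set up the bijection using the commutative dictionary. A $G$-invariant ideal of $\czero(X)$ is of the form $\czero(U)$ for a $G$-invariant open set $U$, so the closed $G$-invariant subsets $Y=X\setminus U$ parametrize $G$-invariant ideals; unwinding Definition \ref{def2.1} and the $G$-primeness condition, $\czero(U)$ is $G$-prime precisely when $Y$ is \emph{$G$-irreducible}, i.e.\ whenever $Y\subseteq A\cup B$ for closed $G$-invariant sets $A,B$ one has $Y\subseteq A$ or $Y\subseteq B$. For $x\in X$ the set $\overline{Gx}$ is closed, $G$-invariant and $G$-irreducible: if $\overline{Gx}\subseteq A\cup B$ then $x$ lies in one of them, say $A$, and $G$-invariance of $A$ forces $\overline{Gx}\subseteq A$. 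I therefore define $\Phi([x])=\czero(X\setminus\overline{Gx})$. This is well defined and injective, since $x\sim y$ means exactly $\overline{Gx}=\overline{Gy}$, and $\overline{Gx}$ is recovered from the ideal as the complement of its support.

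The crux is surjectivity, and this is where separability enters. Given a $G$-prime ideal, let $Y$ be the corresponding $G$-irreducible closed $G$-invariant set; I must find $x$ with $\overline{Gx}=Y$. Fixing a countable basis $(V_n)$ for $X$, I form for each $n$ with $V_n\cap Y\neq\emptyset$ the orbit $O_n=Y\cap\bigcup_{t}\theta_t\big((V_n\cap Y)\cap U_{t^{-1}}\big)$, which is a nonempty $G$-invariant open subset of $Y$ (its $G$-invariance being exactly where the partial-action composition rule is used). Writing $Y=\overline{O_n}\cup(Y\setminus O_n)$ as a union of two closed $G$-invariant sets and invoking $G$-irreducibility shows $\overline{O_n}=Y$, so each such $O_n$ is dense in $Y$. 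Since $Y$ is locally compact Hausdorff, hence a Baire space, the countable intersection $\bigcap_n O_n$ is dense and in particular nonempty; any $x$ in it has $Gx$ meeting every basic set that meets $Y$, whence $\overline{Gx}=Y=\Phi([x])$. I expect this Baire-category step to be the main obstacle, as it is the only place the countable-basis hypothesis is consumed and it must be phrased carefully for partial orbits, checking in particular that $x\in O_n$ yields a genuine orbit point $\theta_{t^{-1}}(x)\in V_n\cap Y$.

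Finally I would verify that $\Phi$ is a homeomorphism for the Fell topology on the $G$-prime ideals and the quotient topology on $\Oo(X)$. This reduces to checking that the composite $X\to\Oo(X)\to\{G\text{-prime ideals}\}$ is continuous and open, which follows as in \cite[Lemma 2.5 and Corollary 2.6]{EchLac} once the inclusion-reversing correspondence between closed $G$-invariant subsets of $X$ and $G$-invariant ideals of $\czero(X)$ is identified with a homeomorphism of the relevant ideal spaces. This last part is routine, so I would treat it briefly and concentrate the work on the surjectivity argument above.
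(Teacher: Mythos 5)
Your proposal is correct, and its skeleton---factor through the $G$-prime ideal space via Corollary \ref{cor3}, then match $G$-prime ideals of $\czero(X)$ with quasi-orbits---is the same as the paper's; the genuine difference is in the surjectivity step, which is the crux in both treatments. The paper passes to the quasi-orbit space $T=\Oo({\rm Prim}\,\mcl{A})$: it asserts that $T$ is totally Baire and second countable, and invokes Green's lemma that in such a space a nonempty closed set is the closure of a single point if and only if it is not the union of two proper closed subsets; $G$-primeness then rules out such a decomposition of the image of $V$ in $T$. You instead run the Baire argument upstairs in $X$: for each basic open set $V_n$ meeting the $G$-irreducible closed set $Y$ you form the saturation $O_n$, check that it is open and $G$-invariant (using the composition rule and the domain identity $\theta_{t^{-1}}(U_t\cap U_{s^{-1}})=U_{t^{-1}}\cap U_{(st)^{-1}}$), deduce density in $Y$ from irreducibility, and pick a point of $\bigcap_n O_n$, nonempty because $Y$ is locally compact Hausdorff and hence Baire. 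The two arguments are cousins---both are Baire-category arguments consuming separability---but yours is more self-contained: it avoids citing Green and, more importantly, avoids the paper's unproved assertion that the quasi-orbit space of a \emph{partial} action is totally Baire, which itself would require checking that the quotient map is open (essentially your observation that saturations of open sets are open). The price is the partial-action bookkeeping (invariance of $O_n$, of its complement, and of closures, and that $x\in O_n$ yields a genuine orbit point $\theta_{t^{-1}}(x)\in V_n\cap Y$), all of which you flag and all of which does go through. Your brief treatment of the topological part---continuity plus openness of the induced map on the quotient, following \cite{EchLac}---is no less detailed than the paper's own proof, which also leaves that verification to the cited reference.
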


\begin{proof} We show that the following map is a homeomorphism:
$$[x]\mapsto \mcl{I}_x\rtimes_{\alpha,r} G, \ \ \ \mcl{I}_x:=\bigcap_{\mcl{I}\in G x}\mcl{I}, \ \ \  [x]\in \Oo({\rm Prim}\,\mcl{A}).$$

Well defined: Fix any $[x]\in \bigcap_{\mcl{I}\in G x}\mcl{I}$. It is evident that $\mcl{I}_x$ is a closed two-sided ideal in $\mcl{A}$. Let $\pi: \mcl{A}\to B(H)$ be the irreducible representation corresponding to $x$, i.e.\ $x=\ker\pi$. Since $\mcl{A}=\czero(X)$ for some locally compact Hausdorff space \cite[II.6.2.9]{Bla} ensures that $\pi(f)=f(y)$ for some $y\in X$, so $x=\czero(X\setminus\{y\})$. Hence 
$$\theta_t(x)=\ker(\pi\circ\alpha_{t^{-1}})=\{f:\alpha_{t^{-1}}(f)(y)=0\}=\czero(X\setminus\{\theta_t(y)\}).$$
By continuity we obtain that $\mcl{I}_x=\czero(X\setminus\overline{ Gy})$. This ideal is $G$-invariant since $\overline{ Gy}$ is $G$-invariant (and the complement of a $G$-invariant set is also $G$-invariant). Let us verify $\overline{Gy}$ is $G$-invariant for completeness. Fix any $s\in G$ and any $\theta_t(y) \in Gy\cap U_{s^{-1}}$, where $\mcl{D}_s = \czero(U_s)$. Since $\theta_t(y) \in U_t\cap U_{s^{-1}}$ \cite[Lemma 1.2]{QuiRae} ensures that $y\in \theta_{t^{-1}}(U_t\cap U_{s^{-1}})=U_{t^{-1}}\cap U_{{(st)}^{-1}}$. In particular $\theta_s(\theta_t(y))=\theta_{st}(y)$. We conclude $\theta_s(Gy\cap U_{s^{-1}})\subseteq Gy$ making $Gy$ invariant. The fact that $\overline{Gy}$ is invariant is a simple limit argument used on $\overline{Gy}\cap U_{s^{-1}} \subseteq \overline{Gy\cap U_{s^{-1}}}$. Let us verify that $\mcl{I}_x$ is $G$-prime. Suppose that $U,V$ are closed $G$-invariant subsets of $X$. If $\czero(X\setminus U)\cap \czero(X\setminus V)\subseteq \mcl{I}_x$ it follows that $y\in U\cup V$, hence either $\czero(X\setminus U)\subseteq \mcl{I}_x$ or $\czero(X\setminus V)\subseteq \mcl{I}_x$. It now follows from Corollary \ref{cor3} that $\mcl{I}_x\rtimes_{\alpha,r} G$ is prime.

Surjectivity (\cite[Lemma 2.5]{EchLac}): Let $\mcl{J}$ be any prime ideal in $\mcl{A}\rtimes_{\alpha,r} G$. By Corollary \ref{cor3} the ideal $\mcl{I}=\mcl{J}\cap \mcl{A}$ is $G$-prime. As $\mcl{I}$ is $G$-invariant $\mcl{I}=\czero(X\setminus V)$ for some nonempty closed $G$-invariant set $V$ in $X$. Let $\newfunk{\rho}{{\rm Prim}\,\mcl{A}}{T}$ denote the quotient map into the quasi-orbit space $T:=\Oo({\rm Prim}\,\mcl{A})$. To show that $V=\overline{Gx}$ for some $x\in X$ we verify that $F:=\rho(V)$ is the closure of a single point in $T$. Notice that $T$ is totally Baire (every intersection of an open and a closed subset is a Baire space) and second countable. For such a space a non-empty closed subset $F$ is the closure of a simple point if and only if $F$ is not a union of two proper closed subsets, \cf{}\cite[Lemma p.\ 222]{Gre}. However such two proper closed subsets give raise to two closed $G$-invariant subsets $U_1, U_2\subsetneq V$ with $U_1\cup U_2=V$ and hence two $G$-invariant ideals $\mcl{I}_1, \mcl{I}_1 \supsetneq \mcl{I}$ with $\mcl{I}_1\cap \mcl{I}_2=\mcl{I}$. This contradicts $G$-primeness of $\mcl{I}$.

Injectivity: Fix any two prime ideals  $\mcl{I}_x\rtimes_{r,\alpha} G$ and  $\mcl{I}_y\rtimes_{r,\alpha} G$ in  $\mcl{A}\rtimes_{r,\alpha} G$ such that $\mcl{I}_x\rtimes_{r,\alpha} G = \mcl{I}_y\rtimes_{r,\alpha} G$. By Proposition \ref{appendix3} we obtain $\czero(X\setminus\overline{ Gx})=\mcl{I}_x=\mcl{I}_y=\czero(X\setminus\overline{ Gy})$. We conclude that $x\sim y$.

\end{proof}

\begin{corollary}[Lebedev \cite{Leb}]
Let $(\mcl{A},G,\alpha)$ be a partial dynamical system. Suppose that the action is minimal (i.e{,} $\mcl{A}$ does not contain any non-trivial $G$-invariant ideals) and topologically free. Then the crossed product $\mcl{A}\rtimes_{\alpha,r} G$ is simple.
\end{corollary}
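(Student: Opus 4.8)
The plan is to obtain simplicity as an immediate consequence of Corollary \ref{cor1}, after checking that minimality and topological freeness together supply exactly the two ingredients required there, namely exactness and residual topological freeness. Recall that simplicity of $\mcl{A}\rtimes_{\alpha,r}G$ means it has no ideals besides $0$ and itself, and that under the correspondence of Corollary \ref{cor1} these two ideals match the $G$-invariant ideals $0$ and $\mcl{A}$ of $\mcl{A}$. Since minimality asserts that $0$ and $\mcl{A}$ are the \emph{only} $G$-invariant ideals, simplicity will follow the instant the correspondence is in force.

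First I would observe that exactness (Definition \ref{def2.1}(ii)) is automatic under minimality. That definition imposes a condition only for each $G$-invariant ideal $\mcl{I}$; when the sole such ideals are $\mcl{I}=0$ and $\mcl{I}=\mcl{A}$, the two sequences to be checked degenerate: for $\mcl{I}=0$ one has $\mcl{I}\rtimes_{\alpha,r}G=0$ and $\mcl{A/I}=\mcl{A}$, so the sequence reduces to the identity map on $\mcl{A}\rtimes_{\alpha,r}G$, while for $\mcl{I}=\mcl{A}$ one has $\mcl{A/I}=0$ and again only the identity map survives. Both are trivially exact. The second ingredient is residual topological freeness: the induced action on $\mcl{A/I}$ must be topologically free for every $G$-invariant $\mcl{I}$. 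For $\mcl{I}=0$ this is just the action on $\mcl{A}$ itself, which is topologically free by hypothesis, and for $\mcl{I}=\mcl{A}$ the quotient is the zero algebra, for which the condition is vacuous. This is the one genuine use of minimality apart from the final counting step: it collapses the list of $G$-invariant ideals so that plain topological freeness of $\alpha$ upgrades to its residual version.

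With exactness and residual topological freeness in hand, Corollary \ref{cor1} yields a bijection between the ideals of $\mcl{A}\rtimes_{\alpha,r}G$ and the $G$-invariant ideals of $\mcl{A}$. By minimality the latter collection is $\{0,\mcl{A}\}$, so the former is $\{0,\,\mcl{A}\rtimes_{\alpha,r}G\}$, which is precisely the assertion that $\mcl{A}\rtimes_{\alpha,r}G$ is simple. I do not expect a serious obstacle here; the only point demanding care is recognising that topological freeness is hypothesised for $\alpha$ alone, and that the argument really rests on promoting it to the residual form, which minimality makes free of charge. As an alternative route avoiding the full correspondence, one could argue directly: residual topological freeness gives the residual intersection property, and in particular (at $\mcl{I}=0$) the intersection property, so any nonzero ideal $\mcl{J}$ meets $\mcl{A}$ in a nonzero $G$-invariant ideal, which minimality forces to equal $\mcl{A}$; since an approximate unit for $\mcl{A}$ is a left approximate unit for the whole crossed product, $\mcl{A}\subseteq\mcl{J}$ then gives $\mcl{J}=\mcl{A}\rtimes_{\alpha,r}G$.
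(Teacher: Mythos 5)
Your proposal is correct and follows exactly the route the paper intends: under minimality the only $G$-invariant ideals are $0$ and $\mcl{A}$, so exactness holds trivially and topological freeness of $\alpha$ upgrades to residual topological freeness, whence Corollary \ref{cor1} gives the ideal correspondence and minimality forces the crossed product to have only the two trivial ideals. Your alternative ending via the intersection property and an approximate unit of $\mcl{A}$ is also sound, but the main argument is essentially the paper's own.
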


The notation of topological freeness for partial actions is well known. We recall it in Appendix \ref{appendixA2}  and show that the following equivalent conditions hold:
\begin{proposition}
\label{abeliantopfree}
Let $(\mcl{A},G,\alpha)$ be a partial dynamical system with $\mcl{A}$ abelian. Then the following properties are equivalent:
\begin{enumerate}[(i)]
\item $\alpha$ is topologically free
\item $\|\alpha_t|_{\mcl{K}}-id|_\mcl{K}\|=2$ for every $\alpha_t$-invariant ideal $\mcl{K}$ in $D_{t^{-1}}$ (and $t\neq e$)
\item $\inf\{\|x\alpha_{t}(x)\|: x\in \mcl{K}_+, \|x\|=1\}=0$ for every nonzero ideal $\mcl{K}$ in $\mcl{D}_{t^{-1}}$ (and $t\neq e$)
\item $\inf\{\|{x}( {a}\delta_t){x}\|: x\in \mcl{B}_+, \|x\|=1\}=0$ for every ${a}\in{\mcl{D}_t}$, and every nonzero hereditary \cst-algebra $\mcl{B}$ in $\mcl{A}$ (and $t\neq e$)\\
\end{enumerate}
\end{proposition}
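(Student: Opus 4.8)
The plan is to pass to the commutative picture and reduce all four conditions to one statement about the partial homeomorphisms $\theta_t$. Writing $\mcl{A}=\czero(X)$, the action $\alpha$ is induced by a partial action $\theta$ with open domains $U_t$, so that $\mcl{D}_t=\czero(U_t)$ and $\alpha_t(f)=f\circ\theta_{t^{-1}}$, as recalled in Appendix \ref{appendixA2}. For $t\neq e$ set $F_t:=\{x:\theta_t(x)=x\}$; since $\theta_t(x)=x$ forces $x\in U_t\cap U_{t^{-1}}$, this set is relatively closed in the open set $U_t\cap U_{t^{-1}}$, so its closure in $X$ meets $U_t\cap U_{t^{-1}}$ exactly in $F_t$ and otherwise lies in the nowhere-dense boundary of $U_t\cap U_{t^{-1}}$. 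Hence if $F_t$ has empty interior then $\overline{F_t}$ is nowhere dense, and as a finite union of nowhere-dense closed sets is again nowhere dense, I would first record that topological freeness (i) is equivalent to the single-index statement that $F_t$ has empty interior for every $t\neq e$. I then show each of (ii), (iii), (iv) equivalent to this, using it as the hub.

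For the implications out of (i) the common tool is a bump function whose support is pushed off itself. Because each of (ii)--(iv) is governed by the behaviour of $\theta_t$ on $U_t$, I would work with the open set $W\cap U_t$ (for (iii), with $W=\mathrm{supp}\,\mcl{K}$) or $B\cap U_t$ (for (iv), with $\mcl{B}=\czero(B)$): if it is empty the relevant product vanishes identically for every $x$, so the infimum is trivially $0$; otherwise, since $F_t$ has empty interior, it contains a point $z_0$ with $\theta_{t^{-1}}(z_0)\neq z_0$, and Hausdorffness together with continuity yields an open $V\ni z_0$ inside the prescribed support with $\theta_{t^{-1}}(V)\cap V=\emptyset$. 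A positive norm-one $x$ supported in $V$ then satisfies $x\cdot(x\circ\theta_{t^{-1}})\equiv 0$, giving $\|x\alpha_t(x)\|=0$ for (iii); after computing $x(a\delta_t)x=\alpha_t\big(\alpha_{t^{-1}}(xa)\,x\big)\delta_t$ from the multiplication rule and hence $\|x(a\delta_t)x\|=\sup_z|a(z)|\,|x(z)|\,|x(\theta_{t^{-1}}(z))|$, the same $x$ also kills (iv). For (ii) I would prove the sharper dichotomy that $\|\alpha_t|_{\mcl{K}}-\mathrm{id}|_{\mcl{K}}\|$ equals $0$ if $\theta_t$ is the identity on $W:=\mathrm{supp}\,\mcl{K}$ and equals $2$ otherwise: in the latter case, choosing $x_0\in W$ with $\theta_t(x_0)\neq x_0$ and $x_1:=\theta_t(x_0)\in W$ (using $\alpha_t$-invariance $\theta_t(W)\subseteq W$), a Urysohn function $f$ with $f(x_0)=1$ and $f(x_1)=-1$ gives $(\alpha_t(f)-f)(x_1)=2$.

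For the converses I argue contrapositively. If $F_t$ has nonempty interior $V_0$ for some $t\neq e$, then $\theta_t=\mathrm{id}$ on $V_0$, so $\mcl{K}=\czero(V_0)$ is $\alpha_t$-invariant with difference norm $0$, violating (ii); for any norm-one $x$ supported in $V_0$ one has $x(y)x(\theta_{t^{-1}}(y))=x(y)^2$, whence $\|x\alpha_t(x)\|=1$, violating (iii) for $\mcl{K}=\czero(V_0)$; and choosing $a\in\mcl{D}_t$ equal to $1$ on a nonempty open $V_1\subseteq V_0$ together with $\mcl{B}=\czero(V_1)$ gives $\|x(a\delta_t)x\|=\sup_z x(z)^2=1$ for every norm-one $x\in\mcl{B}_+$, so the infimum is bounded below by $1$ and (iv) fails.

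The step I expect to be the main obstacle is condition (iv), for two reasons. First, one must execute the Fell-bundle multiplication carefully to reach the clean formula $\|x(a\delta_t)x\|=\sup_z|a(z)|\,|x(z)|\,|x(\theta_{t^{-1}}(z))|$, tracking where $\theta_{t^{-1}}$ is defined and where each factor vanishes. Second, and more substantively, the hereditary subalgebra $\mcl{B}=\czero(B)$ is an arbitrary ideal of $\mcl{A}$ and need not lie in $\mcl{D}_{t^{-1}}$; one must therefore split off the case $B\cap U_t=\emptyset$, apply the bump-function argument on $B\cap U_t$, and check that the extra factor $a$ never obstructs the vanishing in the forward direction while it can be forced close to $1$ in the converse.
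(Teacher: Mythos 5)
Your proposal is correct, and every step checks out: the reduction of (i) to the single\nobreakdash-index condition via nowhere density of $\overline{F_t}$, the Fell-bundle computation $x(a\delta_t)x=\alpha_t(\alpha_{t^{-1}}(xa)x)\delta_t$ and the resulting norm formula, the Hausdorff-separation bump functions, and the $0$-or-$2$ dichotomy for (ii) are all sound. The toolkit is the same as the paper's (Proposition \ref{abeliantopfree2}), but your logical organization is genuinely different. The paper also passes to the condition $(i')$ (citing \cite{ExeLacQui} rather than proving it, as you do), but then introduces auxiliary weakenings $(ii')$ ($\|\alpha_t|_{\mcl{K}}-id|_{\mcl{K}}\|\neq 0$) and $(iv')$ ((iv) with ideals in place of hereditary subalgebras), and runs a cycle $(i')\Rightarrow(iii)\Rightarrow(ii')\Rightarrow(i')$ together with $(ii')\Leftrightarrow(ii)$ and $(iii)\Leftrightarrow(iv')\Leftrightarrow(iv)$; in particular (iv) is reached only through (iii), and the harder half $(iv')\Rightarrow(iii)$ requires a Urysohn function $h\equiv 1$ on a compactum and a shrinking of the ideal to strip off the coefficient $b_t$. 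You instead make $(i')$ a hub: each of (ii), (iii), (iv) is deduced directly from $(i')$, and each negation from a single witness $V_0=\mathrm{int}\,F_t$, so the coefficient $a$ in (iv) is killed outright in the forward direction and normalized to $1$ on $V_1$ in the converse, bypassing the paper's two-step detour. Your arrangement buys uniformity and avoids the most delicate step of the paper's proof; the paper's arrangement buys the intermediate statements themselves --- notably that the a priori weaker condition $(ii')$ already implies topological freeness, and the purely $C^*$-algebraic equivalence $(iii)\Leftrightarrow(iv')$, which is what connects the proposition to the Kishimoto--Olesen--Pedersen proper outerness criterion discussed in the introduction.
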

In \cite{Ell}, Elliott defines an automorphism $\alpha$ of a \cst-algebra $\mcl{A}$ to be \emph{properly outer} if for any nonzero $\alpha$-invariant ideal $\mcl{I}$ of $\mcl{A}$ and any inner automorphism $\beta$ of $\mcl{I}$, $$\|\alpha|_{\mcl{I}}-\beta\|=2.$$
Proper outerness for dynamical systems has been vastly studied in \cite{Kis, OlePed3, ArcSpi, Sie2, Sie} among others. We will address in one of our upcoming works how one might generalise the notion proper outerness beyond ordinary crossed products.

In the rest of the section, we generalize \cite{ExeLacQui}, Theorem 2.6, to the non-abelian case using condition (iv) of Proposition \ref{abeliantopfree}. We first need the following lemma.

\begin{lemma}
\label{exellemma}
Let $(\mcl{A},G,\alpha)$ be a partial dynamical system. Suppose that for every $t\neq e$, every ${a}\in{\mcl{D}_t}$, and every nonzero hereditary \cst-algebra $\mcl{B}$ in $\mcl{A}$
$$\inf\{\|{x}( {a}\delta_t){x}\|: x\in \mcl{B}_+, \|x\|=1\}=0.$$
Then for every $b\in (\mcl{A}\rtimes_{\alpha,r} G)_+$ and every $\varepsilon>0$ there exist a positive contraction $x\in \mcl{A}$ satisfying
$$\|xE_{\mcl{A}}(b)x-xbx\|< \varepsilon, \ \ \ \|xE_{\mcl{A}}(b)x\|> \|E_{\mcl{A}}(b)\|-\varepsilon.$$
\end{lemma}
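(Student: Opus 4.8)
The plan is to reduce to a finite sum, localise the conditional expectation $E_{\mcl{A}}(b)$ near the top of its spectrum, and then kill the off-diagonal terms one at a time inside a shrinking chain of hereditary subalgebras. Fix $\varepsilon>0$. Using that the finite sums $\mcl{L}$ are dense and that $E_{\mcl{A}}$ is contractive, I would first choose $b'=\sum_{t\in F}a_t\delta_t\in\mcl{L}$ with $\|b-b'\|$ as small as desired, so that $a_e=E_{\mcl{A}}(b')$ approximates $h:=E_{\mcl{A}}(b)$. Since every candidate $x$ is a contraction, it then suffices to produce a positive norm-one $x\in\mcl{A}$ with $\|x(a_t\delta_t)x\|$ small for each $t\in F\setminus\{e\}$ and $\|xhx\|>\|h\|-\varepsilon$; the two estimates for $b$ follow by the triangle inequality together with $\|a_t\delta_t\|=\|a_t\|$. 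If $h=0$ then $b=0$ by faithfulness of $E_{\mcl{A}}$ on positive elements and there is nothing to prove, so I assume $M:=\|h\|>0$.

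For the diagonal term I would set $p=f(h)$ and $q=g(h)$, where $f,g\colon[0,M]\to[0,1]$ are continuous, $f(M)=1$, $f$ vanishes on $[0,M-\eta]$, and $g\equiv 1$ on the support of $f$ while $g$ is supported in $[M-2\eta,M]$, for a small $\eta<\varepsilon/2$. Then $\mcl{B}_0:=\overline{p\mcl{A}p}$ is a nonzero hereditary subalgebra (as $M\in\mathrm{sp}(h)$), and from $qp=pq=p$ one checks that $q$ is a local unit for $\mcl{B}_0$ with $\|hq-Mq\|\le 2\eta$. Consequently every positive norm-one $x\in\mcl{B}_0$ satisfies $qx=xq=x$, whence $xhx=x(qhq)x$ and $\|xhx-Mx^2\|\le 2\eta$, so that $\|xhx\|\ge M-2\eta>M-\varepsilon$ \emph{automatically}. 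Thus the second inequality holds for any norm-one positive element of $\mcl{B}_0$, and it remains only to secure the off-diagonal bounds inside $\mcl{B}_0$.

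The heart of the argument is an upgrade of the hypothesis: for a nonzero hereditary subalgebra $\mcl{C}$ and fixed $t\neq e$, $a\in\mcl{D}_t$, there is a nonzero hereditary subalgebra $\mcl{C}'\subseteq\mcl{C}$ with $\|y(a\delta_t)y\|<\varepsilon'$ for \emph{every} positive contraction $y\in\mcl{C}'$. The hypothesis only furnishes a \emph{single} positive norm-one $u\in\mcl{C}$ with $\|u(a\delta_t)u\|$ small, and transferring this to an entire subalgebra is the main obstacle. The trick I would use is to localise near the \emph{top} of $u$: take $\mcl{C}'=\overline{(u-1+\tau)_+\,\mcl{C}\,(u-1+\tau)_+}$ and let $e_\tau$ be a continuous function of $u$ equal to $1$ on the support of $(u-1+\tau)_+$ and supported in $[1-2\tau,1]$. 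Then $e_\tau$ is a local unit for $\mcl{C}'$ with $\|ue_\tau-e_\tau\|\le 2\tau$; since $e_\tau$ commutes with $u$, inserting copies of $u$ gives $\|e_\tau(a\delta_t)e_\tau\|\le\|u(a\delta_t)u\|+4\tau\|a\|$, and for a positive contraction $y\in\mcl{C}'$ the relation $y=e_\tau y e_\tau$ forces $\|y(a\delta_t)y\|\le\|e_\tau(a\delta_t)e_\tau\|$. Choosing $\tau$ and the precision in the hypothesis small enough yields the claim.

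Finally I would enumerate $F\setminus\{e\}=\{t_1,\dots,t_n\}$ and apply this upgrade successively to build nonzero hereditary subalgebras $\mcl{B}_0\supseteq\mcl{B}_1\supseteq\cdots\supseteq\mcl{B}_n$, where $\mcl{B}_i$ handles $t_i$ for all of its positive contractions. Any positive norm-one $x\in\mcl{B}_n$ then satisfies $\|x(a_{t_i}\delta_{t_i})x\|<\varepsilon'$ for all $i$ (since $\mcl{B}_n\subseteq\mcl{B}_i$) together with $\|xhx\|>M-\varepsilon$ (since $\mcl{B}_n\subseteq\mcl{B}_0$). Collecting the resulting errors — a contribution of order $\|b-b'\|$ from the approximation and a contribution of order $|F|\varepsilon'$ from the off-diagonal terms — and choosing all tolerances sufficiently small gives both required inequalities. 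I expect the delicate point to be exactly the top-localisation in the third paragraph, since a naive choice of $\mcl{C}'$ yields only a non-uniform bound that fails to carry over to every element of the subalgebra.
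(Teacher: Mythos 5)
Your proposal is correct and takes essentially the same approach as the paper's proof: reduce to a finite sum $\sum_{t\in F}a_t\delta_t$, then work through the off-diagonal group elements one at a time, at each stage localising at the top of the spectrum of the norm-one witness supplied by the hypothesis to produce a smaller nonzero hereditary subalgebra in which a local unit propagates the smallness estimate to every positive contraction, and finally take a norm-one positive element of the last subalgebra in the chain. The differences are only organisational: you handle the diagonal term first (via the corner $\overline{f(h)\mathcal{A}f(h)}$ and the local unit $q$) and isolate the propagation step as an explicit ``upgrade'' statement, whereas the paper flattens each witness $x_i'$ by functional calculus and uses the corners $\{x\colon xx_i=x_ix=x\}$, obtaining the diagonal estimate at the very end from $f(t)^2t\geq(1-\varepsilon)f(t)^2$.
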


 \begin{proof}
Fix $b\in (\mcl{A}\rtimes_{\alpha,r} G)_+$ and $\varepsilon>0$. 

{(i) We may assume $b\in \mcl{L}$:} Since $\mcl{L}$ is dense in $\mcl{A}\rtimes_{\alpha,r} G$ there exist ${c}\in (\mcl{A}\rtimes_{\alpha,r} G)_+\cap \mcl{L}$ such that $\|{c}-b\|<\varepsilon$. Find a positive contraction $x\in \mcl{A}$ satisfying
$\|xE_{\mcl{A}}({c})x-x{c}x\|< \varepsilon$, and $\|xE_{\mcl{A}}({c})x\|> \|E_{\mcl{A}}({c})\|-\varepsilon$. Then
\begin{align*}
\|xE_{\mcl{A}}(b)x-xbx\|&\leq \|xE_{\mcl{A}}(b)x-xE_{\mcl{A}}({c})x\|+\|xE_{\mcl{A}}({c})x-x{c}x\|\\
&\ \ \ +\|x{c}x-xbx\| < 3\varepsilon,\\
\|E_{\mcl{A}}(b)\|
&< \|E_{\mcl{A}}(b-{c})\|+\varepsilon+\|xE_{\mcl{A}}({c}-b)x\|+\|xE_{\mcl{A}}(b)x\|\\
&< \|xE_{\mcl{A}}(b)x\|+3\varepsilon.
\end{align*}

{(ii) We may assume $E_{\mcl{A}}(b)$ has norm one:} If $b=0$ then any positive contraction $x\in\mcl{A}$ works. For $b\neq 0$ define ${c}:=\frac{b}{\|E_{\mcl{A}}(b)\|}$ using $E_{\mcl{A}}$ is faithful. Find a positive contraction $x\in \mcl{A}$ satisfying
$\|xE_{\mcl{A}}({c})x-x{c}x\|< \frac{\varepsilon}{\|E_{\mcl{A}}(b)\|}$, and $\|xE_{\mcl{A}}({c})x\|> \|E_{\mcl{A}}({c})\|-\frac{\varepsilon}{\|E_{\mcl{A}}(b)\|}$. Then
$$\|xE_{\mcl{A}}(b)x-xbx\|< \varepsilon, \ \ \ \|xE_{\mcl{A}}(b)x\|> \|E_{\mcl{A}}(b)\|-\varepsilon.$$

{(iii) It is enough to show that:} For every $\varepsilon>0$, every $b_0\in \mcl{A}_+$, with $\|b_0\|=1$, every finite set $F\subseteq G\setminus\{e\}$, and every sequence of elements $(b_t)_{t\in F}$, with $b_t\in \mcl{D}_{t}$, there exist $x\in \mcl{A}_+$ such that 
$$\|x\|=1, \ \ \ \|xb_0x\|>\|b_0\|-\varepsilon, \ \ \ \|{x}( b_t\delta_t){x}\|<\varepsilon, \ \ \ t\in F.$$
By $(i)-(ii)$ we may assume that $b={b_0}+\sum_{t \in F} b_t\delta_t$ for some finite set $F\subseteq G\setminus\{e\}$, where $b_0=E_{\mcl{A}}(b)$ is positive (since $E_{\mcl{A}}$ is positive) and has norm one.
Using the assumption on $b_0$ and the sequence  $(b_t)_{t\in F}$ choose $x\in \mcl{A}_+$ such that 
$$\|x\|=1, \ \ \ \|xb_0x\|>\|b_0\|-\varepsilon, \ \ \ \|{x}( b_t\delta_t){x}\|<\varepsilon, \ \ \ t\in F.$$
We conclude that
\begin{align*}
\|xE_{\mcl{A}}(b)x\|&>\|E_{\mcl{A}}(b)\|-\varepsilon,\\
\|xE_{\mcl{A}}(b)x-xbx\|&=\|x(b-b_0)x\|\leq \sum_{t\in F}\|{x}( b_t\delta_t){x}\|< |F| \varepsilon.
\end{align*}

{(iv) Finishing the proof:} Fix $\varepsilon>0$, $b_0\in \mcl{A}_+$, with $\|b_0\|=1$, a finite set $F\subseteq G\setminus\{e\}$, and a sequence of elements $(b_t)_{t\in F}$, with $b_t\in \mcl{D}_{t}$. Let $\newfunk{f}{[0,1]}{[0,1]}$ be a continuous increasing function taking the value zero on $[0,1-\varepsilon]$ and one on $[1-\frac{\varepsilon}{2},1]$. Define $x_0:=f(b_0)$ and set
$$\mcl{B}_1:=\{x \in \mcl{A}\colon xx_0=x_0x=x\}.$$
If $\newfunk{g}{[0,1]}{[0,1]}$ is a continuous increasing function equal to zero on $[0,1-\frac{\varepsilon}{2}]$ and one on $[1-\frac{\varepsilon}{4},1]$ then $gf=fg=g$ and $\|g(b_0)\|=1$. We conclude that $\mcl{B}_1$ is nonzero. Using the \cst-norm identify it follows that $L=\{x\in \mcl{A}\colon x^*x\in \mcl{B}_1\}$ is a closed left ideal in $\mcl{A}$. Verifying that $\mcl{B}_1=L\cap L^*$ we obtain that $\mcl{B}_1$ is hereditary. Write $F:=\{t_1,\dots, t_n\}$. Since $b_{t_1}\in{\mcl{D}_{t_1}}$ and $\mcl{B}_1$ is  nonzero hereditary in $\mcl{A}$
$$\inf\{\|{x}( b_{t_1}\delta_{t_1}){x}\|: x\in (\mcl{B}_1)_+, \|x\|=1\}=0.$$
Select $x_1'\in (\mcl{B}_1)_+$ such that $\|x_1'\|=1$ and $\|{x_1'}(b_{t_1}\delta_{t_1}){x_1'}\|<\varepsilon$. Let $\newfunk{h}{[0,1]}{[0,1]}$ be a continuous increasing function equal to the identity on $[0,1-\varepsilon]$ and one on $[1-\frac{\varepsilon}{2},1]$. With $x_1:=h(x_1')$ we have (since $gh=hg=g$) that 
\begin{align*}
&x_1\in (\mcl{B}_1)_+, \ \ \ \|x_1\|=1, \ \ \ \|{x_1}(b_{t_1}\delta_{t_1}){x_1}\|<(2+\|b_{t_1}\|)\varepsilon,\\
&\mcl{B}_2:=\{x \in{\mcl{B}_1}\colon xx_1=x_1x=x\}\ni g(x_1')\neq 0.
\end{align*}
Since $b_{t_2}\in{\mcl{D}_{t_2}}$ and $\mcl{B}_2$ is  nonzero hereditary in $\mcl{A}$
$$\inf\{\|{x}( b_{t_2}\delta_{t_2}){x}\|: x\in (\mcl{B}_2)_+, \|x\|=1\}=0.$$
Repeating the procedure above we obtain a sequence of hereditary \cst-algebras $$\mcl{B}_1\supseteq \mcl{B}_2\supseteq \cdots \supseteq \mcl{B}_n,$$ and positive elements of norm one in them, $x_1$, $x_2$, $\dots$, $x_n$, fulfilling that
$$x_i\in (\mcl{B}_i)_+, \ \ \ \|x_i\|=1, \ \ \ \|{x_i}(b_{t_i}\delta_{t_i}){x_i}\|<(2+\|b_{t_i}\|)\varepsilon.$$
With $x:=x_n$ we have that $xx_i=x_ix=x$ for $0\leq i<n$. It follows that
$$x\in \mcl{A}_+, \ \ \ \|x\|=1, \ \ \ \|{x}(b_{t_i}\delta_{t_i}){x}\|<(2+\|b_{t_i}\|)\varepsilon, \ \ \ i=1,\dots, n.$$
Finally, since $f(t)^2t\geq (1-\varepsilon)f(t)^2$ for every $t\in [0,1]$, we get
$$xx_0b_0x_0x\geq (1-\varepsilon)xx_0x_0x, \ \ \ \|xb_0x\|>\|b_0\|-\varepsilon.$$
 \end{proof}

\begin{theorem}
\label{mainthorem2}
Let $(\mcl{A},G,\alpha)$ be a partial dynamical system. Suppose that for every $t\neq e$, every ${a}\in{\mcl{D}_t}$, and every nonzero hereditary \cst-algebra $\mcl{B}$ in $\mcl{A}$
$$\inf\{\|{x}( {a}\delta_t){x}\|: x\in \mcl{B}_+, \|x\|=1\}=0.$$
Then every nonzero ideal in $\mcl{A}\rtimes_{\alpha, r} G$ has a nonzero intersection with $\mcl{A}$.
\end{theorem}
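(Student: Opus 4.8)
The plan is to reduce the statement to Lemma \ref{exellemma} combined with the faithfulness of the conditional expectation $E_{\mcl{A}}$. Let $\mcl{J}$ be a nonzero ideal in $\mcl{A}\rtimes_{\alpha,r}G$ and choose a nonzero positive element $b\in \mcl{J}_+$. Since $E_{\mcl{A}}$ is faithful on positive elements, $E_{\mcl{A}}(b)\neq 0$, and after rescaling we may assume $\|E_{\mcl{A}}(b)\|=1$. The hypothesis of the theorem is precisely the standing assumption of Lemma \ref{exellemma}, so I am free to invoke it.

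Next, fix $\varepsilon\in(0,\tfrac12)$ and apply Lemma \ref{exellemma} to obtain a positive contraction $x=x_\varepsilon\in \mcl{A}$ with $\|xE_{\mcl{A}}(b)x-xbx\|<\varepsilon$ and $\|xE_{\mcl{A}}(b)x\|>\|E_{\mcl{A}}(b)\|-\varepsilon=1-\varepsilon$. The element $xbx$ lies in $\mcl{J}$, because $\mcl{J}$ is an ideal and $x\in \mcl{A}\subseteq \mcl{A}\rtimes_{\alpha,r}G$, whereas $a_\varepsilon:=xE_{\mcl{A}}(b)x$ lies in $\mcl{A}$. Thus $\mcl{A}$ contains the positive element $a_\varepsilon$ of norm strictly greater than $1-\varepsilon$ that is within $\varepsilon$ of the ideal $\mcl{J}$.

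Finally I would argue by contradiction. Suppose $\mcl{J}\cap \mcl{A}=0$, and let $\newfunk{q}{\mcl{A}\rtimes_{\alpha,r}G}{(\mcl{A}\rtimes_{\alpha,r}G)/\mcl{J}}$ be the quotient map. The restriction $q|_{\mcl{A}}$ has kernel $\mcl{J}\cap \mcl{A}=0$, so it is an injective $^*$-homomorphism and hence isometric; therefore $\|q(a_\varepsilon)\|=\|a_\varepsilon\|>1-\varepsilon$. On the other hand $q(xbx)=0$, so $\|q(a_\varepsilon)\|=\|q(a_\varepsilon-xbx)\|\leq \|a_\varepsilon-xbx\|<\varepsilon$. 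With $\varepsilon<\tfrac12$ this forces $1-\varepsilon<\varepsilon$, a contradiction. Hence $\mcl{J}\cap \mcl{A}\neq 0$, as required. The substantive content has already been absorbed into Lemma \ref{exellemma}, which encodes the local approximation of $xbx$ by $xE_{\mcl{A}}(b)x$ using condition (iv) of Proposition \ref{abeliantopfree}; given that lemma, the only points needing care are the faithfulness of $E_{\mcl{A}}$ (to start from a positive element with nonzero expectation) and the standard fact that an injective $^*$-homomorphism is isometric, so I expect no genuine obstacle beyond invoking the lemma correctly.
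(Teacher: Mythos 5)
Your proposal is correct and is essentially the paper's own proof: the paper simply states that the Exel--Laca--Quigg argument for the abelian case generalises via Lemma \ref{exellemma}, and what you have written out (faithfulness of $E_{\mcl{A}}$ to get a nonzero normalised expectation, the approximation from Lemma \ref{exellemma}, and the fact that an injective $^*$-homomorphism is isometric applied to the quotient map restricted to $\mcl{A}$) is precisely that argument made explicit. No gaps; the contradiction $1-\varepsilon<\varepsilon$ for $\varepsilon<\tfrac12$ closes it correctly.
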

\begin{proof}
The proof by Exel, Laca and Quigg in \cite{ExeLacQui} for partial crossed product with $\mcl{A}$ abelian generalises to the non-abelian case by means of Lemma \ref{exellemma}.
\end{proof}

\begin{corollary}
\label{cor2.10}
Let $(\mcl{A},G,\alpha)$ be a partial dynamical system. Suppose that the action is exact and that for every $t\neq e$, every $G$-invariant ideal $\mcl{I}$ in $\mcl{A}$, every ${a}\in{\mcl{D}_t}/(\mcl{D}_t\cap \mcl{I})$, and every nonzero hereditary \cst-algebra $\mcl{B}$ in $\mcl{A}/\mcl{I}$
$$\inf\{\|{x}( {a}\delta_t){x}\|: x\in \mcl{B}_+, \|x\|=1\}=0.$$
Then there is a one-to-one correspondence between ideals in $\mcl{A}\rtimes_{\alpha,r}G$ and $G$-invariant ideals in $\mcl{A}$.
\end{corollary}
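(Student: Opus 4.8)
The plan is to deduce the correspondence from Theorem \ref{mainthorem}, which asserts that the canonical map $\mcl{J}\mapsto\mcl{J}\cap\mcl{A}$ is a bijection \emph{exactly} when $\alpha$ is exact and has the residual intersection property. Exactness is among the hypotheses, so the whole task reduces to verifying the residual intersection property of Definition \ref{def2.1}(iii): for every $G$-invariant ideal $\mcl{I}$ of $\mcl{A}$, every nonzero ideal of $\mcl{A/I}\rtimes_{\alpha,r} G$ must meet $\mcl{A/I}$ nontrivially.

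First I would fix a $G$-invariant ideal $\mcl{I}$ of $\mcl{A}$ and pass to the induced partial dynamical system on the quotient. Since $\mcl{I}$ is $G$-invariant (\ie $\alpha_t(\mcl{I}\cap\mcl{D}_{t^{-1}})\subseteq\mcl{I}$), each $\alpha_t$ carries $\mcl{I}\cap\mcl{D}_{t^{-1}}$ into $\mcl{I}\cap\mcl{D}_t$ and so descends to a $^*$-isomorphism between the ideals $\mcl{D}_{t^{-1}}/(\mcl{D}_{t^{-1}}\cap\mcl{I})$ and $\mcl{D}_t/(\mcl{D}_t\cap\mcl{I})$ of $\mcl{A/I}$; the compatibility conditions (i)--(iii) of a partial action are inherited. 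Thus $(\mcl{A/I},G,\alpha)$ is again a partial dynamical system, with ideals $\mcl{D}_t/(\mcl{D}_t\cap\mcl{I})$.

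The key step is then to apply Theorem \ref{mainthorem2} to this quotient system. Its hypothesis---that for every $t\neq e$, every $a$ in the ideal $\mcl{D}_t/(\mcl{D}_t\cap\mcl{I})$, and every nonzero hereditary \cst-subalgebra $\mcl{B}$ of $\mcl{A/I}$ one has $\inf\{\|x(a\delta_t)x\|: x\in\mcl{B}_+,\ \|x\|=1\}=0$---is precisely the standing assumption of the corollary, read off at the ideal $\mcl{I}$. Theorem \ref{mainthorem2} then gives that every nonzero ideal of $\mcl{A/I}\rtimes_{\alpha,r} G$ intersects $\mcl{A/I}$ nontrivially, \ie the quotient system has the intersection property. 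As $\mcl{I}$ was an arbitrary $G$-invariant ideal, this is exactly the residual intersection property, and Theorem \ref{mainthorem} finishes the proof.

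The only genuine work lies in the reduction step: one must confirm that the quotient really is a partial dynamical system with the stated ideals, and---more delicately---that the quantity $\|x(a\delta_t)x\|$ computed in $\mcl{A/I}\rtimes_{\alpha,r} G$ for the induced action coincides with the expression in the hypothesis, so that Theorem \ref{mainthorem2} applies verbatim to the quotient. I expect this identification of the quotient crossed product with the crossed product of the induced partial action to be the main (and essentially the only) obstacle; once it is in place, the corollary is an immediate concatenation of Theorems \ref{mainthorem} and \ref{mainthorem2}.
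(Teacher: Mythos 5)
Your proposal is correct and follows exactly the paper's own route: the hypothesis, read at each $G$-invariant ideal $\mcl{I}$, is precisely the hypothesis of Theorem \ref{mainthorem2} for the quotient system, which yields the residual intersection property, and Theorem \ref{mainthorem} then gives the bijection. The reduction details you flag (that the quotient carries an induced partial action and that $\|x(a\delta_t)x\|$ is computed in $\mcl{A/I}\rtimes_{\alpha,r}G$) are left implicit in the paper, so spelling them out is a harmless refinement rather than a divergence.
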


\begin{proof}
Using Theorem \ref{mainthorem2} we obtain that $\alpha$ has the residual intersection property. The desired correspondence follows from Theorem \ref{mainthorem}.
\end{proof}

\section{Pure infiniteness of partial crossed products}\label{preliminaries2}
\label{section3}
We recall the definition of purely infinite \cst-algebras with the ideal property and the notion of paradoxical actions.

Let $\mcl{A}$ be a \cst-algebra, and let $a,b$ be positive elements in $\mcl{A}$. We say that $a$ is \emph{Cuntz below} $b$, denoted $a\precsim b$, if there exists a sequence $(r_n)$ in $\mcl{A}$ such that $r_n^*br_n\to a$.
More generally for $a\in M_n(\mcl{A})_+$ and $b\in M_m(\mcl{A})_+$ we write $a\precsim b$ if there exists a sequence $(r_n)$ in $M_{m,n}(\mcl{A})$ with $r_n^*br_n\to a$. For $a\in M_n(\mcl{A})$ and $b\in M_m(\mcl{A})$ let $a\oplus b$ denote the element $\textrm{diag}(a,b)\in M_{n+m}(\mcl{A})$. A nonzero positive element $a$ in $\mcl{A}$ is \emph{properly infinite} if $a\oplus a\precsim a$.

A \cst-algebra $\mcl{A}$ is \emph{purely infinite} if there are no characters on $\mcl{A}$ and if for every pair of positive elements $a,b$ in $\mcl{A}$ such that $b$ belongs to the ideal in $\mcl{A}$ generated by $a$, one has $b\precsim a$. Equivalently, a \cst-algebra $A$ is purely infinite if every non-zero positive element $a$ in $A$ is properly infinite, \cf\cite[Theorem 4.16]{KirRor}.

\begin{definition}
A \cst-algebra \mcl{A} has \emph{the ideal property} if projections in $\mcl{A}$ separate ideals in $\mcl{A}$, \ie, whenever $\mcl{I,J}$ are ideals in $\mcl{A}$ such that $\mcl{I}\nsubseteq \mcl{J}$, then there is a projection in $\mcl{I}\setminus (\mcl{I\cap  J})$.
\end{definition}

The next result generalises the work in \cite{RorSie} on ordinary crossed products. The proof is included for completeness.

\begin{theorem}
\label{the4.1}	
Let $(\mcl{A},G,\alpha)$ be a partial dynamical system. Suppose that the action is exact and that for every $t\neq e$, every $G$-invariant ideal $\mcl{I}$ in $\mcl{A}$, every ${a}\in{\mcl{D}_t}/(\mcl{D}_t\cap \mcl{I})$, and every nonzero hereditary \cst-algebra $\mcl{B}$ in $\mcl{A}/\mcl{I}$
$$\inf\{\|{x}( {a}\delta_t){x}\|: x\in \mcl{B}_+, \|x\|=1\}=0.$$
Suppose also that $\mcl{A}$ has the ideal property. Then the following statements are equivalent
\begin{enumerate}[(i)]
\item Every nonzero positive element in $\mcl{A}$ is properly infinite in
$\mcl{A}\rtimes_{\alpha,r} G$. 
\item The \cst-algebra $\mcl{A}\rtimes_{\alpha,r} G$ is purely infinite.
\item Every nonzero hereditary sub-\cst-algebra in any quotient of $\mcl{A}\rtimes_{\alpha,r} G$ contains an infinite projection. 
\end{enumerate}
\end{theorem}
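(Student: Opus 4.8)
The plan is to run the cycle $(ii)\Rightarrow(i)\Rightarrow(ii)$ together with $(ii)\Leftrightarrow(iii)$, organising everything around the two structural inputs available to us: the ideal correspondence of Corollary \ref{cor2.10} and the averaging Lemma \ref{exellemma}. The preliminary reduction I would establish first is that the standing hypotheses are \emph{residual} and descend to every quotient. The averaging condition is by design imposed on the induced system on $\mcl{A}/\mcl{I}$ for \emph{every} $G$-invariant ideal $\mcl{I}$, while exactness identifies $(\mcl{A}/\mcl{I})\rtimes_{\alpha,r}G$ with $(\mcl{A}\rtimes_{\alpha,r}G)/(\mcl{I}\rtimes_{\alpha,r}G)$; since Corollary \ref{cor2.10} shows every ideal of $\mcl{A}\rtimes_{\alpha,r}G$ has the form $\mcl{I}\rtimes_{\alpha,r}G$, every quotient of $\mcl{A}\rtimes_{\alpha,r}G$ is again a partial crossed product of the same type with $\mcl{A}/\mcl{I}$ in place of $\mcl{A}$. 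I would also record that the ideal property passes to the crossed product: the correspondence $\mcl{J}\mapsto\mcl{J}\cap\mcl{A}$ is an order isomorphism, so $\mcl{J}_1\nsubseteq\mcl{J}_2$ forces $\mcl{J}_1\cap\mcl{A}\nsubseteq\mcl{J}_2\cap\mcl{A}$, and a projection in $(\mcl{J}_1\cap\mcl{A})\setminus(\mcl{J}_2\cap\mcl{A})$ furnished by the ideal property of $\mcl{A}$ lies in $\mcl{A}\subseteq\mcl{A}\rtimes_{\alpha,r}G$, hence in $\mcl{J}_1\setminus\mcl{J}_2$. Thus $\mcl{A}\rtimes_{\alpha,r}G$ and all its quotients have the ideal property.

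The implication $(ii)\Rightarrow(i)$ is immediate from \cite[Theorem~4.16]{KirRor}: if $\mcl{A}\rtimes_{\alpha,r}G$ is purely infinite then every nonzero positive element is properly infinite, in particular those lying in $\mcl{A}$. The substance is $(i)\Rightarrow(ii)$, which I would prove by verifying, via \cite[Theorem~4.16]{KirRor}, that every nonzero $b\in(\mcl{A}\rtimes_{\alpha,r}G)_+$ is properly infinite. Fix such a $b$ with $\|E_{\mcl{A}}(b)\|=1$ and $\varepsilon\in(0,\tfrac12)$, and apply Lemma \ref{exellemma} to obtain a positive contraction $x\in\mcl{A}$ with $\|xbx-xE_{\mcl{A}}(b)x\|<\varepsilon$ and $\|xE_{\mcl{A}}(b)x\|>1-\varepsilon$. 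Factoring $xbx=(b^{1/2}x)^*(b^{1/2}x)\sim (b^{1/2}x)(b^{1/2}x)^*=b^{1/2}x^2b^{1/2}\le b$ gives $xbx\precsim b$, and R\o rdam's perturbation estimate then produces a nonzero $d:=(xE_{\mcl{A}}(b)x-\varepsilon)_+\in\mcl{A}_+$ with $d\precsim xbx\precsim b$. By hypothesis $(i)$ this $d$ is properly infinite in $\mcl{A}\rtimes_{\alpha,r}G$.

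The main obstacle is precisely the next step: upgrading ``$b$ dominates, in the Cuntz sense, the properly infinite element $d\in\mcl{A}$'' to ``$b$ is itself properly infinite.'' The difficulty is that $E_{\mcl{A}}$ controls only a piece of $b$ of the right \emph{norm}, not the full Cuntz class of $b$, so $b\sim d$ is not available for free. To close the gap I would follow \cite{RorSie}: one arranges $d$ to be \emph{full in the same ideal as} $b$ --- the $G$-invariant ideal of $\mcl{A}$ generated by $E_{\mcl{A}}(b)$ is exactly ${\rm Ideal}[b]\cap\mcl{A}$ by the computation in Theorem \ref{mainthorem}, and the ideal property of $\mcl{A}\rtimes_{\alpha,r}G$ supplies the projections witnessing this fullness --- and then invokes the Kirchberg--R\o rdam comparison machinery of \cite{KirRor} to pass from a full, properly infinite $d\precsim b$ to $b\oplus b\precsim b$. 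Letting $\varepsilon\to0$ and using that proper infiniteness is a Cuntz invariant completes $(i)\Rightarrow(ii)$; I expect the fullness bookkeeping here to be the genuinely technical heart of the argument.

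Finally I would establish $(ii)\Leftrightarrow(iii)$ using the ideal property throughout. For $(ii)\Rightarrow(iii)$: every quotient of a purely infinite algebra is purely infinite, a purely infinite algebra with the ideal property has real rank zero (by the theorem of Pasnicu and R\o rdam), so nonzero hereditary subalgebras of each quotient contain nonzero projections, and every nonzero projection in a purely infinite algebra is infinite (indeed $p(\mcl{A}\rtimes_{\alpha,r}G)p$ is unital and purely infinite); hence each such hereditary subalgebra contains an infinite projection. For $(iii)\Rightarrow(i)$: given nonzero $a\in\mcl{A}_+$, apply $(iii)$ to the hereditary subalgebra $\overline{a(\mcl{A}\rtimes_{\alpha,r}G)a}$ to extract an infinite, hence (via the ideal property and fullness, exactly as in the obstacle above and in \cite{RorSie}) properly infinite projection, and deduce that $a$ is properly infinite in $\mcl{A}\rtimes_{\alpha,r}G$. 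This closes the cycle and shows all three conditions are equivalent, with the indispensable role of the ideal property being that it survives in every quotient, supplying both the separating projections and the appeal to real rank zero.
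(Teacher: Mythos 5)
You have located the crux correctly, but the device you propose for it does not close the gap, and the gap is essential. In $(i)\Rightarrow(ii)$ you need to pass from ``$d\precsim b$ with $d=(xE_{\mcl{A}}(b)x-\varepsilon)_+\in\mcl{A}_+$ nonzero and properly infinite'' to ``$b\oplus b\precsim b$''; by \cite[Proposition 3.5 (ii)]{KirRor} this requires $b\in{\rm Ideal}[d]$, \ie{}that $d$ be full in ${\rm Ideal}[b]$. Nothing in your construction provides this: cutting $E_{\mcl{A}}(b)$ by $x$ and truncating by $\varepsilon$ can shrink the generated ideal arbitrarily, and neither your (correct) identification of ${\rm Ideal}[b]\cap\mcl{A}$ with the $G$-invariant ideal generated by $E_{\mcl{A}}(b)$ nor the ideal property repairs it. The ideal property does give projections $q\in\mcl{A}$ generating ${\rm Ideal}[b]$, each properly infinite by $(i)$, but \cite[Proposition 3.5 (ii)]{KirRor} only pushes elements of ${\rm Ideal}[d]$ \emph{below} $d$, whereas you need $b$ (or those $q$'s) below $d$ --- exactly what fails when ${\rm Ideal}[d]\subsetneq{\rm Ideal}[b]$. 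Indeed, in the quotient by ${\rm Ideal}[d]$ the image of $b$ is nonzero and your construction says nothing about it, while by \cite[Proposition 3.14]{KirRor} proper infiniteness of $b$ is equivalent to proper infiniteness (or vanishing) of its image in \emph{every} quotient; rerunning Lemma \ref{exellemma} in that quotient just produces a new $d'$ with the same defect, and the regress never terminates, because for general positive elements there is no quotient-free witness of proper infiniteness. The same objection undercuts your sketch of $(iii)\Rightarrow(i)$, which appeals to ``fullness, exactly as in the obstacle above''.

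This is precisely why the paper (following \cite{RorSie}) proves $(i)\Rightarrow(iii)$ rather than $(i)\Rightarrow(ii)$, and then quotes \cite{KirRor} for $(iii)\Rightarrow(ii)\Rightarrow(i)$: condition $(iii)$ asks only for \emph{some} infinite projection in each hereditary sub-\cst-algebra of each quotient, with no fullness requirement, and infiniteness of a projection is witnessed locally, so the quotient regress bottoms out there. Concretely, in $(\mcl{A/I})\rtimes_{\alpha,r}G$ one takes $a=(xE_{\mcl{A/I}}(b)x-1/2)_+\precsim b$ from Lemma \ref{exellemma}, uses the ideal property of $\mcl{A}$ to choose a projection $q\in\mcl{A}$ lying in the ideal generated by a preimage of $a$ but with $q\notin\mcl{I}$, and then applies \cite[Proposition 3.5 (ii)]{KirRor} in the harmless direction: $q+\mcl{I}$ lies in the ideal generated by the properly infinite element $a$ (a positive lift of $a$ is properly infinite by $(i)$, and the relation $a\oplus a\precsim a$ passes to quotients), so $q+\mcl{I}\precsim a\precsim b$; this projection then transfers to $p=b^{1/2}zz^*b^{1/2}$ inside the given hereditary subalgebra, and $p$ is infinite because $q+\mcl{I}$ is properly infinite. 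You should therefore reorganise your cycle as $(i)\Rightarrow(iii)\Rightarrow(ii)\Rightarrow(i)$, quoting \cite[Proposition 4.7 and Theorem 4.16]{KirRor} for the last two implications. As it stands, with $(i)\Rightarrow(ii)$ and $(iii)\Rightarrow(i)$ both resting on the unavailable fullness, your argument establishes only $(ii)\Rightarrow(i)$ and $(ii)\Rightarrow(iii)$ (the latter via Pasnicu--R{\o}rdam, which is correct but heavier than needed and outside the paper's bibliography); nothing returns to $(ii)$, so no equivalence is proved.
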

\begin{proof}
The implications $(iii)\Rightarrow (ii) \Rightarrow (i)$ are valid for any partial dynamical system, \cf{}\cite{KirRor}. For $(i)\Rightarrow (iii)$ let $\mcl{J}$ be any ideal in ${\mcl{A}\rtimes_{\alpha,r}G}$ and let $\mcl{B}$ be any nonzero hereditary sub-\cst-algebra in the quotient $(\mcl{A}\rtimes_{\alpha,r} G)/\mcl{J}$. We show that $\mcl{B}$ contains an infinite projection. By assumption on $\alpha$ and Corollary \ref{cor2.10} we have that $(\mcl{A}\rtimes_{\alpha,r} G)/\mcl{J}\cong (\mcl{A/I})\rtimes_{\alpha,r} G$ for $\mcl{I=J\cap A}$. Select a nonzero positive element $b$ in $\mcl{B}$ such that $\|E_{\mcl{A/I}}(b)\|=1$. By Lemma \ref{exellemma} there exist a positive contraction $x\in \mcl{A/I}$ satisfying 
$$\|xE_{\mcl{A/I}}(b)x-xbx\|< 1/4, \ \ \ \|xE_{\mcl{A/I}}(b)x\|> \|E_{\mcl{A/I}}(b)\|-1/4=3/4.$$
With $a = (xE_{\mcl{A/I}}(b)x-1/2)_+$ we claim that $0\neq a\precsim xbx \precsim b$. Indeed, the element $a$ is nonzero because $\|xE_{\mcl{A/I}}(b)x\|>1/2$, and $a\precsim xbx$ holds since $\|xE_{\mcl{A/I}}(b)x-xbx\| < 1/2$, \cf{}\cite[Proposition 2.2]{Ror}. By the assumption that $\mcl{A}$ has the ideal property we can find a projection $q\in \mcl{A}$ that belongs to the ideal in $\mcl{A}$ generated by the preimage of $a$ in $\mcl{A}$ but not to $\mcl{I}$. Then $q+\mcl{I}$ belongs to the ideal in $\mcl{A/I}$ generated by $a$, whence $q+\mcl{I} \precsim a \precsim b$ in $\mcl{A/I} \rtimes_r G$ (because $a$ is properly infinite by assumption), \cf{}\cite[Proposition 3.5 (ii)]{KirRor}. From the comment after \cite[Proposition 2.6]{KirRor} we can find $z\in\mcl{A/I}\rtimes_r G$ such that $q+\mcl{I}=z^*bz$. With $v=b^{1/2}z$ it follows that $v^*v= q+\mcl{I}$, whence $p := vv^* =
b^{1/2}zz^*b^{1/2}$ is a projection in $\mcl{B}$, which is equivalent to $q+\mcl{I}$. By the assumption $q$ is properly infinite, and hence so is $q+\mcl{I}$ (since the relation $a\otimes a\precsim a$ passes to quotients) and $p$.
\end{proof}

We now introduce a geometrical condition sufficient for the pure infiniteness of a large class of partial crossed product \cst-algebras. It is not clear if this geometrical condition is also a necessary one.

\begin{definition}
\label{paradox}
Let $(\czero(X),G,\alpha)$ be a partial dynamical system, together with the corresponding collection of homeomorphisms $(\newfunk{\theta_t}{X_{t^{-1}}}{X_t})_{t \in G}$, inducing $\alpha$, \cf{}Appendix \ref{appendixA2}. Let $\mathbb{E}$ denote a family of subsets of $X$. A non-empty set $V\subseteq X$ is called \emph{$(G,\mathbb{E})$-paradoxical} if there exist sets $V_1,V_2\dots,V_{n+m}\in \mathbb{E}$ and elements $t_1,t_2,\dots,t_{n+m}\in G$ such that
$$\bigcup_{i=1}^n V_i=\bigcup_{i=n+1}^{n+m} V_i=V, \ \ \ V_i\subseteq {X_{t_{i}^{-1}}}, \ \ \ \theta_{t_i}(V_i)\subseteq V, \ \ \ \theta_{t_k}(V_k)\cap\theta_{t_l}(V_l)=\emptyset, k\neq l.$$
We let $\tau_X$ denote the topology of $X$.
\end{definition}

\begin{theorem}
\label{the4.2}
Let $(\czero(X),G,\alpha)$ be a partial dynamical system. Suppose that the action is exact and {residually topologically} free and that $X$ is totally disconnected. Suppose also that every compact and open subset of $X$ is $(G,\tau_X)$-paradoxical. Then $\czero(X)\rtimes_{\alpha,r} G$ is purely infinite.
\end{theorem}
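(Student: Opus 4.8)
The plan is to deduce pure infiniteness from Theorem \ref{the4.1}, so the first task is to verify its three hypotheses for $(\czero(X),G,\alpha)$. Exactness is assumed. Since $\czero(X)$ is abelian, residual topological freeness means precisely that the induced partial action on each quotient $\czero(X)/\mcl{I}\cong\czero(Y)$ (with $Y\subseteq X$ closed and $G$-invariant) is topologically free; applying the equivalence (i)$\Leftrightarrow$(iv) of Proposition \ref{abeliantopfree} to each quotient system then yields the displayed infimum condition of Theorem \ref{the4.1} for every $G$-invariant ideal $\mcl{I}$. Finally, $\czero(X)$ has the ideal property: when $X$ is totally disconnected the compact open sets form a basis, so for open sets $U\not\subseteq U'$ one finds a compact open $W$ with $W\subseteq U$ and $W\not\subseteq U'$, and the projection $1_W$ then separates the corresponding ideals. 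With all three hypotheses in place, Theorem \ref{the4.1} reduces the problem to showing that every nonzero positive element of $\czero(X)$ is properly infinite in $\czero(X)\rtimes_{\alpha,r}G$.

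The heart of the argument is the claim that $1_V$ is properly infinite for every compact open $V$. First I would use paradoxicality of $V$ together with total disconnectedness to replace the open witnessing sets by compact open ones: given the cover $V=\bigcup_{i=1}^n V_i=\bigcup_{i=n+1}^{n+m}V_i$ with the $\theta_{t_i}(V_i)\subseteq V$ pairwise disjoint, refine each $V_i$ to a compact open $W_i\subseteq V_i$ with $\bigcup_{i=1}^n W_i=V=\bigcup_{i=n+1}^{n+m}W_i$; the images $\theta_{t_i}(W_i)$ remain compact open and pairwise disjoint. For each $i$ the element $w_i:=1_{\theta_{t_i}(W_i)}\delta_{t_i}$ of the crossed product satisfies $w_i^*w_i=1_{W_i}$ and $w_iw_i^*=1_{\theta_{t_i}(W_i)}$, so $1_{W_i}\sim 1_{\theta_{t_i}(W_i)}$. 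Using $1_V\le\sum_{i=1}^n 1_{W_i}$ pointwise together with these Murray--von Neumann equivalences one gets $1_V\precsim\bigoplus_{i=1}^n 1_{W_i}\sim\bigoplus_{i=1}^n 1_{\theta_{t_i}(W_i)}\sim p$, where $p:=\sum_{i=1}^n 1_{\theta_{t_i}(W_i)}$, and likewise $1_V\precsim q:=\sum_{i=n+1}^{n+m}1_{\theta_{t_i}(W_i)}$. Since all the $\theta_{t_i}(W_i)$ with $1\le i\le n+m$ are pairwise disjoint and contained in $V$, the projections $p$ and $q$ are orthogonal with $p+q\le 1_V$, whence $1_V\oplus 1_V\precsim p\oplus q\sim p+q\le 1_V$. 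Thus $1_V$ is properly infinite.

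To pass from projections to an arbitrary nonzero $a\in\czero(X)_+$, fix $\varepsilon>0$: since $\{a\ge\varepsilon\}$ is compact and contained in the open set $\{a>0\}$, total disconnectedness furnishes a compact open $V$ with $\{a\ge\varepsilon\}\subseteq V\subseteq\{a>0\}$, giving $(a-\varepsilon)_+\precsim 1_V\precsim a$. Combined with the proper infiniteness of $1_V$ this yields $(a-\varepsilon)_+\oplus(a-\varepsilon)_+\precsim 1_V\oplus 1_V\precsim 1_V\precsim a$ for every $\varepsilon>0$; since $(a\oplus a-\varepsilon)_+=(a-\varepsilon)_+\oplus(a-\varepsilon)_+$, the standard characterisation of Cuntz subequivalence (\cf{}\cite{Ror}) gives $a\oplus a\precsim a$, so $a$ is properly infinite and the theorem follows. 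I expect the main obstacle to be the key lemma of the second paragraph: one must organise the partial-isometry bookkeeping and the refinement to compact open sets so that the two covering relations and the single global disjointness relation combine correctly into $1_V\oplus 1_V\precsim 1_V$, and one must check that all the characteristic functions involved genuinely lie in the relevant ideals $\mcl{D}_{t_i}=\czero(X_{t_i})$ and $\mcl{D}_{t_i^{-1}}=\czero(X_{t_i^{-1}})$, so that the $w_i$ make sense in the crossed product.
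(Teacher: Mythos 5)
Your proof is correct, but it diverges from the paper's own argument in the decisive second half, in a way worth recording. For the proper infiniteness of $1_V$ the two arguments are essentially interchangeable: you refine the witnessing open sets to compact open ones and use the partial isometries $1_{\theta_{t_i}(W_i)}\delta_{t_i}$, whereas the paper keeps the open covers as they are and uses partitions of unity $(h_i)$, forming $x=\sum_{i\le n}\alpha_{t_i}(h_i^{1/2})\delta_{t_i}$ and $y=\sum_{i>n}\alpha_{t_i}(h_i^{1/2})\delta_{t_i}$ with $x^*x=y^*y=1_V$, $y^*x=0$ and $xx^*+yy^*\le 1_V$ (so the paper's version of this step does not use total disconnectedness). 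The genuine difference is the passage from these projections to pure infiniteness. The paper states explicitly that it does not know whether this part follows from Theorem \ref{the4.1}; accordingly, it never verifies condition (i) of that theorem, but instead re-runs its proof with a substitute step: total disconnectedness gives real rank zero of $\mcl{A}$ and $\mcl{A/I}$ together with projection lifting (\cite{BroPed}), so inside the hereditary subalgebra of $\mcl{A/I}$ generated by the element $a$ produced via Lemma \ref{exellemma} one can place a projection of the form $q+\mcl{I}$ with $q\in\mcl{A}$ and $0\neq q+\mcl{I}\precsim a$, which is infinite because $q=1_W$ is properly infinite by the first part; this verifies condition (iii) of Theorem \ref{the4.1} directly. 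You instead verify condition (i) outright, extending proper infiniteness from the projections $1_V$ to every nonzero $a\in\czero(X)_+$ by the sandwich $(a-\varepsilon)_+\precsim 1_V\precsim a$ for a compact open $V$ with $\{a\ge\varepsilon\}\subseteq V\subseteq\{a>0\}$ (note $1_V\precsim a$ holds because $a$ is bounded below on the compact clopen $V$, so $1_Va^{-1/2}\in\czero(X)$), together with $(a\oplus a-\varepsilon)_+=(a-\varepsilon)_+\oplus(a-\varepsilon)_+$ and the standard fact that $(b-\varepsilon)_+\precsim a$ for all $\varepsilon>0$ implies $b\precsim a$; then Theorem \ref{the4.1} applies as a black box. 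All of these steps check out, and your hypothesis bookkeeping (exactness, the infimum condition on all quotients via Proposition \ref{abeliantopfree}, the ideal property from total disconnectedness) is exactly what is needed. What your route buys is modularity, and it in fact settles affirmatively the paper's expressed doubt about whether Theorem \ref{the4.1} suffices; what the paper's route buys is that it bypasses the Cuntz-comparison manipulations (and the ideal-property hypothesis), at the cost of invoking the real-rank-zero and lifting results of \cite{BroPed}.
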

\begin{proof}
The proof consist of two parts. First we show that for any $(G,\tau_X)$-paradoxical, compact and open subset $U \subseteq X$ the projection ${1_U}$ is properly infinite in ${C_0(X)\rtimes_{\alpha,r}G}$. We then prove that proper infiniteness of such projections is enough to ensure pure infiniteness of $\czero(X)\rtimes_{\alpha,r} G$. We do not know {if} the second part follows from Theorem \ref{the4.1}.

{(i) Proper infiniteness:} Recall that $\alpha$ is induced by a collection of open sets $(X_t)_{t\in G}$ and a collection $(\theta_t)_{t\in G}$ of homeomorphisms $\newfunk{\theta_t}{X_{t^{-1}}}{X_t}$ such that $X_e=X$ and $\theta_{st}$ extends  $\theta_{s}\circ\theta_{t}$, \cf{}\cite{McClan, Exe, QuiRae} and Appendix \ref{appendixA2}. The partial action $\alpha$ of $G$ on $\czero(X)$ corresponding to $\theta$ is given by $\alpha_t(f)(x) := f(\theta_{t^{-1}}(x))$, $f \in \czero(X_{t^{-1}})$. So, here the ideals are $\mcl{D}_t = \czero(X_t)$. Let $(V_i,t_i)_{i=1}^{n+m}$ denote the system of open sets of $U$ and elements in $G$ witnessing the paradoxality of $U$. Find partitions of unity $(h_i)_{i=1}^n$ and $(h_i)_{i=n+1}^{n+m}$ for $U$ relative to the open covers $(V_i)_{i=1}^{n}$ and $(V_i)_{i=n+1}^{n+m}$, respectively. For each $i=1,\dots, n+m$ we have that the (compact) support of $h_i$ lies in $V_i\subseteq X_{t_i^{-1}}$. Hence $h_i\in \mcl{D}_{t_i^{-1}}$ and we can define $a_i:=\alpha_{t_i}(h_i^{1/2})$, for $i=1,\dots,n+m$, and the elements
$$x=\sum_{i=1}^na_i\delta_{t_i}, \ \ \ y=\sum_{i=n+1}^{n+m}a_i\delta_{t_i}, \ \ \ p={1_U}.$$
Using that $(a_t\delta_t)(a_s\delta_s)=\alpha_t(\alpha_{t^{-1}}(a_t)a_s)\delta_{ts}$ and $(a_t\delta_t)^*=\alpha_{t^{-1}}(a_t^*)\delta_{t^{-1}}$ it follows that
\begin{align*}
	(a_i\delta_{t_i})^*(a_j\delta_{t_j}) &= (\alpha_{t_i^{-1}}(a_i^*)\delta_{t_i^{-1}})(a_j\delta_{t_j})\\
	& = \alpha_{t_i^{-1}}(\alpha_{t_i}(\alpha_{t_i^{-1}}(a_i^*))a_j)\delta_{t_i^{-1}g_j}\\
	& = \alpha_{t_i^{-1}}(a_i^*a_j)\delta_{t_i^{-1}t_j}\\
	&=\left\{
	\begin{array}{lr}
	{h_i} &\mbox{ if } i=j\\
	0 &\mbox{ if } i\neq j.\\
	\end{array}\right.
\end{align*}
We obtain that $x^*x=y^*y=p$ and $y^*x=0$. Moreover, since
\begin{align*}
	{1_U}(a_i\delta_{t_i}) &= \alpha_{e}(\alpha_e(1_U)a_i)\delta_{et_i}\\
	&=(1_Ua_i)\delta_{t_i}\\
	&=a_i\delta_{t_i},
\end{align*}
we have that $px=x$, $py=y$. This implies that $xx^*+yy^*\leq p$, hence $p$ is properly infinite, \cf{}\cite{KirRor}.

{(ii) Pure infiniteness:} This part follows by inspection of the proof of Theorem \ref{the4.1}. The difference is that we do not assume that every nonzero element $a\in \mcl{A/I}$ is properly infinite (in $\mcl{A/I}\rtimes_{\alpha,r} G$) and we can therefore not conclude that the selected nonzero projection $q + \mcl{I}$ in the ideal in $\mcl{A/I}$ generated by $a$ fulfils that $q+\mcl{I} \precsim a$ (in $\mcl{A/I} \rtimes_r G$) using \cite[Proposition 3.5 (ii)]{KirRor}. Instead we find $q\in\mcl{A}$ fulfilling $0\neq q + \mcl{I}\precsim a$ as follows: Since $X$ {is} totally disconnected we get that $\mcl{A}$ and $\mcl{A/I}$ have real rank zero and every projection in $\mcl{A/I}$ lifts to a projection in $\mcl{A}$, \cf{}\cite[Proposition 1.1, Theorem 3.14]{BroPed}. We can therefore select any nonzero projections in the hereditary sub-\cst-algebra of $\mcl{A/I}$ generated by $a$ and lift it to a projections $q\in \mcl{A}$. Since $0\neq q + \mcl{I}$ is contained in the hereditary sub-\cst-algebra of $\mcl{A/I}$, $q + \mcl{I}\precsim a$, \cite[Proposition 2.7]{KirRor}.
\end{proof}

\section{Examples}
\label{examples}
\label{section4}
\begin{example}[The Cuntz algebra $\mathcal{O}_n$]
The \emph{Cuntz algebra}, denoted by $\mathcal{O}_n$, $n\geq 2$ is the universal \cst-algebra generated by isometries $s_1,s_2,\dots,s_n$ subject to the relation
$$
\sum_{i=1}^n s_is_i^* =1
$$

Let $\QQ_n$ be the group $\{\frac{p}{n^k}\colon p\in \ZZ, k\in \NN{\cup \{0\}}\}$ of $n$-adic rationals. We will denote by $\qqxz$ the semidirect product: $\qqxz = \{(r, k): r \in \QQ_n, k \in \ZZ\}$. The two group operations are given by: $(s, j)(r, k) = (\frac{r}{n^j}+s, j+k)$ and $(r, k)^{-1} = (-n^kr, -k)$. Hopenwasser constructs in \cite{Hop} a partial action $\alpha$ of $\QQ_n$ on the Cantor set $X$ such that $\ccont(X)\rtimes_{\alpha,r} G \cong \mathcal{O}_n$.
\begin{proposition}
Let $X$ be the Cantor set and let $(\ccont(X),\qqxz,\alpha)$ be the partial dynamical system described in \cite{Hop}. Then every clopen subset of $X$ is $(\qqxz,\tau_X)$-paradoxical.
\end{proposition}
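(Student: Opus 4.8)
The goal is to show that an arbitrary clopen set $U \subseteq X$ is $(\qqxz, \tau_X)$-paradoxical, in the sense of Definition \ref{paradox}. The approach will be to exploit the concrete realization $\ccont(X)\rtimes_{\alpha,r}(\qqxz) \cong \Oo_n$ and the self-similar/expanding nature of Hopenwasser's partial action. The key structural feature is that $X$, being the Cantor set here, carries a natural identification (via $n$-adic expansions) with a space on which the generators $(r,k)$ of $\qqxz$ act by partial shifts: the integer part $k$ governs an $n$-to-one expansion (or $n$-to-one contraction, depending on sign), while the $n$-adic rational part $r$ governs a translation. The plan is to produce, for a given clopen $U$, the two families $V_1,\dots,V_n$ and $V_{n+1},\dots,V_{n+m}$ together with group elements whose images under $\theta$ are disjoint and cover $U$.

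First I would fix notation for the cylinder sets: since $X$ is the Cantor set realized via $n$-adic (base-$n$) expansions, a natural clopen basis consists of cylinders determined by finite initial words in $\{0,1,\dots,n-1\}$, and every clopen $U$ is a finite disjoint union of such cylinders. The crucial point is that the element $(0,1) \in \qqxz$ (or its inverse) implements, through $\theta$, the one-sided shift, so that a single cylinder splits into $n$ disjoint ``children'' each of which maps homeomorphically onto a larger cylinder, and conversely $\theta_{(0,1)}$ maps a cylinder of depth $\ell$ onto one of depth $\ell-1$. This is exactly the feature that produces paradoxicality in the Cuntz algebra: the relation $\sum_i s_is_i^*=1$ is the algebraic shadow of the fact that $X$ is covered by $n$ disjoint copies of itself, each the image of $X$ under a partial translation-and-expansion. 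So the main step is to verify that $U$ itself can be covered in two essentially independent ways by such shifted copies, using the translation part $r\in\QQ_n$ to separate the images inside $U$.

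\textbf{Main steps in order.}
(1) Reduce to the case where $U$ is a single cylinder set, since a finite disjoint union of paradoxical clopen sets that can be uniformly covered is again paradoxical, and because cylinders form a basis it suffices to treat them; I would verify that paradoxicality is preserved under finite disjoint unions with a common target.
(2) For a single cylinder $U$ of depth $\ell$, use the $n$ partial maps coming from $(0,1)$ and its translates to write $U = \bigcup_{i=1}^n V_i$ as a disjoint refinement into depth-$(\ell+1)$ cylinders, each of which maps \emph{onto} $U$ (not merely into it) under an appropriate $\theta_{t_i}$; this gives one of the two required covers, and crucially the images $\theta_{t_i}(V_i)$ are pairwise disjoint because they are distinct depth-$(\ell)$ pieces.
(3) Repeat the construction at one further level of depth to produce the second cover $U=\bigcup_{i=n+1}^{n+m}V_i$, choosing translation elements $r$ so that this second family of images $\theta_{t_i}(V_i)$ is disjoint from the first family; here one uses that $\QQ_n$ is dense and that one can translate copies to ``fresh'' locations inside $U$.
(4) Check the defining conditions of Definition \ref{paradox} hold: each $V_i\subseteq X_{t_i^{-1}}$ (which amounts to checking the cylinder lies in the domain of the relevant partial map), $\theta_{t_i}(V_i)\subseteq U=V$, and the total disjointness $\theta_{t_k}(V_k)\cap\theta_{t_l}(V_l)=\emptyset$ for $k\neq l$ across all $n+m$ indices.

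\textbf{Anticipated obstacle.} The hard part will be step (3)–(4): arranging that the two covers produce images that are \emph{globally} pairwise disjoint across both families, not merely disjoint within each family. In the Cuntz algebra the two families of partial isometries correspond to two orthogonal decompositions of the identity, so one must be careful that the chosen translation elements $r\in\QQ_n$ genuinely move the second batch of copies into the part of $U$ not occupied by the first batch, and that the domains $X_{t_i^{-1}}$ of Hopenwasser's partial action are large enough to contain the cylinders in question. I expect the cleanest route is to work entirely at the level of the $n$-adic expansion: encode each $\theta_t$ concretely as a prefix-rewriting partial map, reduce the disjointness requirements to statements about distinct finite prefixes, and then the conditions of Definition \ref{paradox} become bookkeeping about which initial words appear. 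The one genuine subtlety to confirm against \cite{Hop} is the exact form of the domains $X_{t^{-1}}$ and the action formula for $\theta_{(r,k)}$, so that the elements $t_i=(r_i,k_i)$ chosen indeed satisfy $V_i\subseteq X_{t_i^{-1}}$.
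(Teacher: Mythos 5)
Your plan runs on the same two engines as the paper's proof --- the base-$n$ cylinder structure of $X$ with the expansion implemented by the $\ZZ$-part of $\qqxz$, and translations by $n$-adic rationals --- and your step (1) matches the paper's opening reduction to a single basic clopen set. However, step (2), the heart of the construction, is self-contradictory as written: if each depth-$(\ell+1)$ child $V_i$ of $U$ is mapped \emph{onto} $U$ by $\theta_{t_i}$, then $\theta_{t_i}(V_i)=U$ for every $i$, so the $n$ images all coincide and are as far from pairwise disjoint as possible. What you describe verifies the \emph{classical} form of paradoxicality (pairwise disjoint pieces of $U$ whose images under the group each cover $U$), whereas Definition \ref{paradox} is the inverted statement: one must produce two \emph{covers} of $U$ whose images under the chosen group elements are pairwise \emph{disjoint} subsets of $U$. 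The two formulations are close, but your step (4) promises to check the conditions of Definition \ref{paradox}, and the data produced in steps (2)--(3) do not satisfy them.

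The repair is immediate, and it dissolves your ``anticipated obstacle'' entirely: invert the maps. If $t_1,t_2\in\qqxz$ map the first two children $V_1,V_2$ of $U$ onto $U$ (such elements exist: translate the child onto $[0^+,(1/n^{\ell+1})^-]$, expand by $\theta_{(0,-(\ell+1))}$ onto $X$, then contract and translate back onto $U$), then $U\subseteq X_{t_i}$ and $\theta_{t_i^{-1}}(U)=V_i$. Now take, in the notation of Definition \ref{paradox}, $n=m=1$, both covers equal to the single set $U$, and group elements $t_1^{-1}$, $t_2^{-1}$: each cover is trivially a cover of $U$, and the images $V_1$, $V_2$ are disjoint subsets of $U$ because they are distinct cylinders --- no bookkeeping about ``fresh locations'' or careful choices of translations is needed. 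This inverted construction is also, in essence, what the paper does, though it organizes the argument differently: rather than decomposing each cylinder in place, it uses the observation that $U$ is paradoxical if and only if $\theta_s(U)$ is (for $U\subseteq X_{s^{-1}}$) to move an arbitrary basic clopen set --- via the translations $\theta_{(-1/n^k,0)}$ and the expansion $\theta_{(0,-k)}$ --- onto the whole space $X$, and then exhibits $X$ itself as paradoxical by shrinking two copies of $X$ and translating one off the other; that last step is exactly the construction above with $U=X$.
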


\begin{proof}
Let $X$  denote the Cantor set based on $[0,1]$ where each $n$-adic rational $r$ (exempt 0 and 1) is replaced by a pair $r^-$, $r^+$, such that $r^-$ is the \emph{immediate predecessor} of $r^+$ (\ie{}there are no non-trivial elements $x$ in $X$ such that $r^-\leq x \leq r^+$). To simplify notation we will identify $0^+$ with $0$ and $1^-$ with $1$.

Let $U$ be any nonempty clopen subset of $X$. Recall that sets of the form $[(\frac{p}{n^k})^{+}, (\frac{q}{n^l})^{-}]$ (with $p,q\in \NN{\cup \{0\}}$, and $k,l\in\NN$) form a basis for the topology on $X$ consisting of compact and open sets. It follows that $U$ is a finite disjoint union of such sets (because we can cover $U$ by finitely many such subsets of $U$ and then remove the parts where they intersect). Since a disjoint union of compact open $(\qqxz,\tau_X)$-paradoxical sets is again $(\qqxz,\tau_X)$-paradoxical we can assume that $U=[(\frac{p}{n^k})^{+}, (\frac{q}{n^l})^{-}]$ for some $p,q\in \NN{\cup \{0\}}$, and $k,l\in\NN$. Rewriting the fractions allows us to assume $l=k$. Since $U$ is nonempty $p<q$. Suppressing the index $\pm$ we have that $U$ is a disjoint union of sets $[\frac{p}{n^k}, \frac{p+1}{n^k}]$, $\dots$, $[\frac{q-1}{n^k}, \frac{q}{n^k}]$. We can therefore assume that $q=p+1$. If $U\subseteq X_{s^{-1}}$ for some $s\in \qqxz$ it follows that $U$ is $(\qqxz,\tau_X)$-paradoxical if, and only if, $\theta_s(U)\subseteq X_s$ is $(\qqxz,\tau_X)$-paradoxical. Using the relations
\begin{align*}
	 X_{{(-\frac{1}{n^k},0)}^{-1}}&=[\frac{1}{n^k},1], & \theta_{(-\frac{1}{n^k},0)}\big(\big[\frac{p}{n^k}, \frac{p+1}{n^k}\big]\big)&=\big[\frac{p-1}{n^k}, \frac{p}{n^k}\big] \ \ ({\rm if }\ k\neq 0),\\
	 X_{(0,-k)^{-1}}&=\big[0, \frac{1}{n^k}\big] & \theta_{(0,-k)}\big(\big[0, \frac{1}{n^k}\big]\big)&=X.
\end{align*}
we can assume that $U=X$. It is evident that $X$ is $(\qqxz,\tau_X)$-paradoxical. In fact just like the crossed product associated to the action of the Baumslang-Solitar group on $\RR$, \cf{}\cite{KirSie}, we also have here an action that mimics both translation and scaling. By first shrinking two copies of $X$ and then translating one of subsets away from the other, we naturally obtain the paradoxical property of $X$.
\end{proof}

\end{example}

\begin{example}[The Cuntz-Krieger algebra ${\mathcal O}_A$]
Let $A=[a_{ij}]$ be  a $\{0,1\}$-valued $(n\times n)$-matrix with no zero rows. We define the algebra $\mathcal O_A$ to be the universal \cst-algebra generated by $n$ partial isometries $\{s_i\}_{i=1}^n$ satisfying that 
$$
\sum_j s_j s_j^* = 1, \quad \text{ and } \quad
\sum_j a_{ij} s_j s_j^* = s_i^* s_i \quad\text{ for }i = 1, 2, \ldots ,n.
$$
We have chosen to define $\mathcal O_A$ as a universal object, as in \cite{HueRae}, because it allows us to identify it with a partial crossed product. If the matrix $A$ satisfies Condition (I) of Cuntz and Krieger, which implies the uniqueness of the $C^*$-algebra $C^*(\{s_i\})$ provided that $s_i \neq 0$ for every $i$, we obtain the well known Cuntz-Krieger algebra introduced in \cite{CunKri}.
\begin{proposition}
Let  $X$ be a compact totally disconnected space and $G= \FF_n$ be the free group on $n$ generators $\{ g_1, g_2, \dots, g_n\}$.
Let $(\ccont(X),G,\alpha)$ denote the partial dynamical system described by \cite{ExeLacQui} such that $$\ccont(X)\rtimes_{\alpha,r} G \cong \mathcal O_A.$$
If $A$ is such that the action is exact and {residually topologically} free, then the following statements are equivalent
\begin{enumerate}[(i)]
\item  Every compact and open subset of $X$ is $(G,\tau_X)$-paradoxical
\item The \cst-algebra $\ccont(X)\rtimes_{\alpha,r} G$ is purely infinite.
\end{enumerate}
\end{proposition}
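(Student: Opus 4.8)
The plan is to treat the two implications asymmetrically: $(i)\Rightarrow(ii)$ is essentially free, while $(ii)\Rightarrow(i)$ carries all the content. For $(i)\Rightarrow(ii)$ I would simply invoke Theorem \ref{the4.2}. All of its hypotheses are in force: $X$ is compact and totally disconnected, so $\ccont(X)=\czero(X)$; the partial action is exact and residually topologically free by assumption; and $(i)$ is verbatim the paradoxicality hypothesis of that theorem. Hence $\ccont(X)\rtimes_{\alpha,r}G$ is purely infinite and $(ii)$ follows with no further work.

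The substance is in $(ii)\Rightarrow(i)$, which I would prove by contraposition: assuming some compact open $U\subseteq X$ is \emph{not} $(G,\tau_X)$-paradoxical, I would manufacture a trace obstructing pure infiniteness. First I would form the type semigroup of the partial system on the Boolean algebra of compact open subsets of $X$, with equidecomposability generated by $V\sim\theta_t(V)$ whenever $V\subseteq X_{t^{-1}}$; unwinding Definition \ref{paradox} shows that $U$ being paradoxical is exactly the relation $2[U]\le[U]$ in this semigroup, so by hypothesis $2[U]\not\le[U]$. The Tarski alternative (the monoid form of the paradoxical-decomposition dichotomy) then yields a $G$-invariant finitely additive measure $\mu$ on the compact open sets with $\mu(U)=1$. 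Since $X$ is totally disconnected, integrating locally constant functions against $\mu$ gives a bounded positive $G$-invariant functional on $\ccont(U)$, and composing with the canonical conditional expectation $\newfunk{E_{\ccont(X)}}{\ccont(X)\rtimes_{\alpha,r}G}{\ccont(X)}$ produces $\tau:=\mu\circ E_{\ccont(X)}$. Invariance of $\mu$ makes $\tau$ tracial, and because $\mu\le\mu(U)=1$ on compact open subsets of $U$, the restriction of $\tau$ to the corner $1_U(\ccont(X)\rtimes_{\alpha,r}G)1_U$ extends by boundedness to a trace with $\tau(1_U)=1$.

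The contradiction is then formal. Were $\ccont(X)\rtimes_{\alpha,r}G$ purely infinite, the nonzero projection $1_U$ would be properly infinite, so there would exist partial isometries $v_1,v_2$ in the corner with $v_i^*v_i=1_U$ and $v_1v_1^*+v_2v_2^*\le 1_U$. Applying the trace gives $2\tau(1_U)=\tau(v_1v_1^*)+\tau(v_2v_2^*)\le\tau(1_U)$, whence $\tau(1_U)\le 0$, contradicting $\tau(1_U)=1$. Thus every compact open subset of $X$ must be paradoxical. I note that, on this route, the reverse implication uses neither exactness nor residual topological freeness; those hypotheses enter only through Theorem \ref{the4.2} in the forward direction.

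The step I expect to be the real obstacle is the middle one: establishing the Tarski dichotomy for the equidecomposability relation of a \emph{partial} action — where one must track the domains $X_{t^{-1}}$ together with the simultaneous covering and disjointness requirements of Definition \ref{paradox} — and then checking carefully that the resulting finitely additive invariant measure descends to a genuine trace through $E_{\ccont(X)}$. A second, more computational route, matching the graph-theoretic tools flagged in the introduction, is to use the identification of $\mathcal O_A$ with the graph \cst-algebra $C^*(E_A)$ of the matrix $A$ and the known combinatorial description of when such an algebra is purely infinite, and then to translate that description into the paradoxicality condition of Definition \ref{paradox} on the spectrum $X_A$; there the difficulty migrates to reconciling the graph condition with the geometric one.
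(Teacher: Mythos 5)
Your direction $(i)\Rightarrow(ii)$ is exactly the paper's: invoke Theorem \ref{the4.2}. The fatal problem is in $(ii)\Rightarrow(i)$, at precisely the step you flag as ``the real obstacle'' --- but that step is not a careful verification waiting to be done; it is a known open problem, and it is the very reason the paper warns, just before Theorem \ref{the4.2}, that it is not clear whether the paradoxicality condition is necessary for pure infiniteness in general. The Tarski dichotomy, in the generality valid for arbitrary commutative semigroups, says only: either there is a state $\nu$ on the type semigroup with $\nu([U])=1$, or $(n+1)[U]\le n[U]$ for \emph{some} $n$. The latter conclusion is strictly weaker than $2[U]\le[U]$; it says some multiple of $[U]$ is paradoxical, not $[U]$ itself. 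The classical Tarski theorem upgrades this to the clean dichotomy ``paradoxical or invariant measure'' only because the type semigroup of a \emph{global} group action on the \emph{full power set} satisfies a cancellation/division law (proved via K\"onig's theorem, using arbitrary, non-open pieces), which converts $(n+1)[U]\le n[U]$ into $2[U]\le[U]$. For the semigroup you build --- compact open pieces, covers, a partial action --- no such cancellation or almost unperforation is known; this is exactly the open question discussed in \cite{RorSie}. Consequently, from $2[U]\not\le[U]$ you cannot extract the invariant measure, your trace is never constructed, and the contradiction never materialises. (The remaining steps --- measure $\to$ trace via $E_{\ccont(X)}$, and a trace obstructing proper infiniteness of $1_U$ --- are fine, but they hang on this missing link.)

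The paper proves $(ii)\Rightarrow(i)$ by the route you mention only in passing as a second option, and that route is not optional here --- it is the only one available. One identifies $\mathcal O_A$ with the graph \cst-algebra $C^*(E)$ of the matrix $A$; pure infiniteness of $C^*(E)$ forces condition (K) by Hong--Szyma\'nski \cite{HonSzy}; a general compact open $U$ is reduced (disjoint union of cylinder sets, translation by the partial action, and a finite rewriting procedure) to a set $X_{g_i}$ with a loop based at $g_i$; condition (K) then supplies two distinct loops $\beta',\beta''$ at $g_i$, giving disjoint subsets $X_{\beta'},X_{\beta''}\subseteq X_{g_i}$, each of which is carried back onto $X_{g_i}$ by following its loop --- an explicit paradoxical decomposition. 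So to repair your proof you must execute that combinatorial graph argument; the semigroup shortcut is not a shortcut but an unresolved conjecture.
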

\begin{proof}
The implication $(i)\Rightarrow (ii)$ follows form Theorem \ref{the4.2}. For the converse implication $(ii)\Rightarrow (i)$ we need to recall the construction of the partial crossed product.  An \emph{infinite admissible path} is an infinite sequence $\mu = \mu_1 \mu_2 \ldots $ of generators of $G$ such that $A(\mu_j,\mu_{j+1}) = 1$ for every $j\in\mathbb N$ (where we identify $A(g_i,g_j)$ with $A(i,j)$). Let $X$ be the \emph{path space} of infinite admissible paths with the relative topology inherited as a closed and hence compact subspace of the infinite product space $\prod_{i=1}^\infty \{g_1, \ldots, g_n\}$. Let $|t|$ $(=k)$ denote the length of a reduced word $t = g_{i_1}^{\pm 1} g_{i_2}^{\pm 1} \cdots g_{i_k}^{\pm 1}$ in $G$. An action $\theta$ of $G$ on $X$ is called \emph{semisaturated} if $\theta_{ts}=\theta_t\circ \theta_s$ for every $t,s\in G$ with $|ts|=|t|+|s|$ (\ie{}when there is no reduction in the concatenation of the reduced words $t$ and $s$). By \cite{ExeLacQui}\footnote{The version on arxiv.org have an additional section on this topic. See also \cite[p.~55]{Exe}.} we have that $\ccont(X)\rtimes_{\alpha,r} G \cong \mathcal O_A$, where the partial action is the unique semisaturated partial action of $G$ on $X$ such that
\begin{align*}
X_{g_i^{-1}}&=\{\mu\in X\colon A(g_i,\mu_1)=1\} = {\rm domain( }\, \theta_{g_i}\,),\\
\theta_{g_i} &= \mu \mapsto g_i\mu,
\end{align*}
where $g_i \mu$ means concatenation of $g_i$ at the beginning of $\mu$. Using the isomorphism between $\mathcal O_A$ and $\ccont(X)\rtimes_{\alpha,r} G$,  we have that $s_i=1_{X_{g_i}}\delta_{g_i}$, where $X_{g_i}=\theta_{g_i}(X_{g_i^{-1}})$, \cf{}\cite[Theorem 7.4]{ExeLacQui} and \cite[Theorem 6.5]{Exe4}. In particular $s_is_i^*={1_{X_{g_i}}}$. 

Let $U$ be any compact and open subset of $X$. Recall that the \emph{cylinder sets} in $X$ (\ie{} the sets consisting of infinite admissible paths with the same finite initial sequence) form a basis for the topology on $X$. In particular $U$ is a finite disjoint union of cylinder sets. Since a disjoint union of compact open $(G,\tau_X)$-paradoxical sets is again $(G,\tau_X)$-paradoxical we can assume that $U$ is a cylinder set. If $U\subseteq X_{s^{-1}}$ for some $s\in G$ it follows that $U$ is $(G,\tau_X)$-paradoxical if, and only if, $\theta_s(U)\subseteq X_s$ is $(G,\tau_X)$-paradoxical. We can therefore translate $U$ until it has the form $U=X_{g_i}$ for some $i=1,\dots,n$.

We recall the definition of the $C^*$-algebra corresponding to a directed graph, \cf{}\cite{FowLacRae}. Let $E=(E^0,E^1,r,s)$ be a directed graph with countably many vertices $E^0$ and  edges $E^1$, and range and source functions $r,s:E^1\rightarrow E^0$, respectively. The $C^*$-algebra $C^*(E)$ is the universal $C^*$-algebra generated by families of projections $\{p_v:v\in E^0\}$ and partial isometries $\{s_e:e\in E^1\}$, subject to the following relations:
\begin{enumerate}[(i)] 
\item $p_vp_w=0$ for $v,w\in E^0$, $v\neq w$. 
\item $s_e^*s_f=0$ for $e,f\in E^1$, $e\neq f$. 
\item $s_e^*s_e=p_{r(e)}$ for $e\in E^1$.  
\item $s_e s_e^*\leq p_{s(e)}$ for $e\in E^1$.  
\item $p_v=\displaystyle{\sum_{\{e\in E^1:\; s(e)=v\}}}s_e s_e^*$ for $v\in E^0$ such that $0<|s^{-1}(v)|<\infty$. 
\end{enumerate}

Let $E$ be the graph corresponding to the matrix $A$, where the generators for $G$ are the vertices and where we draw an edge from $g_i$ to $g_j$ when $A(i,j)=1$. It follows form \cite[Proposition 4.1]{KunPasRae} that $C^*(E)\cong \mathcal O_A$. Using the isomorphism to identify elements in $\mathcal O_A$ and $C^*(E)$ we have that $s_i=\sum_{\{e\in E^1 \colon s(e)=g_i\}} s_e$ and $s_e=s_{s(e)}s_{r(e)}s_{r(e)}^*$, \cf{}\cite{ManRaeSut}. This gives us that
$${1_U}=1_{X_{g_i}}=s_is_i^*=\sum_{\{e\in E^1 \colon s(e)=g_i\}} s_e s_e^*=p_{g_i}.$$
Since $C^*(E)$ is purely infinite, then ${1_U}$ is properly infinite (since every nonzero positive element in a purely infinite \cst-algebra is properly infinite), but we need a bit more work to obtain the $(G,\tau_X)$-paradoxical property of $U$. 

We will now argue that $U=X_{g_i}$ is a disjoint union of sets of the form $X_{\mu_1\mu_2\dots\mu_{k+1}}$, where there is a loop based at the last vertex $\mu_{k+1}$. By a loop we mean a sequence $\nu_1\nu_2\dots\nu_{l+1}$, $l\geq 1$ of elements in $\{g_1,\dots, g_n\}$ such that $A(\nu_i,\nu_{i+1})=1$ and $\nu_{l+1}=\nu_1)$. If there is a loop based at $g_i$ we are done ($k=0$). Otherwise we consider all $g_j$ in $\{g_1,\dots, g_n\}$ such that $A(i,j)=1$. As $A$ has no zero rows we have the union $X_{g_i}=\bigcup_{\{j\,:\,A(i,j)=1\}}X_{g_ig_j}$ is non-empty. We now look at each $g_j$ to see if there is a loop based at $g_j$. If there is a loop based at $g_j$ we keep $X_{g_ig_j}$ as it is. If not, we consider all $g_k\in\{g_1,\dots, g_n\}$ such that $A(j,k)=1$ and rewrite $X_{g_ig_j}$ as $\bigcup_{\{k\,:\,A(j,k)=1\}}X_{g_ig_jg_k}$. We now look at each $g_k$ to see if there is a loop based at $g_k$. If there is a loop based at $g_k$ we keep $X_{g_ig_jg_k}$ as it is. If not, we rewrite $X_{g_ig_jg_k}$ as $\bigcup_{\{l\,:\,A(k,l)=1\}}X_{g_ig_jg_kg_l}$. We continue this process until $U$ is rewritten into the desired form. This process is finite because we pick a new element in $\{g_1,\dots, g_n\}$ every time we do a rewriting (if a vertex appears a second time when doing a rewriting then there must be a loop at that vertex, hence we never started the rewriting at that particular vertex to begin with).

Knowing that $U$ is a disjoint union of sets of the form $X_{\mu_1\mu_2\dots\mu_{k+1}}$, where there is a loop based the last vertex $\mu_{k+1}$ (and that disjoint union of paradoxical sets is paradoxical), we can assume $U=X_{\mu_1\mu_2\dots\mu_{k+1}}$, with a loop based as at $\mu_{k+1}$. With $t=(\mu_1\mu_2\dots\mu_k)^{-1}$ we have that
\begin{align*}
X_{\mu_{k+1}}&=\theta_{\mu_{k}^{-1}}(X_{\mu_{k}\mu_{k+1}})=\theta_{\mu_{k}^{-1}}(\theta_{\mu_{k-1}^{-1}}(X_{\mu_{k-1}\mu_{k}\mu_{k+1}}))\\
&=\theta_{\mu_{k}^{-1}\mu_{k-1}^{-1}}(X_{\mu_{k-1}\mu_{k}\mu_{k+1}})=\theta_{(\mu_{k-1}\mu_{k})^{-1}}(X_{\mu_{k-1}\mu_{k}\mu_{k+1}})\\
&=\theta_{(\mu_{k-2}\mu_{k-1}\mu_{k})^{-1}}(X_{\mu_{k-2}\mu_{k-1}\mu_{k}\mu_{k+1}})\\
&=\theta_{t}(X_{\mu_1\dots\mu_{k}\mu_{k+1}})=\theta_{t}(U).
\end{align*}
In particular we can assume that $U=X_{g_i}$, $g_i\in \{g_1,\dots,g_n\}$ with a loop based at $g_i$. As $C^*(E)$ is purely infinite then \cf{}\cite[Theorem 2.3]{HonSzy}, the graph $E$ satisfies condition (K) (i.e., no vertex $v\in E^0$ lies on a loop, or there are two loops $\beta',\beta''$ based at $v$ such that neither $\beta'$ nor $\beta''$ is an initial subpart of the other, \cf{}\cite{KunPasRae}). In particular there are two distinct finite loops $\beta',\beta''$ based at $g_i$. This implies that $X_{\beta'}\cup X_{\beta''}\subseteq X_{g_i}$, where the union is disjoint, and where $X_{\beta}$ denotes the set of infinite admissible path starting with the finite sequence $\beta$. If we follow the two loops $\beta'$ and $\beta''$ we come back to $g_i$, implying the existence of $t_1,t_2\in G$ such that $\theta_{t_1}(X_{\beta'})=X_{g_i}$ and $\theta_{t_2}(X_{\beta'})=X_{g_i}$. We conclude that $U=X_{g_i}$ is $(G,\tau_X)$-paradoxical.
\end{proof}	
\end{example}

\begin{example}[\cst-algebras of integral domains]
Let $R$ be an integral domain with the property that the quotient $R/(m)$ is finite, for all $m\neq 0$ in $R$.  Set $R^\times:=R\setminus\{0\}$. Following Boava and Exel \cite{BoaExe} we define the \emph{regular \cst-algebra $\mathfrak{A}[R]$ of $R$}  as the universal \cst-algebra generated by isometries $\{s_m \colon m\in R^\times\}$ and unitaries $\{u^n \colon  n\in R\}$ subject to the relations	
$$s_{m}s_{m'}=s_{mm'}, \ u^{n}u^{n'}=u^{n+n'}, \ s_{m}u^n=u^{mn}s_{m}, \ \sum_{l+(m)\in R/(m)}u^ls_m s_m^*u^{-l}=1,$$
for $m,m'\in R^\times$ and $n,n'\in R$.

Following Boava and Exel \cite{BoaExe}, let $K$ denote the field of fractions of $R$,  and $K^\times$ the set $K\backslash\{0\}$. Let $G$ be the semidirect product $K \rtimes K^\times=\{(u,w)\colon u\in K, w\in K^{\times}\}$ equipped with the following operations $(u,w)(u',w')=(u+u'w,ww')$ and $(u,w)^{-1}=(-u/w,1/w)$. As in \cite{BoaExe} we define a partial order on $K^\times$ given by $w\leq w'$ if $w'=wr$ for some $r\in R$. Let $(w)$ denote the ideal $wR\subseteq K$.

Let $X$ be the space of all sequences $(u_w+(w))_{w\in  K^\times}$ in $\prod_{w\in K^\times}(R+(w))/(w)$ fulfilling that $u_{w'}+(w)=u_w+(w)$ if $w\leq w'$. Then in  \cite{BoaExe}, Boava and Exel prove (see Appendix \ref{appendixB} for a more detailed description) that 
$$\ccont(X)\rtimes_{\alpha,r} G \cong\mathfrak{A}[R]\, ,$$ where the partial action $\theta$ of $G$ on $X$ is defined by
\begin{align*}
X_{(u,w)}&=\{(u_{w'}+(w'))_{w'}\in X \colon u_w + (w) = u + (w)\},\\
\theta_{(u,w)} &= (u_{w'}+(w'))_{w'}\mapsto (u + wu_{w^{-1}w'} + (w'))_{w'}.
\end{align*}
Note that the sets $(X_t)_{t\in G}$ are compact, open and form a basis for the topology on $X$, \cf{}\cite{BoaExe}.

\begin{proposition}
If $(\ccont(X),G,\alpha)$ denotes the partial dynamical system described in \cite{BoaExe} and recalled above, then every compact and open subset of $X$ is $(G,\tau_X)$-paradoxical. In particular $\mathcal{O}_\mathbb{Z}\cong\mathfrak{A}[\mathbb{Z}]$ is purely infinite.
\end{proposition}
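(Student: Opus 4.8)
The plan is to reduce the general compact-open set to a canonical basic set $X_t$, then exhibit the paradoxical decomposition of $X_t$ explicitly using the group structure of $K \rtimes K^\times$. First I would observe, exactly as in the previous two examples, that since the sets $(X_t)_{t\in G}$ form a basis of compact open sets for the topology on $X$, any compact open $U$ is a finite disjoint union of such basic sets; since a disjoint union of $(G,\tau_X)$-paradoxical sets is again $(G,\tau_X)$-paradoxical, I may assume $U = X_{(u,w)}$ for a single $t=(u,w)\in G$. Moreover, if $U \subseteq X_{s^{-1}}$ for some $s$, then $U$ is paradoxical iff $\theta_s(U)$ is, so I would translate $U$ to the standard set $X_{(0,1)} = X$ (or to a convenient $X_t$); the real content is therefore to show that $X$ itself admits the required decomposition.

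Next I would produce the paradoxical decomposition of $X$. The key geometric feature is that the partial action simultaneously scales (via the multiplicative part $K^\times$) and translates (via the additive part $K$), precisely the mechanism flagged in the Cantor-set example. The plan is to pick a fixed $m \in R^\times$ with $|R/(m)| = N \geq 2$ (finiteness of the quotient is what makes this work, and $N\geq 2$ is available whenever $R$ is not a field — for $R=\ZZ$ take $m=2$, $N=2$). The element $(0,m)\in G$ acts by the contraction $\theta_{(0,m)}$, and $X = X_{(0,m^{-1})^{-1}}$ decomposes along the coset representatives: one should have $X = \bigsqcup_{l} \theta_{(l,m)}^{-1}$-type pieces, so that the $N$ translated-and-scaled copies $\theta_{(l,m)}(X)$, indexed by coset representatives $l + (m) \in R/(m)$, are pairwise disjoint and each contained in $X$. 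Because $N \geq 2$, I can split the index set $\{l+(m)\}$ into two nonempty blocks and use one copy of $X$ (or two overlapping covers of $X$ by preimages) for each block, yielding sets $V_1,\dots,V_{n+m}\in \tau_X$ with $\bigcup_{i=1}^n V_i = \bigcup_{i=n+1}^{n+m}V_i = X$, each $V_i \subseteq X_{t_i^{-1}}$, with $\theta_{t_i}(V_i)\subseteq X$ and the images pairwise disjoint — exactly Definition \ref{paradox}.

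The main obstacle will be verifying the disjointness condition $\theta_{t_k}(V_k)\cap\theta_{t_l}(V_l)=\emptyset$ for $k\neq l$ together with the covering conditions, and in particular checking carefully that the scaled copies land inside $X$ and that their images partition correctly. Concretely this amounts to unwinding the formula $\theta_{(u,w)} = (u_{w'}+(w'))_{w'}\mapsto (u + w u_{w^{-1}w'} + (w'))_{w'}$ and confirming that for distinct coset representatives $l,l'$ of $R/(m)$ the images $\theta_{(l,m)}(X)$ and $\theta_{(l',m)}(X)$ are disjoint clopen subsets of $X$ whose union (over all representatives) is $X$ — this is the analogue of the partition $X = \bigsqcup_j X_{g_ig_j}$ in the Cuntz--Krieger example. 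The compatibility relations (ii) and (iii) of the partial action, together with \cite[Lemma 1.2]{QuiRae}, should control the domains so that the concatenation $\theta_{t_k}\circ\theta_{\text{scale}}$ behaves as expected.

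Finally, for the last sentence of the statement, once every compact open subset of $X$ is shown $(G,\tau_X)$-paradoxical, I would invoke Theorem \ref{the4.2}: the Boava--Exel action is exact and residually topologically free and $X$ is totally disconnected, so paradoxicality of all compact open sets yields that $\ccont(X)\rtimes_{\alpha,r}G \cong \mathfrak{A}[R]$ is purely infinite. Specialising to $R=\ZZ$, where $\mathfrak{A}[\ZZ]\cong\Oo_\ZZ$, and recording that $|\ZZ/(m)|$ is finite and exceeds $1$ for $m\neq 0,\pm 1$, gives pure infiniteness of $\Oo_\ZZ$. I would therefore close by remarking that the only genuinely $R$-dependent input is the existence of some $m\in R^\times$ with $|R/(m)|\geq 2$, which holds precisely when $R$ is not a field and is guaranteed by the standing finiteness hypothesis on the quotients.
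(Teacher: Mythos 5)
Your proposal is correct, and it runs on the same engine as the paper's proof---reduction to the basic sets $X_{(u,w)}$, the transfer principle ($U$ is paradoxical iff $\theta_s(U)$ is, when $U\subseteq X_{s^{-1}}$), and the coset partition $X=\bigsqcup_{l+(m)\in R/(m)}\theta_{(l,m)}(X)$ coming from the last defining relation of $\mathfrak{A}[R]$---but it is organized differently, and the difference buys you something. The paper never passes through $X$: after writing a compact open $U$ as some $V_{w}^{C_{w}}$ (Boava--Exel's family closed under Boolean operations, which is also what justifies your ``finite disjoint union of basic sets'' step, asserted but not argued in your sketch), it reduces to $U=X_{(u,w)}$ with $u\in R$ and then subdivides $U$ \emph{in place}: it picks $r\in R$ so that $U=\bigcup_{s+(rw)\in C_{rw}}X_{(s,rw)}$ with $|C_{rw}|\geq 2$, and exhibits each piece as $\theta_{t_j}(U)$ via the scaling and translation relations. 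The delicate point there---flagged in the paper's footnote---is that not every non-invertible $r$ works, so $r$ must be chosen with care. Your route, which decomposes $X$ once and for all using a single fixed non-invertible $m$ (where $|R/(m)|\geq 2$ is automatic because $R$ is not a field), sidesteps that subtlety entirely. The price is the step you gloss over with ``or to a convenient $X_t$'': translating a general basic set $X_{(u,w)}$, $w\in K^\times$, onto $X$. A single group element need not do this in general (one would essentially need $R\cap wR$ to be principal, which can fail when $R$ is not a PID); but two applications of the transfer principle do: writing $w=p/q$ with $p,q\in R^\times$, the globally defined $\theta_{(0,q)}$ carries $X_{(u,w)}$ onto $X_{(qu,p)}=X_{(a,p)}$ for some $a\in R$, and then $\theta_{(a,p)^{-1}}$, whose domain is $X_{(a,p)}$ and whose range is $X_{(a,p)^{-1}}=X$, finishes the reduction. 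With that detail supplied your argument is complete, and your concluding appeal to Theorem \ref{the4.2} (exactness of the action because $K\rtimes K^\times$ is solvable, residual topological freeness by \cite[Proposition 4.5]{BoaExe}) is exactly the paper's.
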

\begin{proof}

Let $U$ be any compact and open subset of $X$. For $w\in K^\times$ and $C_w\subseteq (R+(w))/(w)$ set
$$V_{w}^{C_{w}}:=\{(u_{w'}+(w'))_{w'}\in X \colon u_w + (w)\in C_w\}.$$
Boava and Exel showed in \cite{BoaExe} that the family of sets $(V_{w}^{C_{w}})_{w\in  K^\times}$ is closed under complement, intersections and finite unions. It follows that $U=V_{w}^{C_{w}}$ for some $w\in K^\times$ and $C_w\subseteq (R+(w))/(w)$.  Since a disjoint union of compact open $(G,\tau_X)$-paradoxical sets is again $(G,\tau_X)$-paradoxical we can assume that $C_w$ contains precisely one element, \ie$U=X_t$ for some $t=(u,w)\in G$. Since $X_{(u,w)}=\emptyset \Leftrightarrow u\notin R+(w)$, \cf{}\cite[Proposition 4.4]{BoaExe}, we can assume that $u\in R+(w)$. Using that $X_{(u,w)}=X_{(u+w,w)}$ we can assume $u\in R$. Since $C_w\neq \emptyset$ it follows from an argument prior \cite[Proposition 4.5]{BoaExe} that $V_{w}^{C_{w}}=V_{wr}^{C_{wr}}$, with $C_{wr}$ containing at last two elements for some\footnote{In \cite{BoaExe} it is stated that any non-invertible $r$ in $R$ will work, which is not true.} $r$ in $R$. In particular
$$U= \bigcup_{s+(rw)\in C_{rw}} X_{(s,rw)}, \ \ \ |C_{rw}|>1.$$
Using the relations (with $s\in  R^\times$)
\begin{align*}
	 X_{(0,s)^{-1}}&=X & \theta_{(0,s)}(X_{(u,w)})&=X_{(su,sw)},\\
	 X_{(s,1)^{-1}}&=X & \theta_{(s,1)}(X_{(u,w)})&=X_{(s+u,w)}.
\end{align*}
it follows that there exist a finite number of elements $t_1,\dots, t_n$ in $G$ and a finite number of open, pairwise disjoint subsets $U_1, \dots, U_n$ of $U$ such that
$$U= \bigcup_{i\in \{1,\dots,n\}}U_n, \ \ \ |n|>1,\ \ \ \theta_{t_j}(U)=U_j, \ \ \ j\in \{1,\dots , n\}.$$
This shows that $U$ is $(G,\tau_X)$-paradoxical. Since $G$ is solvable both the group $G$ and the action $\alpha$ is exact. It was shown in \cite[Proposition 4.5]{BoaExe} that the action is {residually topologically} free. By Theorem \ref{the4.2} $\mathcal{O}_\mathbb{Z}\cong\mathfrak{A}[\mathbb{Z}]$ is purely infinite.
\end{proof}

\begin{corollary}
Let $(\ccont(X),G,\alpha)$ be the partial dynamical system as above with $R = \mathbb{Z}$ and $G=\QQ \rtimes \QQ^\times$.  Then 
every compact and open subset of $X$ is $(H,\tau_X)$-paradoxical, with $H=\QQ\rtimes \QQ^\times_+$. In particular $\mathcal{O}_\mathbb{N}\cong \ccont(X)\rtimes_{\alpha,r} H$ is purely infinite.
\end{corollary}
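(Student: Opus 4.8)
The plan is to mirror the proof of the preceding Proposition, but to check that the paradoxical decomposition of \emph{every} compact and open set can be carried out using only group elements of the positive subgroup $H=\QQ\rtimes\QQ^\times_+$; the negative scalings available in $G$ (the ``flip'' $(0,-1)$ and its translates) are never needed. As in the Proposition, the family $(V_w^{C_w})_w$ is closed under complements, intersections and finite unions, so every compact open $U\subseteq X$ is a finite disjoint union of basic sets $X_{(u,w)}$. Since a disjoint union of $(H,\tau_X)$-paradoxical compact opens is again $(H,\tau_X)$-paradoxical, and since $(H,\tau_X)$-paradoxicality is preserved by $\theta_s$ for $s\in H$ (if $U\subseteq X_{s^{-1}}$ then $U$ is $(H,\tau_X)$-paradoxical iff $\theta_s(U)$ is, by conjugating the witnessing data by $s$), it suffices to treat a single $X_{(u,w)}$, and in fact to reduce it to the full space $X=X_{(0,1)}$.

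First I would move $X_{(u,w)}$ into the $H$-orbit of $X$. Because $(w)=(|w|)$ as ideals of $R=\ZZ$, we may assume $w>0$; because $X_{(u,w)}=X_{(u+w,w)}$ and $u\in R+(w)=\ZZ+w\ZZ$, we may replace $u$ by an integer $a$. The integer translation $(-a,1)\in H$ gives $\theta_{(-a,1)}(X_{(a,w)})=X_{(0,w)}$. Writing $w=p/q$ in lowest terms with $p,q\in\ZZ_{>0}$, the positive scaling $(0,q)\in H$ gives $\theta_{(0,q)}(X_{(0,w)})=X_{(0,p)}$, and the positive scaling $(0,1/p)\in H$ gives $\theta_{(0,1/p)}(X_{(0,p)})=X$ (its domain $X_{(0,1/p)^{-1}}=X_{(0,p)}$ equals $\theta_{(0,p)}(X)$, so $\theta_{(0,1/p)}=\theta_{(0,p)}^{-1}$ carries $X_{(0,p)}$ back onto $X$). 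Every element used---$(-a,1)$, $(0,q)$, $(0,1/p)$---lies in $H$, since all multiplicative parts are positive rationals and all additive parts are integers.

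It then remains to show that $X$ itself is $(H,\tau_X)$-paradoxical, and this is where the positivity becomes visible: $X$ self-embeds through positive integer scalings. Fix any integer $n\ge 2$. Scaling by $n$ gives $\theta_{(0,n)}(X)=X_{(0,n)}$, and composing with the unit translation gives $\theta_{(1,n)}(X)=X_{(1,n)}$, where $(0,n),(1,n)\in H$ and $X_{(0,n)}\cap X_{(1,n)}=\emptyset$ because $0+(n)\neq 1+(n)$ in $\ZZ/n\ZZ$. Taking both of the two required covers of $V=X$ to be the single set $X$, with $t_1=(0,n)$ and $t_2=(1,n)$, the images $\theta_{t_1}(X)=X_{(0,n)}$ and $\theta_{t_2}(X)=X_{(1,n)}$ are disjoint subsets of $X$, and $X_{t_1^{-1}}=X_{t_2^{-1}}=X$; this exhibits $X$ as $(H,\tau_X)$-paradoxical. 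Combined with the reductions above, every compact and open subset of $X$ is $(H,\tau_X)$-paradoxical.

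Finally, for pure infiniteness I would apply Theorem \ref{the4.2} to the restricted system $(\ccont(X),H,\alpha)$: the space $X$ is totally disconnected as before; the group $H=\QQ\rtimes\QQ^\times_+$ is metabelian, hence solvable, so both $H$ and the restricted action are exact; and every compact open subset is $(H,\tau_X)$-paradoxical by the above. The main technical point for invoking Theorem \ref{the4.2} is residual topological freeness of the $H$-action. Topological freeness on all of $X$ is inherited immediately, since the relevant union of fixed-point sets only shrinks when the finite set $F$ is restricted to $H\setminus\{e\}\subseteq G\setminus\{e\}$; the residual version, however, must be checked on quotients by $H$-invariant ideals, which need not be $G$-invariant, and this is the one place where the restriction to $H$ does not come for free. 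I would verify it by the same fixed-point analysis as in \cite[Proposition 4.5]{BoaExe}, which applies verbatim to the elements of $H$. Theorem \ref{the4.2} then yields that $\ccont(X)\rtimes_{\alpha,r}H\cong\mathcal{O}_\NN$ is purely infinite.
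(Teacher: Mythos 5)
Your route is essentially the intended one: the paper gives no explicit argument for this corollary, leaving implicit that the proof of the preceding proposition can be run entirely inside $H$ when $R=\ZZ$. Your write-up is in fact cleaner and more explicit than that implicit proof: where the paper rewrites $U=V_{w}^{C_w}$ as $V_{rw}^{C_{rw}}$ with $|C_{rw}|\geq 2$ and maps $U$ onto the pieces, you reduce every basic set all the way down to $X$ and then double $X$ inside itself with $(0,n)$ and $(1,n)$; the witnessing data you give for $X$ (both covers equal to $\{X\}$, images $X_{(0,n)}$ and $X_{(1,n)}$ disjoint) is exactly what Definition \ref{paradox} requires, and all elements lie in $H$. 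You also correctly isolate the one hypothesis of Theorem \ref{the4.2} that does not restrict for free, namely residual topological freeness of the $H$-system ($H$-invariant ideals form a strictly larger class than $G$-invariant ones). For $R=\ZZ$ this can be settled even more simply than by redoing the fixed-point analysis: integer translations $(a,1)$ lie in $H$ and are globally defined on $X$, so every $H$-orbit meets every basic set $X_{(u,p)}$, the $H$-action is minimal, and residual topological freeness collapses to topological freeness, which is inherited from $G$ by restriction.

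There is, however, one intermediate identity that is false as stated: $\theta_{(0,q)}(X_{(0,w)})=X_{(0,p)}$ for $w=p/q$ in lowest terms. The range of the partial homeomorphism $\theta_{(0,q)}$ is $X_{(0,q)}$, so its image can never meet the complement of $X_{(0,q)}$; since $\gcd(p,q)=1$ and $q>1$ force $X_{(0,p)}\nsubseteq X_{(0,q)}$, the claimed equality cannot hold (with $w=1/2$, $q=2$ it would assert that $\theta_{(0,2)}$ maps a subset of $X$ onto all of $X$). You inherited this from the relation $\theta_{(0,s)}(X_{(u,w)})=X_{(su,sw)}$ as quoted in the paper's proposition proof, which is itself only an inclusion in general; the correct statement is $\theta_{(0,s)}(X_{(u,w)})=X_{(su,sw)}\cap X_{(0,s)}$, with equality when, e.g., $u,w\in R$. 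The error is harmless for your argument and admits a one-line repair: either note that the coherence condition defining $X$ gives $X_{(0,p/q)}=X_{(0,p)}$ outright (the condition $x_{p/q}\in(p/q)\ZZ$ is equivalent to $x_p\in(p/q)\ZZ\cap\ZZ=p\ZZ$ when $\gcd(p,q)=1$), so the scaling step can simply be deleted; or keep the scaling, observe that the true image is $X_{(0,pq)}$, and finish with $\theta_{(0,1/(pq))}\in H$ instead of $\theta_{(0,1/p)}$. With that repair the proof is complete.
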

\end{example}

\section{Connection to crossed products}
\label{section5}
Abadie showed in \cite{Aba} that certain partial crossed products are Morita-Rieffel equivalent to ordinary crossed products. Since pure infiniteness is preserved under stable isomorphism (see \cite{KirRor}) one might also expect that the mentioned Morita-Rieffel equivalence maps paradoxical sets in the realm of partial actions into paradoxical sets in the realm of ordinary action. This is precisely the case.

Let us first recall the result of Abadie in \cite{Aba}. 

\begin{definition}
Let $(\czero(X),G,\alpha)$ be a partial dynamical system, together with the corresponding collection of homeomorphisms $(\newfunk{\theta_t}{X_{t^{-1}}}{X_t})_{t \in G}$, 
inducing $\alpha$. The envelope space, denoted by $X^e$, is the topological quotient space $\frac{(G \times X)}{\sim}$, where $\sim$ is the equivalence relation given by 
$$(r,x)\sim (s,y) \Leftrightarrow x\in X_{r^{-1}s} \text{ and } \theta_{s^{-1}r}(x)=y.$$ 
The {\em envelope action}, denoted by $h^e$, is the action induced in $X^e$ by 
the action $h^e_s(t,x)\mapsto(st,x)$.
\end{definition}

\begin{theorem}[Abadie \cite{Aba}]%
Let $(\czero(X),G,\alpha)$ be a partial dynamical system such that $X^e$ is Hausdorff. Let $\alpha_e$ denote the action of $G$ on $\czero(X^e)$ induced by the envelope action. Then $\czero(X)\rtimes_{\alpha,r} G$ is Morita-Rieffel equivalent to $\czero(X_e)\rtimes_{\alpha_e,r} G$.
\end{theorem}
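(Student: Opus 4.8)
The plan is to realise $\czero(X)\rtimes_{\alpha,r}G$ as a \emph{full corner} of the ordinary reduced crossed product $\czero(X^e)\rtimes_{\alpha_e,r}G$ and then invoke the standard fact that a full corner $pBp$ of a \cst-algebra $B$ is Morita-Rieffel equivalent to $B$ (the equivalence bimodule being $Bp$). The geometric input is that $X$ sits inside $X^e$ as an open set: the map $x\mapsto[(e,x)]$ is a homeomorphism onto an open subset, the global action $h^e$ restricts to the partial action via $X_{t^{-1}}=X\cap h^e_{t^{-1}}(X)$ and $\theta_t=h^e_t|_{X_{t^{-1}}}$, and the orbit of $X$ exhausts the envelope, i.e.\ $\bigcup_{t\in G}h^e_t(X)=X^e$. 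The Hausdorff hypothesis on $X^e$ is precisely what makes $\czero(X^e)$ a genuine commutative \cst-algebra on which $G$ acts by honest automorphisms, so that $\alpha_e$ and $\czero(X^e)\rtimes_{\alpha_e,r}G$ are well defined. Since $X$ is open, $\czero(X)$ is an ideal of $\czero(X^e)$ and $1_X$ is a projection in $M(\czero(X^e))=\ccont_b(X^e)$, hence a projection in $M(\czero(X^e)\rtimes_{\alpha_e,r}G)$.

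First I would compute the corner $1_X\,(\czero(X^e)\rtimes_{\alpha_e,r}G)\,1_X$ fibre by fibre. Because $\delta_t\,1_X=\alpha_{e,t}(1_X)\,\delta_t=1_{h^e_t(X)}\,\delta_t$, conjugating the $t$-th spectral subspace $\czero(X^e)\delta_t$ by $1_X$ leaves only the functions supported on $X\cap h^e_t(X)=X_t$, so
$$1_X\,\big(\czero(X^e)\,\delta_t\big)\,1_X=\czero(X\cap h^e_t(X))\,\delta_t=\czero(X_t)\,\delta_t=\mcl{D}_t\,\delta_t=B_t.$$
Thus the corner contains the \cst-subalgebra generated by the same fibres $(B_t)_{t\in G}$ that define the partial crossed product, and since the product and involution on these fibres are in both cases built from $\alpha_t=h^e_t|_{X_{t^{-1}}}$, the dense $^*$-algebra $\mcl{L}$ of finite sums $\sum_t a_t\delta_t$ sits $^*$-isomorphically in both. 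Completing, I would use that the canonical conditional expectation $E_{\czero(X^e)}$ restricts on the corner to the partial-crossed-product expectation onto $\czero(X)$; as this restriction is faithful, the reduced norm on the corner agrees with the reduced partial-crossed-product norm, giving $1_X\,(\czero(X^e)\rtimes_{\alpha_e,r}G)\,1_X\cong\czero(X)\rtimes_{\alpha,r}G$.

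Next I would check that $1_X$ is full, so that the corner is Morita-Rieffel equivalent to the whole algebra. The ideal of $\czero(X^e)\rtimes_{\alpha_e,r}G$ generated by $1_X$ contains $\delta_t\,1_X\,\delta_t^*=\alpha_{e,t}(1_X)=1_{h^e_t(X)}$ for every $t$, hence contains $\czero(h^e_t(X))$; since $\bigcup_t h^e_t(X)=X^e$ these ideals span a dense subset of $\czero(X^e)$, so the generated ideal contains all of $\czero(X^e)$ and therefore, absorbing the multipliers $\delta_t$, equals the whole crossed product. Combining the corner isomorphism with fullness yields the desired equivalence $\czero(X)\rtimes_{\alpha,r}G\sim\czero(X^e)\rtimes_{\alpha_e,r}G$.

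I expect the main obstacle to be the non-compact case. When $X$ and $X^e$ are merely locally compact, $1_X$ lives only in the multiplier algebra, so both the fibrewise corner computation and the fullness argument must be phrased through approximate units and carried out at the multiplier level; equivalently, one reformulates the statement as ``$\czero(X)$ is a $G$-full ideal of $\czero(X^e)$'' and appeals to the corresponding full-ideal Morita equivalence for reduced crossed products. The genuinely delicate point hidden in the completion step is to confirm that forming the corner is compatible with the \emph{reduced} (rather than merely the full) norm; the faithful-conditional-expectation argument above is the natural way to settle it, but making it rigorous for non-unital $\czero(X^e)$ is where the real work lies.
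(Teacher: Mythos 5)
You should first note that the paper itself offers no proof of this statement---it is quoted verbatim from Abadie \cite{Aba}---so the relevant comparison is with Abadie's original argument, whose overall shape (embed the partial crossed product as a full hereditary piece of the enveloping crossed product, then invoke Morita--Rieffel equivalence) your proposal correctly reproduces. However, your implementation has a genuine gap at its very first step: the claim that, since $X$ is open in $X^e$, the function $1_X$ is a projection in $M(\czero(X^e))=\ccont_b(X^e)$. The indicator function of an open set is continuous if and only if the set is \emph{clopen}, and Hausdorffness of $X^e$ does not force the open copy of $X$ to be closed in $X^e$. Concretely, let $G=\ZZ$ act on $X=(0,2)$ by the partial translations $\theta_n(x)=x+n$ with domains $X_{-n}=(0,2)\cap\big((0,2)-n\big)$; this is the restriction of the translation action on $\RR$ to the open set $(0,2)$, whose translates cover $\RR$, so the enveloping system is $\ZZ$ acting on $X^e=\RR$, which is Hausdorff---yet $(0,2)$ is not closed in $\RR$, so $1_X\notin\ccont_b(\RR)$ and $1_X$ is not a multiplier of $\czero(X^e)$, nor of $\czero(X^e)\rtimes_{\alpha_e,r}G$. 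Consequently the corner $1_X(\czero(X^e)\rtimes_{\alpha_e,r}G)1_X$, the identity $\delta_t 1_X=1_{h^e_t(X)}\delta_t$, and your fullness computation are all undefined as written. Your closing paragraph senses trouble in the non-compact case but misdiagnoses it: you say $1_X$ ``lives only in the multiplier algebra,'' when in fact it need not live there at all. (When $X$ is compact your argument is fine, since a compact open subset of a Hausdorff space is clopen.)

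The repair is the reformulation you gesture at in your last sentence, and it is exactly what Abadie does. Set $B:=\czero(X^e)\rtimes_{\alpha_e,r}G$ and replace the corner by the hereditary sub-\cst-algebra $H:=\overline{\czero(X)\,B\,\czero(X)}$ (equivalently, the corner by the \emph{open} projection $1_X\in\czero(X^e)^{**}$). Your fibrewise computation survives in this form: from $f\delta_t\,g=f\,\alpha_{e,t}(g)\,\delta_t$ one gets $\overline{\czero(X)\,\czero(X^e)\delta_t\,\czero(X)}=\czero\big(X\cap h^e_t(X)\big)\delta_t=\mcl{D}_t\delta_t$, so $H$ is the closed span of the fibres $\mcl{D}_t\delta_t$; the canonical expectation of $B$ restricts to a faithful expectation of $H$ onto $\czero(X)$ killing the fibres with $t\neq e$, and your appeal to the uniqueness criterion for topologically graded algebras with faithful expectation then identifies $H$ with $\czero(X)\rtimes_{\alpha,r}G$. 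Fullness also survives: $\czero(X)=\overline{\czero(X)^3}\subseteq H$, the canonical unitary multipliers $\delta_t$ of $B$ (available because $\alpha_e$ is a \emph{global} action) conjugate $\czero(X)$ onto $\czero(h^e_t(X))$, and since $\bigcup_t h^e_t(X)=X^e$ the ideal generated by $H$ contains $\czero(X^e)$, hence equals $B$. Finally, a full hereditary sub-\cst-algebra is Morita--Rieffel equivalent to the ambient algebra via the imprimitivity bimodule $\overline{B\,\czero(X)}$, with no unitality or $\sigma$-unitality assumptions. With these substitutions your proof becomes correct and coincides with the argument in \cite{Aba}.
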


We then have:

\begin{theorem}
\label{thed1}	
Let $(\czero(X),G,\alpha)$ be a partial dynamical system such that $X^e$ Hausdorff. If every compact open subset of $X$ is $(G,\tau_X)$-paradoxical, then every compact open subset of $X^e$ is $(G,\tau_{X^e})$-paradoxical.
\end{theorem}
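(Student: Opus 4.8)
The plan is to transport a paradoxical decomposition from $X$ to its envelope $X^e$ through the canonical embedding of $X$, exploiting that the envelope action $h^e$ is a \emph{global} action. First I would record the basic globalization facts underlying \cite{Aba}: the map $\iota\colon X\to X^e$, $\iota(x)=[(e,x)]$, is a homeomorphism onto an open subset of $X^e$; one has $X^e=\bigcup_{t\in G}h^e_t(\iota(X))$; and each $h^e_t$ intertwines the partial and global actions in the sense that $h^e_t(\iota(x))=\iota(\theta_t(x))$ whenever $x\in X_{t^{-1}}$, so that $h^e_t(\iota(X_{t^{-1}}))=\iota(X_t)$. I would then make two elementary observations. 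Since $X^e$ is Hausdorff, every compact subset is closed, so a \emph{compact open} subset of $X^e$ is in fact clopen; this is what will make the disjointification below possible. And because $h^e$ is global, the class of $(G,\tau_{X^e})$-paradoxical sets is invariant under each $h^e_s$: one conjugates the implementing group elements by $s$ and translates the pieces by $h^e_s$. I would also use the fact, invoked throughout Section \ref{section4}, that a finite disjoint union of compact open $(G,\tau)$-paradoxical sets is again $(G,\tau)$-paradoxical.

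The reduction step comes next. Let $W\subseteq X^e$ be compact open. By compactness it is covered by finitely many translates $h^e_{s_1}(\iota(X)),\dots,h^e_{s_k}(\iota(X))$. Intersecting with $W$ and using that these sets are clopen, I would disjointify by setting $D_1=W\cap h^e_{s_1}(\iota(X))$ and $D_j=\big(W\cap h^e_{s_j}(\iota(X))\big)\setminus\bigcup_{i<j}h^e_{s_i}(\iota(X))$, obtaining pairwise disjoint compact open sets $D_1,\dots,D_k$ with $W=\bigcup_j D_j$ (a disjoint union) and $D_j\subseteq h^e_{s_j}(\iota(X))$. By the disjoint-union fact it suffices to show each $D_j$ is $(G,\tau_{X^e})$-paradoxical, and by the invariance of paradoxicality under $h^e_{s_j^{-1}}$ it is enough to treat $h^e_{s_j^{-1}}(D_j)\subseteq\iota(X)$. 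Thus the entire problem reduces to showing that every compact open $W\subseteq\iota(X)$ is paradoxical.

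For such a $W$ I would set $V:=\iota^{-1}(W)$, a compact open subset of $X$, and apply the hypothesis to produce open sets $V_1,\dots,V_{n+m}\subseteq X$ and $t_1,\dots,t_{n+m}\in G$ with $\bigcup_{i=1}^n V_i=\bigcup_{i=n+1}^{n+m}V_i=V$, $V_i\subseteq X_{t_i^{-1}}$, $\theta_{t_i}(V_i)\subseteq V$, and $\theta_{t_k}(V_k)\cap\theta_{t_l}(V_l)=\emptyset$ for $k\neq l$. Transporting through $\iota$, I would take $W_i:=\iota(V_i)$ (open in $X^e$ since $\iota$ is an open map) and reuse the same $t_i$. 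The two covering identities follow by applying $\iota$ to the corresponding identities for $V$; the domain condition $W_i\subseteq X^e_{t_i^{-1}}$ is automatic because $h^e$ is global, so every $X^e_t$ equals $X^e$; and the last two conditions follow from the intertwining relation, since $h^e_{t_i}(W_i)=\iota(\theta_{t_i}(V_i))\subseteq\iota(V)=W$ and $h^e_{t_k}(W_k)\cap h^e_{t_l}(W_l)=\iota\big(\theta_{t_k}(V_k)\cap\theta_{t_l}(V_l)\big)=\emptyset$. This exhibits $W$ as $(G,\tau_{X^e})$-paradoxical and, combined with the reduction, proves the theorem.

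I expect the main obstacle to be one of bookkeeping rather than depth: correctly pinning down the globalization dictionary---openness of $\iota(X)$, the orbit cover $X^e=\bigcup_t h^e_t(\iota(X))$, and especially the intertwining identity $h^e_t\circ\iota=\iota\circ\theta_t$ on $X_{t^{-1}}$---so that all four paradoxicality conditions transport without loss, and justifying the disjointification by the clopen-ness of compact open sets in the Hausdorff space $X^e$. Once these structural facts are isolated, the verification of the four conditions is routine.
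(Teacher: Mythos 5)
Your overall strategy is the same as the paper's: cover a compact open $W\subseteq X^e$ by finitely many translates of the canonical copy of $X$, disjointify, translate each piece back into $\iota(X)$, and transport the paradoxical decomposition of $\iota^{-1}(\cdot)$ through $\iota$, using that the envelope action is global so the domain conditions are vacuous. The transport step and the reduction-by-translation step are correct and match the paper's step (iii). However, there is a genuine gap in your disjointification, and it sits exactly where the Hausdorff hypothesis has to do real work.

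Your sets $D_j=\bigl(W\cap h^e_{s_j}(\iota(X))\bigr)\setminus\bigcup_{i<j}h^e_{s_i}(\iota(X))$ are open only if the subtracted sets $h^e_{s_i}(\iota(X))$ are \emph{closed}, and they are compact only if $h^e_{s_j}(\iota(X))$ is closed as well; so you need these translates to be clopen in $X^e$. The justification you offer --- that compact open subsets of a Hausdorff space are clopen --- does not give this: the sets $h^e_{s_i}(\iota(X))$ are open but not compact (neither is $W\cap h^e_{s_i}(\iota(X))$, being a compact set intersected with an open one), so nothing you have recorded makes them closed. And without clopenness of the covering sets, disjointifying an open cover of a compact set into open pieces is impossible in general (a circle covered by two arcs already defeats it). The missing ingredient is the fact the paper invokes from \cite[Proposition 3.1]{ExeGioGon}: when $X^e$ is Hausdorff, each domain $X_t$ is clopen in $X$; from this one computes that the preimage under the quotient map $\rho\colon G\times X\to X^e$ of the complement of $h^e_t(\iota(X))=\frac{\{t\}\times X}{\sim}$ is $\bigcup_{s\in G}\{s\}\times X_{s^{-1}t}^c$, which is open, so each translate $h^e_t(\iota(X))$ is indeed clopen in $X^e$. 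Once you insert this lemma, your $D_j$ are compact open, $\iota^{-1}\bigl(h^e_{s_j^{-1}}(D_j)\bigr)$ is compact open in $X$, and the rest of your argument goes through exactly as in the paper.
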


\begin{proof}
Let $U^e$ be a compact open subset of $X^e$. Find a cover of $U^e$ consisting of open subsets $U^e$ of the form $\frac{\{t\}\times U_t}{\sim}$, with $U_t\in \tau_X$ and $t\in G$.	By compactness we can assume the cover is finite. Moreover, we can assume that

{(i) The union is disjoint:} Let $\rho$ denote the canonical surjection $G \times X\mapsto X^e$, and let $X_t^c$, $t\in G$, denote the complement of $X_t$ in $X$. Fix $t\in G$. For each $s\in G$ and $x\in X_{t^{-1}s}$ we have that $(t,x)\sim (s,\theta_{s^{-1}t}(x))$. In particular $\rho^{-1}(\frac{\{t\}\times X}{\sim})=\bigcup_{s\in G}\{s\}\times \theta_{s^{-1}t}(X_{t^{-1}s})$ is open in $X$, implying that $\frac{\{t\}\times X}{\sim}$ is open in $X^e$. Since $X^e$ is Hausdorff the sets $(X_t)_{t\in G}$ are clopen in $X$, \cf{}\cite[Proposition 3.1]{ExeGioGon}. Hence $\bigcup_{s\in G}\{s\}\times X_{s^{-1}t}^c$ is open in $X$, and $\frac{\{t\}\times X}{\sim}$ is therefore clopen in $X^e$. Since $X^e=\bigcup_{t\in G}\frac{\{t\}\times X}{\sim}$ se can, by compactness, cover $U^e$ by finitely many clopen set of the form $(\frac{\{t\}\times X}{\sim})_{t\in F}$. By passing to subsets of the sets $(\frac{\{t\}\times X}{\sim})_{t\in F}$ we obtain a partition $(V_t^e)_{t\in F}$ of $\frac{F\times X}{\sim}$ of clopen subsets of $X_e$ fulfilling that $V_t^e\subseteq \frac{\{t\}\times X}{\sim}$. With $U_t^e:=V_t^e\cap U^e$ we have that $U^e=\bigcup_{t\in F}U_t^e$, where the union is disjoint. For each $f\in F$ define $U_t:=\{x\in X\colon \rho_t(x)\in U_t^e\}\in \tau_X$, where $\newfunk{\rho_t}{X}{X^e}$ denotes the (continues) composition of $x\mapsto (t,x)$ and $\rho$. It follows easily that $U_t^e=\frac{\{t\}\times U_t}{\sim}$.

{(ii) Each set $U_t$ is compact:} Fix $t\in F$. Let $(V_i)_{i}$ be a cover of $U_t$ consisting of open subsets of $X$. Notice that each $V_i^e:=\frac{\{t\}\times V_i}{\sim}$ is open in $X^e$ (since $\bigcup_{s\in G}\{s\}\times \theta_{s^{-1}t}(V_i\cap X_{t^{-1}s})$ is open in $X$). The sets $(V_i^e\cap V_t^e)_{i}$ is a cover of $U_t^e$ consisting of open subsets of $X^e$ and, together with $(V_s^e)_{s\in F\setminus \{t\}}$, a cover of $U^e$. By compactness we can assume $i=1,\dots,m$ for some $m\in \NN$. Intersecting with $V_t^e$ we have that $U_t^e\subseteq \bigcup_{i=1}^m V_i^e\cap V_t^e \subseteq \bigcup_{i=1}^m V_i^e$. It follows that $U_t\subseteq \bigcup_{i=1}^m V_i$. Hence $U_t$ is compact.

{(iii) $U^e$ is paradoxical:}
Each set of the form $\frac{\{e\}\times U}{\sim}$, with $U$ compact and open in $X$, is $(G,\tau_{X^e})$-paradoxical: If $(V_i,t_i)_{i=1}^{n+m}$ witness the paradoxicality of $U$ and $k\neq l$ are any natural numbers in ${1,\dots,n+m}$ if follows that $\frac{\{e\}\times V_k}{\sim}\cap \frac{\{e\}\times V_l}{\sim}=\emptyset$ because $V_k\cap V_l=\emptyset$. We obtain that $(\frac{\{e\}\times V_i}{\sim},t_i)_{i=1}^{n+m}$ witness the paradoxicality of $\frac{\{e\}\times U}{\sim}$. A translate of a $(G,\tau_{X^e})$-paradoxical set is again $(G,\tau_{X^e})$-paradoxical. Hence each $\frac{\{t\}\times U_t}{\sim}$ is $(G,\tau_{X^e})$-paradoxical. A finite disjoint union of $(G,\tau_{X^e})$-paradoxical set is again $(G,\tau_{X^e})$-paradoxical. We conclude that $U^e$ is $(G,\tau_{X^e})$-paradoxical. 
\end{proof}

\begin{remark}
In a very interesting recent preprint \cite{KelMonRor}, Kellerhals, Monod, and R{\o}rdam proved that a countable group is non-supramenable if and only if it admits a free, minimal, purely infinite action on the locally compact non-compact Cantor set. Then they use this characterisation to associate to such dynamical systems stable Kirchberg C*-algebras in the UCT class.

Based on the examples presented in Section \ref{examples}, where
the opens sets $(X_t)_{t\in G}$ we used to construct the partial crossed products were clopen (ensuring $X^e$ is Hausdorff, see \cite[Proposition 3.1]{ExeGioGon}), and Proposition 6.3, we obtain another class of examples of dynamical systems whose associated crossed products are stable Kirchberg C*-algebras.
\end{remark}

\appendix
\section{}
\label{partail}
\subsection{Basic definitions} 
\label{appendixA1}
Recall that a \emph{Fell bundle} over a discrete group $G$ is a collection $\BB = (B_t)_{t\in G}$ of closed subspaces of a \cst-algebra $\mcl{B}$, indexed by a discrete group $G$, satisfying $B_t^* = B_t$ and $B_t B_s \subseteq B_{ts}$ for all $t$ and $s$ in $G$ (\cf\cite{Exe}). A \emph{section} of $\BB$ is a function $\newfunk{\xi}{G}{\bigcup_{t\in G}B_t}$ with the property that $\xi(t)\in B_t$ for all $t\in B_t$. Let $l_1(\BB)$ denote the Banach $^*$-algebra (\cf\cite[Proposition 2.1]{McClan}) consisting of all sections $\xi$ of $\BB$ with a finite $l_1$-norm.  The operations and the norm in $l_1(\BB)$ are given by
$$\xi\eta(t)=\sum_{s\in G} \xi(s)\eta(s^{-1}t), \ \ \ \xi^*(t)=\xi(t^{-1})^*,$$
$$\|\xi\|_1=\sum_{t\in G}\|\xi(t)\|.$$
We define the \emph{full cross sectional algebra} of $\BB$, denoted $\ccont^*(\BB)$, to be the enveloping \cst-algebra of $l_1(\BB)$ (\cf\cite[Section VIII.17.2]{FelDor}). Recall that a \emph{right Hilbert $A$-module} $X$ is a right $A$-module $X$ equipped with a map $\langle \cdot, \cdot \rangle_A : X \times X \to A$ that is linear in the second component and for $x,y \in X$, $a \in A$,
\begin{enumerate}[(i)]
\item $\langle x,x \rangle_A \geq 0$ with equality only if $x = 0$;
\item \label{A-linear in the second variable} $\langle x,y \cdot a\rangle_A=\langle x,y\rangle_Aa$;
\item\label{it:adjoint of ip}$\langle x,y\rangle_A = \langle y,x\rangle_A^*$; and
\item $X$ is complete in the norm defined by $\|x\|_A ^2 = \|\langle x, x\rangle_A\|$.
\end{enumerate}
As in \cite[Section 1.1.7]{JenTho}, let $l_2(\BB)$ denote the right Hilbert $B_e$-module consisting of all cross sections $\xi$ of $\BB$ fulfilling that the series $x=\sum_{t\in G} \xi(t)^* \xi(t)$ converges unconditionally (\ie{}for any $\varepsilon>0$ there exist a finite set $F\subseteq G$ such that for every finite subsets $H\subseteq G$ containing $F$ one has that $\|x-\sum_{t\in H} \xi(t)^* \xi(t)\|<\varepsilon$). We equip $l_2(\BB)$ with the inner product
$$
\langle\xi,\eta\rangle = \sum_{t\in G} \xi(t)^* \eta(t), \ \ \ \xi,\eta\in l_2(\BB).
$$
Let $\Ll(l_2(\BB))$ denote the \cst-algebra of all adjointable operators on $l_2(\BB)$ (\ie{}linear maps $\newfunk{T}{l_2(\BB)}{l_2(\BB)}$ with a linear map $T^*$ such that $\langle\xi,T\eta\rangle = \langle T^*\xi,\eta\rangle$, for all $\xi,\eta\in l_2(\BB)$). With the $^*$-homomorphism $\newfunk{\Lambda_\BB}{l_1(\BB)}{\Ll(l_2(\BB))}$ defined (in \cite[Proposition 2.6]{ExeNg}) by
$$
(\Lambda_\BB(\xi)\eta)(t) = \sum_{s\in G}\xi(s)\eta(s^{-1}t), \ \ \ \xi\in l_1(\BB), \ \eta\in l_2(\BB), \ t\in G,
$$
we define the \emph{reduced cross sectional algebra} of $\BB$, denoted $\ccont_r^*(\BB)$, to be the sub-\cst-algebra of $\Ll(l_2(\BB))$ generated by the range of $\Lambda_\BB$. Let $\delta_{s,t}$, $s,t\in G$ denote the Kronecker symbol. Each $B_t$ is a right Hilbert $B_e$-module with the inner product $\langle b,c \rangle = b^*c$. Let $j_t\in \Ll(B_t,l_2(\BB))$ denote the adjointable operator defined by $(j_t(b_t))(s)=\delta_{s,t}b_t$, for $b_t\in B_t$ and $s\in G$. The adjoint $j_t^*\in \Ll(l_2(\BB),B_t)$ is simply the map $j_t^*(\xi)=\xi(t)$, $\xi\in l_2(\BB)$. Recall, \cf\cite[Definition 2.7]{Exe}, that for $x\in\ccont_r^*(\BB)$ and $t\in G$ the \emph{$t^{\rm th}$ Fourier coefficient} of $x$ is the unique element $\hat{x}(t)\in B_t$ such that $(j_t^*\circ x \circ j_e)(a)=\hat{x}(t)a$ for all $a\in B_e$. The map $\newfunk{E}{\ccont_r^*(\BB)}{B_e}$, given by $x\mapsto \hat{x}(e)$ is a positive, contractive, conditional expectation. Moreover, $E$ is faithful on positive elements since $E(x^*x)=\sum_{t\in G}\hat{x}(t)^*\hat{x}(t)$ (unconditional convergence) for each $x\in \ccont_r^*(\BB)$ (\cf\cite[Proposition 2.12]{Exe}). Let $\ccomp(\BB)\subseteq l_1(\BB)$ denote the set of finitely supported sections of $\BB$. 

Let $\mcl{A}$ be a \cst-algebra and $G$ be a discrete group. Recall (see \cite{DokExe}) that a {\em partial action of $G$ on $\mcl{A}$}, denoted by $\alpha $, is a collection $(\mcl{D}_t)_{t \in G}$ of closed two-sided ideals of $\mcl{A}$ and a collection $(\alpha_t)_{t \in G}$ of $^*$-isomorphisms $\newfunk{\alpha_t}{\mcl{D}_{t^{-1}}}{\mcl{D}_t}$ such that
\begin{enumerate}[(i)]
\item $\mcl{D}_e=\mcl{A}$, where $e$ represents the identity element of $G$;
\item $\alpha_s^{-1}(\mcl{D}_s\cap \mcl{D}_{t^{-1}})\subseteq\mcl{D}_{(ts)^{-1}}$;
\item $\alpha_t\circ \alpha_s(x)=\alpha_{ts}(x), \ \ \forall \: x\in\alpha_s^{-1}(\mcl{D}_s\cap \mcl{D}_{t^{-1}})$.
\end{enumerate}
The triple $(\mcl{A},G,\alpha)$ is called a {\em partial dynamical system}. Fix a  partial dynamical system $(\mcl{A},G,\alpha)$. Recall the corresponding Fell bundle $\BB$ as defined in Definition \ref{delpartialsystem}: We let $\mcl{L}$ be the normed $^*$-algebra of the finite formal sums $\sum_{t \in G} a_t\delta_t$, where $a_t\in\mcl{D}_t$. We let $B_t$ denote the vector subspace $\mcl{D}_t\delta_t$ of $\mcl{L}$. The family $(B_t)_{t \in G}$ generates the Fell bundle $\BB$. It follows that $\ccomp(\BB)=\mcl{L}$. Moreover, for $x=\sum_{t\in G}a_t\delta_t\in \ccomp(\BB)$, we have that $E(\Lambda_\BB(x))={a_e}$. 
\begin{lemma}[McClanahan]
\label{lemmaA1}
Let $(\mcl{A},G,\alpha)$ be a partial dynamical system. For any $G$-invariant ideal $\mcl{I}$ of $\mcl{A}$ we have a canonical embedding of $\mcl{I}\rtimes_{\alpha,r}G$ in $\mcl{A}\rtimes_{\alpha,r}G$.
\end{lemma}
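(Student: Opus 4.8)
The plan is to realise the restriction of $\alpha$ to $\mcl{I}$ as a Fell sub-bundle of the bundle $\BB$ of $(\mcl{A},G,\alpha)$ and to verify that the inclusion of the dense $^*$-subalgebras of compactly supported sections is isometric for the two reduced norms; an isometric $^*$-homomorphism between dense $^*$-subalgebras extends to an injective $^*$-homomorphism of the completions, which is the desired embedding. First I would observe that $G$-invariance of $\mcl{I}$ makes $(\mcl{D}_t\cap\mcl{I})_{t\in G}$, together with the restrictions $\alpha_t|\colon\mcl{D}_{t^{-1}}\cap\mcl{I}\to\mcl{D}_t\cap\mcl{I}$, a partial action of $G$ on $\mcl{I}$: indeed $\alpha_t(\mcl{D}_{t^{-1}}\cap\mcl{I})\subseteq\alpha_t(\mcl{D}_{t^{-1}})\cap\mcl{I}=\mcl{D}_t\cap\mcl{I}$ by invariance, and the three axioms are inherited. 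Writing $B_t=\mcl{D}_t\delta_t$ and $B_t^{\mcl{I}}=(\mcl{D}_t\cap\mcl{I})\delta_t$, the family $\BB_{\mcl{I}}=(B_t^{\mcl{I}})_{t\in G}$ is a Fell sub-bundle of $\BB$, and the formulas for the product and involution in $\mcl{L}$ show that $\mcl{L}^{\mcl{I}}:=\ccomp(\BB_{\mcl{I}})$ is a $^*$-subalgebra of $\mcl{L}=\ccomp(\BB)$: since $a_t\in\mcl{I}$ forces $\alpha_{t^{-1}}(a_t)\in\mcl{I}$, all products and adjoints remain in the ideal fibres.

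Next I would set up the Hilbert-module comparison. Let $M\subseteq l_2(\BB)$ be the set of sections $\eta$ with $\eta(t)\in B_t^{\mcl{I}}$ for all $t$. Because $\mcl{I}$ is an ideal, $\langle\eta,\eta\rangle=\sum_t\eta(t)^*\eta(t)\in\mcl{I}$ for $\eta\in M$, the module norm is unchanged, and $M$ is canonically the Hilbert $\mcl{I}$-module $l_2(\BB_{\mcl{I}})$. Inspecting the formula $(\Lambda_\BB(\xi)\eta)(t)=\sum_s\xi(s)\eta(s^{-1}t)$ for $\xi\in\mcl{L}^{\mcl{I}}$ yields two facts: each product $\xi(s)\eta(s^{-1}t)$ lies in the ideal fibre $B_t^{\mcl{I}}$, so $\Lambda_\BB(\xi)$ maps all of $l_2(\BB)$ into $M$; and the restriction of $\Lambda_\BB(\xi)$ to $M$ coincides with $\Lambda_{\BB_{\mcl{I}}}(\xi)$ under the above identification. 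The second fact already gives the easy inequality $\|\Lambda_{\BB_{\mcl{I}}}(\xi)\|\le\|\Lambda_\BB(\xi)\|$, since restricting an adjointable operator to a submodule does not increase its norm.

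The main obstacle is the reverse inequality $\|\Lambda_\BB(\xi)\|\le\|\Lambda_{\BB_{\mcl{I}}}(\xi)\|$, and here I would use an approximate unit $(e_\lambda)$ of $\mcl{I}$ together with the fact that $\Lambda_\BB(\xi)$ lands in $M$. Put $u_\lambda=e_\lambda\delta_e\in B_e^{\mcl{I}}$. For finitely supported $\xi\in\mcl{L}^{\mcl{I}}$ one has $\xi u_\lambda\to\xi$ coefficientwise, hence in $l_1(\BB)$ and a fortiori in reduced norm, so $\Lambda_\BB(\xi u_\lambda)=\Lambda_\BB(\xi)\Lambda_\BB(u_\lambda)\to\Lambda_\BB(\xi)$ in operator norm. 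Fix $\eta\in l_2(\BB)$ with $\|\eta\|\le1$ and set $\eta_\lambda:=\Lambda_\BB(u_\lambda)\eta$; since $u_\lambda\in B_e^{\mcl{I}}$ we have $\eta_\lambda\in M$ with $\|\eta_\lambda\|\le\|\Lambda_\BB(u_\lambda)\|\le1$, whence $\|\Lambda_\BB(\xi)\eta_\lambda\|=\|\Lambda_{\BB_{\mcl{I}}}(\xi)\eta_\lambda\|\le\|\Lambda_{\BB_{\mcl{I}}}(\xi)\|$. As $\Lambda_\BB(\xi)\eta_\lambda=\Lambda_\BB(\xi u_\lambda)\eta\to\Lambda_\BB(\xi)\eta$, passing to the limit gives $\|\Lambda_\BB(\xi)\eta\|\le\|\Lambda_{\BB_{\mcl{I}}}(\xi)\|$, and the supremum over $\eta$ yields the claim. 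Combining the two inequalities shows that the inclusion $\mcl{L}^{\mcl{I}}\hookrightarrow\mcl{L}$ is isometric for the reduced norms and therefore extends to the desired injective $^*$-homomorphism $\mcl{I}\rtimes_{\alpha,r}G\hookrightarrow\mcl{A}\rtimes_{\alpha,r}G$. I expect this last norm estimate to be the only genuinely delicate point; everything else is bookkeeping with the ideal fibres, and the faithfulness of the conditional expectation $E$ recorded in the appendix could alternatively be invoked to package the injectivity once boundedness is in hand.
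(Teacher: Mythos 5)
Your proof is correct and takes essentially the same route as the paper's: both realise $\mcl{I}\rtimes_{\alpha,r}G$ as the reduced cross-sectional algebra of the ideal Fell bundle $\bigl((\mcl{D}_t\cap\mcl{I})\delta_t\bigr)_{t\in G}$ and show the canonical inclusion of finitely supported sections is isometric for the two reduced norms by using an approximate unit of $\mcl{I}$ to compress $l_2(\BB)$-vectors into ideal-valued sections. The only difference is packaging: the paper compresses sections on the right ($\xi\mapsto\xi e_n$) inside a single sup/lim computation defining the map $\varphi$, whereas you split the isometry into two inequalities (restriction to the submodule $M\cong l_2(\BB_{\mcl{I}})$ for one direction, and the left compression $\eta\mapsto\Lambda_\BB(u_\lambda)\eta$ combined with $\Lambda_\BB(\xi u_\lambda)\to\Lambda_\BB(\xi)$ for the other).
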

\begin{remark}
To best of {our} knowledge our proof of Lemma \ref{lemmaA1} is new. For a different proof using covariant representation we refer to the work in \cite{McClan}. Since our proof applies to general Fell bundles (and not only the one defining crossed products) we have included it for completeness.
\end{remark}	
\begin{proof}
Fix an approximate unit $(e_n)$ for $\mcl{I}$, and the Fell bundle 
$$\EE=(E_t)_{t\in G}, \ \ \ E_t=(\mcl{D}_t\cap\mcl{I})\delta_t,$$ 
equipped with the operations and norm coming from $\BB$. For each section $\xi\in l_2(\BB)$ let $\newfunk{\xi_n}{G}{\bigcup_{t\in G}E_t}$ denote the map $t\mapsto \xi(t){e_n}$ contained in $l_2(\EE)$. Let $\newfunk{\varphi}{\Ll(l_2(\EE))}{\Ll(l_2(\BB))}$ denote the map ${\varphi(T)\xi=\lim_n T(\xi_n)}$. For $x\in \ccomp(\EE)$, $\varphi(\Lambda_\EE(x))=\Lambda_\BB(x)$.
We have that $\varphi(\Lambda_\EE(x))=\Lambda_\BB(x)$ and $\|\varphi(\Lambda_\EE(x))\|=\|\Lambda_\EE(x)\|$, for any element $x=\sum_{t\in G}a_t\delta_t$ in $\ccomp(\EE)$, by direct computation
\begin{align*}
	\|\varphi(\Lambda_\EE(x))\|^2&=\sup_{\xi\in \ccomp(\BB), \|\xi\|\leq 1}\|\varphi(\Lambda_\EE(x))\xi\|^2\\
	&=\sup_{\xi\in \ccomp(\BB), \|\xi\|=1}\lim_n\|\sum_{t,s,r\in G}\big((a_t\delta_t)\xi_n(t^{-1}r)\big)^*((a_s\delta_s)\xi_n(s^{-1}r))\|\\
	&=\sup_{\eta\in \ccomp(\EE), \|\eta\|\leq 1}\|\sum_{t,s,r\in G}\big((a_t\delta_t)\eta(t^{-1}r)\big)^*((a_s\delta_s)\eta(s^{-1}r))\|\\
	&=\sup_{\eta\in \ccomp(\EE), \|\eta\|\leq 1}\|\Lambda_\EE(x)\eta\|^2\\
	&=\|\Lambda_\EE(x)\|^2
\end{align*}
The first and last equality above use the fact that the finitely supported sections are dense in $l^2$. The second and fourth equality follows from the definition of the maps $\varphi$ and $\Lambda_\EE$. For the third equality we obviously have $\geq$ after taking the limit. To get the $\leq$ notice that if we remove ``$\sup_{\xi\in \ccomp(\BB), \|\xi\|\leq1}\lim_n$'' from the left hand side we have $\leq$ since $\xi_n\in \{\eta\in \ccomp(\EE), \|\eta\|\leq 1\}$ for any $n$ and any $\xi\in \ccomp(\BB)$ with $\|\xi\|\leq 1$. The inequality $\leq$ remains valid when we take the limit and the supremum. I particular we obtain that the canonical embedding of $\ccomp(\EE)$ into $\ccomp(\BB)$, given by $a_t\delta_t\mapsto a_t\delta_t$, extends to an embedding $\newfunk{\iota}{\mcl{I}\rtimes_{\alpha,r}G}{\mcl{A}\rtimes_{\alpha,r}G}$. 
\end{proof}

Let $\mcl{I}$ be a $G$-invariant ideal of $\mcl{A}$. In order to introduce the map $\newfunk{\rho}{\mcl{A}\rtimes_{\alpha,r}G}{\mcl{A/I}\rtimes_{\alpha,r}G}$ we can again turn to \cite{Exe}. Let $\FF$, denote the Fell bundle 
$$(F_t)_{t\in G}, \ \ \ F_t=(\mcl{D}_t/\mcl{(I\cap D}_t))\delta_t,$$ 
equipped with the operations and norm coming from $\BB$. The canonical surjection of $\ccomp(\BB)$ onto $\ccomp(\FF)$, given by $a_t\delta_t\mapsto (a_t+ \mcl{D}_t\cap \mcl{I})\delta_t$, extends to a surjective $^*$-homomorphism $\newfunk{\rho}{\mcl{A}\rtimes_{\alpha,r}G}{\mcl{A/I}\rtimes_{\alpha,r}G}$ (\cf\cite[Proposition 3.11]{Exe}). In particular, by continuity of $\iota$, $\rho$, and $E$, we have the following result:
\begin{proposition}[Exel, McClanahan]
\label{appendix1}
Let $(\mcl{A},G,\alpha)$ be a partial dynamical system. For any $G$-invariant ideal $\mcl{I}$ of $\mcl{A}$ we have the commuting diagram
$$
\label{diagram}
\xymatrix{0 \ar[r] & \mcl{I}\rtimes_{\alpha,r}G \ar[r]^-{\iota} \ar[d]^-{E_\mcl{I}} & \mcl{A}\rtimes_{\alpha,r}G  \ar[r]^-{\rho} \ar[d]^-{E_\mcl{A}}& \mcl{A/I}\rtimes_{\alpha,r}G \ar[r] \ar[d]^-{E_{\mcl{A/I}}}& 0\\
0 \ar[r] & \mcl{I} \ar[r] & \mcl{A} \ar[r] & \mcl{A/I} \ar[r] & 0}
$$
\end{proposition}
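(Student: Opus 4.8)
The plan is to verify that all five maps in the diagram are well-defined and continuous, and then to check commutativity of the two squares on the dense $^*$-subalgebra $\ccomp(\BB)=\mcl{L}$ (respectively $\ccomp(\EE)$), extending to the completions by continuity. The horizontal maps are already in hand: the embedding $\iota$ is furnished by Lemma \ref{lemmaA1}, the surjection $\rho$ by the construction preceding the statement (via \cite[Proposition 3.11]{Exe}), and the three maps $E_\mcl{I}$, $E_\mcl{A}$, $E_{\mcl{A/I}}$ are the canonical conditional expectations onto the fibres over $e$, each given by extraction of the $e$-th Fourier coefficient and each contractive, hence continuous, by the appendix.

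First I would record the end conditions of the top row. Injectivity of $\iota$ is precisely Lemma \ref{lemmaA1}, and surjectivity of $\rho$ is built into its definition. For the relation $\rho\circ\iota=0$ I would evaluate on a generator $a_t\delta_t$ with $a_t\in\mcl{D}_t\cap\mcl{I}$: one has $\iota(a_t\delta_t)=a_t\delta_t$, which $\rho$ sends to $(a_t+\mcl{D}_t\cap\mcl{I})\delta_t=0$; since $\rho\circ\iota$ vanishes on the dense subalgebra it vanishes identically. I would note here that full exactness of the top row in the middle is the separate \emph{exactness} hypothesis and is not being asserted by this proposition.

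Next I would treat the left square, namely $E_\mcl{A}\circ\iota=\iota_0\circ E_\mcl{I}$, where $\iota_0\colon\mcl{I}\hookrightarrow\mcl{A}$ is the inclusion. On $x=\sum_t a_t\delta_t\in\ccomp(\EE)$ (so $a_t\in\mcl{D}_t\cap\mcl{I}$) both routes return the coefficient $a_e$: indeed $E_\mcl{I}(x)=a_e$ and $E_\mcl{A}(\iota(x))=a_e$, the only difference being the ambient algebra, which is reconciled by $\iota_0$. For the right square $E_{\mcl{A/I}}\circ\rho=\pi_0\circ E_\mcl{A}$, with $\pi_0\colon\mcl{A}\to\mcl{A/I}$ the quotient map, I would evaluate on $x=\sum_t a_t\delta_t\in\ccomp(\BB)$: here $\pi_0(E_\mcl{A}(x))=a_e+\mcl{I}$, while $E_{\mcl{A/I}}(\rho(x))=a_e+\mcl{D}_e\cap\mcl{I}=a_e+\mcl{I}$ because $\mcl{D}_e=\mcl{A}$; the two agree.

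Having verified commutativity on the dense subalgebras $\ccomp(\BB)$ and $\ccomp(\EE)$, I would invoke the continuity of the five maps to conclude that both squares commute on all of the reduced crossed products. The argument is essentially bookkeeping with Fourier coefficients, and I expect no serious obstacle; the one point deserving care is confirming that $\iota$ and $\rho$ act coefficient-wise on the dense subalgebras, so that they interact transparently with the $e$-th-coefficient maps $E$. This is immediate from their defining formulas $a_t\delta_t\mapsto a_t\delta_t$ and $a_t\delta_t\mapsto(a_t+\mcl{D}_t\cap\mcl{I})\delta_t$. The substance of the result therefore lives in Lemma \ref{lemmaA1} and in Exel's construction of $\rho$, both already available.
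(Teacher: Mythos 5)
Your proposal is correct and takes essentially the same route as the paper: the paper obtains $\iota$ from Lemma \ref{lemmaA1}, obtains $\rho$ from the quotient Fell bundle construction via \cite[Proposition 3.11]{Exe}, and then concludes the proposition ``by continuity of $\iota$, $\rho$, and $E$,'' exactly the density-plus-continuity argument you spell out. Your explicit verification of the two squares via $e$-th Fourier coefficients on $\ccomp(\BB)$ and $\ccomp(\EE)$, together with the (correct) remark that exactness of the top row at the middle term is \emph{not} part of the claim but a separate hypothesis on the action, simply fills in details the paper leaves implicit.
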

Using that $E_{\mcl{A}}$ is idempotent and identifying $\mcl{I}\rtimes_{\alpha,r}G$ with a subset of $\mcl{A}\rtimes_{\alpha,r}G$ we have the additional properties:
\begin{proposition}[Exel, McClanahan]
\label{appendix3}
Let $(\mcl{A},G,\alpha)$ be a partial dynamical system. For any $G$-invariant ideal $\mcl{I}$ of $\mcl{A}$ and any ideal $\mcl{J}$ of in $\mcl{A}\rtimes_{\alpha,r}G$ we have the following identities:
$$(\mcl{I}\rtimes_{\alpha,r}G)\cap \mcl{A} = \mcl{\mcl{I}}, \ \ \ \mcl{J\cap A}\subseteq E_{\mcl{A}}(\mcl{J}), \ \ \ {\rm Ideal}[\mcl{I}]=\mcl{I}\rtimes_r G,$$
where ${\rm Ideal}[S]$ denotes the smallest ideal in $\mcl{A}\rtimes_{\alpha,r}G$ generated by $S\subseteq \mcl{A}\rtimes_{\alpha,r}G$.
\end{proposition}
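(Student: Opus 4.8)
The three displayed identities sit at rather different depths, and my plan is to treat them in increasing order of difficulty, all of them leaning on the conditional expectation $E_{\mcl{A}}$ and the commuting diagram of Proposition~\ref{appendix1}. The guiding observation is that $E_{\mcl{A}}$ restricts to the identity on the copy of $\mcl{A}$ inside $\mcl{A}\rtimes_{\alpha,r}G$, since $E_{\mcl{A}}(a\delta_e)=a$. This alone settles the middle identity: if $x\in\mcl{J}\cap\mcl{A}$ then $x=E_{\mcl{A}}(x)\in E_{\mcl{A}}(\mcl{J})$, so $\mcl{J}\cap\mcl{A}\subseteq E_{\mcl{A}}(\mcl{J})$ with nothing further required.

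For the first identity $(\mcl{I}\rtimes_{\alpha,r}G)\cap\mcl{A}=\mcl{I}$, the inclusion $\supseteq$ is immediate from $\mcl{I}=\mcl{I}\delta_e\subseteq\mcl{I}\rtimes_{\alpha,r}G$ together with $\mcl{I}\subseteq\mcl{A}$. For $\subseteq$ I would invoke the left-hand square of the diagram in Proposition~\ref{appendix1}, which says that on $\iota(\mcl{I}\rtimes_{\alpha,r}G)$ the expectation $E_{\mcl{A}}$ agrees with $E_{\mcl{I}}$, a map taking values in $\mcl{I}$. Hence for any $x\in(\mcl{I}\rtimes_{\alpha,r}G)\cap\mcl{A}$ we obtain $x=E_{\mcl{A}}(x)=E_{\mcl{I}}(x)\in\mcl{I}$, using the identity-on-$\mcl{A}$ property for the first equality.

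The third identity ${\rm Ideal}[\mcl{I}]=\mcl{I}\rtimes_{\alpha,r}G$ carries the real content, and I would split it into two inclusions. First, to see that $\mcl{I}\rtimes_{\alpha,r}G$ is itself a closed two-sided ideal containing $\mcl{I}\delta_e$, it suffices to verify that $\ccomp(\EE)$ is a two-sided ideal of $\mcl{L}=\ccomp(\BB)$ and then pass to closures by continuity of the multiplication. Concretely, for $a_t\in\mcl{D}_t\cap\mcl{I}$ and $b_s\in\mcl{D}_s$ the product formula gives $(a_t\delta_t)(b_s\delta_s)=\alpha_t(\alpha_{t^{-1}}(a_t)b_s)\delta_{ts}$; here $G$-invariance of $\mcl{I}$ yields $\alpha_{t^{-1}}(a_t)\in\mcl{D}_{t^{-1}}\cap\mcl{I}$, the ideal property of $\mcl{I}$ and of $\mcl{D}_{t^{-1}}$ gives $\alpha_{t^{-1}}(a_t)b_s\in\mcl{D}_{t^{-1}}\cap\mcl{I}$, and applying $\alpha_t$ (again $G$-invariant) together with the Fell-bundle relation $B_tB_s\subseteq B_{ts}$ places the coefficient in $\mcl{D}_{ts}\cap\mcl{I}$; the symmetric computation handles left multiplication $(b_s\delta_s)(a_t\delta_t)$. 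This gives ${\rm Ideal}[\mcl{I}]\subseteq\mcl{I}\rtimes_{\alpha,r}G$. For the reverse inclusion, since ${\rm Ideal}[\mcl{I}]$ is closed and $\ccomp(\EE)$ is dense in $\mcl{I}\rtimes_{\alpha,r}G$, it is enough to show each generator $a_t\delta_t$ with $a_t\in\mcl{D}_t\cap\mcl{I}$ lies in ${\rm Ideal}[\mcl{I}]$. Taking an approximate unit $(u_\lambda)$ for $\mcl{D}_t\cap\mcl{I}$ and noting $u_\lambda\delta_e\in\mcl{I}\delta_e\subseteq{\rm Ideal}[\mcl{I}]$, the products $(u_\lambda\delta_e)(a_t\delta_t)=(u_\lambda a_t)\delta_t$ lie in the ideal and converge to $a_t\delta_t$ (the inclusion $B_t\hookrightarrow\mcl{A}\rtimes_{\alpha,r}G$ being isometric), so closedness yields $a_t\delta_t\in{\rm Ideal}[\mcl{I}]$.

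I expect the third identity to be the only genuine obstacle. The delicate points are twofold: verifying that $\mcl{I}\rtimes_{\alpha,r}G$ is truly a two-sided ideal requires pushing $G$-invariance through the twisted multiplication on both sides, and recovering the ``shifted'' generators $a_t\delta_t$ from $\mcl{I}\delta_e$ is precisely where the approximate-unit trick is essential, since partial crossed products carry no global unitaries $\delta_t$ to manufacture these elements directly. By contrast, the first two identities are formal consequences of the behaviour of $E_{\mcl{A}}$ on $\mcl{A}$ and on $\mcl{I}\rtimes_{\alpha,r}G$, and should be dispatched in a line each.
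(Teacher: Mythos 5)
Your proof is correct and uses exactly the ingredients the paper points to: the paper itself gives no proof of Proposition \ref{appendix3} (it is quoted from Exel and McClanahan, with only the preceding one-line hint that it follows from idempotence of $E_{\mcl{A}}$ and the identification of $\mcl{I}\rtimes_{\alpha,r}G$ with a subset of $\mcl{A}\rtimes_{\alpha,r}G$ supplied by Lemma \ref{lemmaA1}). Your three arguments --- $E_{\mcl{A}}$ restricting to the identity on $\mcl{A}$ for the middle identity, the commuting square of Proposition \ref{appendix1} for $(\mcl{I}\rtimes_{\alpha,r}G)\cap\mcl{A}\subseteq\mcl{I}$, and the twisted-multiplication plus approximate-unit computation for ${\rm Ideal}[\mcl{I}]=\mcl{I}\rtimes_{\alpha,r}G$ --- correctly and completely fill in that outline.
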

\subsection{Topological freeness}
\label{appendixA2}
Let $(\mcl{A},G,\alpha)$ be a partial dynamical system with ideals $(\mcl{D}_t)_{t \in G}$ and $^*$-isomorphisms $(\alpha_t)_{t \in G}$. If we replace the \cst-algebra $\mcl{A}$ by a locally compact Hausdorff space $X$, the ideals $\mcl{D}_t$ by open sets $X_t$ and the $^*$-isomorphisms $\alpha_t$ by homeomorphisms  $\newfunk{\theta_t}{X_{t^{-1}}}{X_t}$, we obtain a {\em partial action $\theta$} of $G$ on the space $X$. A partial action of a group $G$ on a space $X$ induces naturally a partial action $\alpha$ of $G$ on $\czero(X)$. The ideals are $\czero(X_t)$ and $\alpha_t(f)=f\circ \theta_{t^{-1}}$. The converse (still in the abelian case) is also true (\cf\cite{Aba, Gon}).

Let ${\rm Prim}\,\mcl{A}$ denote the primitive ideal space of $\mcl{A}$  with respect to the Jacobson topology. Let $\hat{\mathcal{A}}$ denote the spectrum of $\mcl{A}$  with respect to the topology induced by the surjection $\newfunk{\kappa}{\hat{\mathcal{A}}}{{\rm Prim}\,\mcl{A}}$, $\kappa([\pi])={\rm ker }\,\pi$. Following \cite{Leb} recall how $\alpha$ defines a partial action of $G$ on ${\rm Prim}\,\mcl{A}$ and on $\hat{\mathcal{A}}$:

For any ideal $\mcl{J}$ of $\mcl{A}$ we let ${\rm supp}\, \mcl{J}$ denote the subset $\{x\in {\rm Prim}\,\mcl{A}\colon \mcl{J}\nsubseteq x \}$. It is known (see \cite[Section 3.2.1]{Dix} or \cite[Section 1.4]{Sie2}) that the  mapping $x \mapsto x\cap J$ establishes a homeomorphism ${\rm supp}\, \mcl{J} \leftrightarrow {\rm Prim}\,\mcl{J}$ and ${\rm supp}\, \mcl{J}$ is an open set in ${\rm Prim}\,\mcl{A}$. Set also $\hat{\mathcal{A}}^\mcl{J} = \{ [\pi] \in \hat{\mathcal{A}} : \pi (\mcl{J}) \neq 0 \}$. Then the mapping $[\pi] \mapsto [\pi|_\mcl{J}]$ establishes a homeomorphism $\hat{\mathcal{A}}^\mcl{J}\leftrightarrow \hat{\mathcal{J}}$ and $\hat{\mathcal{A}}^\mcl{J}$ is an open set in $\hat{\mathcal{A}}$  (see \cite[Section 3.2.1]{Dix} or \cite[Section 1.4]{Sie2}). Let us define the mapping $\newfunk{\tau_t}{\hat{\mathcal{A}}^{\mcl{D}_{t^{-1}}}}{\hat{\mathcal{A}}^{\mcl{D}_{t}}}$ in the following way: For any $[\pi] \in \hat{\mathcal{A}}^{\mcl{D}_{t^{-1}}}$ we set 
$$\tau_t ([\pi]) = [\pi\circ\alpha_{t^{-1}}], \ \ \ t\in G.$$ 
The foregoing observations tell us that  $\tau_t$ is a homeomorphism. Let us also define the mapping $\newfunk{\theta_t}{{\rm supp}\, \mcl{D}_{t^{-1}}}{{\rm supp}\, \mcl{D}_t}$ in the following way: For any point $x \in {\rm supp}\, \mcl{D}_{t^{-1}}$ such that $x = {\rm ker}\, \pi$ where $[\pi] \in \hat{\mathcal{A}}^{\mcl{D}_{t^{-1}}}$ we set 
$$
\theta_t(x) = \ker\, (\pi\circ\alpha_{t^{-1}}), \ \ \ t\in G.$$
Clearly $\theta_t$ is a homeomorphism. For $\tau_t$ and $\theta_t$ defined in the above described way we have that $(\tau_t)_{t\in G}$ defines a partial action of $G$ on $\hat{\mathcal{A}}$ and $(\theta_t)_{t\in G}$  defines a partial action of $G$ of ${\rm Prim}\,\mcl{A}$. Recall that the action $\alpha$ is called \emph{topologically free} if for any finite set $\{t_1,\dots, t_n\}$, $t_i \neq e$ the set 
$$\bigcup_{i=1}^n \big\{x \in {\rm supp}\, \mcl{D}_{t_i^{-1}}: \theta_{t_i} (x)=x   \big\}$$ 
has empty interior. This condition can be also formulated in the following way: For any finite set $\{ t_1 , ... t_k \} \subseteq G$ and any nonempty open set $U$ there exists a point $x \in U $ such that all the points $\theta_{t_i}(x)$, $i=1,\dots,k$  that are defined  ($ \Leftrightarrow x \in {\rm supp}\, \mcl{D}_{t_i^{-1}}$) are distinct. Recall that the action $\alpha$ has the \emph{intersection property} if every nonzero ideal in $\mcl{A}\rtimes_{\alpha,r} G$ has a nonzero intersection with $\mcl{A}$.
\begin{theorem}[Lebedev]
\label{appendix2}
Let $(\mcl{A},G,\alpha)$ be a partial dynamical system. Suppose that the action is topologically free. Then $\alpha$ has the intersection property.
\end{theorem}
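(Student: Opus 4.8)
The plan is to show that any nonzero ideal $\mcl{J}$ of $\mcl{A}\rtimes_{\alpha,r}G$ contains a nonzero element of $\mcl{A}$. Since the conditional expectation $E_{\mcl{A}}$ is faithful on positive elements, any $0\neq b\in\mcl{J}_+$ has $E_{\mcl{A}}(b)\neq 0$, and after rescaling I may assume $\|E_{\mcl{A}}(b)\|=1$. Approximating $b$ in norm by a finitely supported $c=a_e+\sum_{t\in F}a_t\delta_t\in\mcl{L}$, with $F\subseteq G\setminus\{e\}$ finite and $a_e=E_{\mcl{A}}(c)\in\mcl{A}_+$ of norm close to $1$, I reduce the whole statement to producing a positive contraction $h\in\mcl{A}$ with $\|ha_eh\|>\|a_e\|-\varepsilon$ and $\|h(a_t\delta_t)h\|<\varepsilon$ for every $t\in F$ (this is the same kind of conclusion as in Lemma \ref{exellemma}, and the bookkeeping of its parts (i)--(iii) applies verbatim). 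Granting such an $h$, the element $hbh$ lies in $\mcl{J}$ and, by the choice of $h$ together with $\|b-c\|<\varepsilon$, satisfies $\|hbh-ha_eh\|<(|F|+1)\varepsilon$ with $ha_eh\in\mcl{A}_+$ of norm close to $1$. A standard functional-calculus estimate (\cf{}\cite[Proposition 2.2]{Ror}) then yields $0\neq(ha_eh-\gamma)_+\precsim hbh$ for a suitable small $\gamma>0$; since $\precsim$ means $r_n^*(hbh)r_n\to(ha_eh-\gamma)_+$ with each $r_n^*(hbh)r_n\in\mcl{J}$, the limit is a nonzero element of $\mcl{J}\cap\mcl{A}$, as required.

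Everything thus rests on constructing $h$, and this is where topological freeness enters through the partial action $(\theta_t)$ on ${\rm Prim}\,\mcl{A}$. The set $O:=\{x\in{\rm Prim}\,\mcl{A}:\|\pi_x(a_e)\|>1-\varepsilon\}$, where $\pi_x$ denotes an irreducible representation with kernel $x$, is open and nonempty, whereas by hypothesis $\bigcup_{t\in F}\{x\in{\rm supp}\,\mcl{D}_{t^{-1}}:\theta_t(x)=x\}$ has empty interior. I may therefore pick $x_0\in O$ with $\theta_t(x_0)\neq x_0$ for every $t\in F$ for which $\theta_t(x_0)$ is defined. The aim is to choose $h$ positive of norm one, acting essentially as the identity where $a_e$ is large so that $\|ha_eh\|$ stays near $\|a_e\|$, but with its ``support'' in ${\rm Prim}\,\mcl{A}$ squeezed into so small a neighbourhood of $x_0$ that it is disjoint from the region near each $\theta_t(x_0)$ carrying $\alpha_t(h)$. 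When $\mcl{A}=\czero(X)$ is abelian this is immediate: choose an open $U\ni x_0$ with $U\cap\theta_t(U)=\emptyset$ for all $t\in F$ (possible since the finitely many $\theta_t(x_0)$ all differ from $x_0$) and let $h$ be a bump function with $h(x_0)=1$ supported in $U$; then $\alpha_t(h)$ is supported in $\theta_t(U)$, so $h(a_t\delta_t)h=\big(ha_t\,\alpha_t(h)\big)\delta_t$ vanishes outright while $\|ha_eh\|\geq|a_e(x_0)|>1-\varepsilon$.

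The main obstacle is exactly this separation step in the non-commutative setting. For general $\mcl{A}$ the points of ${\rm Prim}\,\mcl{A}$ are only $T_0$-separated and $a_e$ need not be peaked at a single point, so one cannot cut with a single clopen set. The remedy is to run the argument at the level of pure states and irreducible representations: because $x_0\neq\theta_t(x_0)$, the representations $\pi_{x_0}$ and $\pi_{x_0}\circ\alpha_{t^{-1}}$ have distinct kernels, hence are inequivalent, and a Glimm/Kadison-transitivity type excision argument then produces $h\in\mcl{A}_+$ that nearly fixes a chosen unit vector for $\pi_{x_0}$ (keeping $\|ha_eh\|$ close to $\|a_e\|$) while driving $\|ha_t\,\alpha_t(h)\|$ below $\varepsilon$. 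Performing this orthogonalisation simultaneously for the finitely many $t\in F$, with uniform control, is the technical heart of the proof; once it is in place the reduction of the first paragraph closes the argument. This is the route of Lebedev in \cite{Leb}, extending Archbold--Spielberg to partial actions, and in the commutative case it collapses to the bump-function computation above.
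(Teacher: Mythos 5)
You should first note that the paper does not actually prove this statement: Theorem \ref{appendix2} is quoted verbatim from Lebedev \cite{Leb} and stated in the appendix without proof, so your attempt can only be judged on its own merits. Your first paragraph is sound and matches the machinery the paper uses elsewhere: reducing to the existence of a positive contraction $h$ with $\|ha_eh\|>\|a_e\|-\varepsilon$ and $\|h(a_t\delta_t)h\|<\varepsilon$ is exactly the content of Lemma \ref{exellemma}/Theorem \ref{appendix4}, and your Cuntz-comparison conclusion (via \cite[Proposition 2.2]{Ror} and closedness of $\mcl{J}$) is a correct way to finish, parallel to how the paper deduces Theorem \ref{mainthorem2}. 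The genuine gap is your third paragraph: for non-abelian $\mcl{A}$ the construction of $h$ --- which is the entire content of Lebedev's theorem --- is never carried out. You invoke a ``Glimm/Kadison-transitivity type excision argument'' with ``uniform control'' over the finitely many $t\in F$, acknowledge that this is ``the technical heart,'' and then cite \cite{Leb}, i.e.\ the very result being proved. This is not a proof but a deferral; nor can you close it by appealing to the paper's own machinery, since the paper's route to the intersection property in the non-abelian setting (Lemma \ref{exellemma} plus Theorem \ref{mainthorem2}) takes the condition $\inf\{\|x(a\delta_t)x\|:x\in\mcl{B}_+,\|x\|=1\}=0$ as a \emph{hypothesis}, and the bridge from topological freeness to that condition is only established when $\mcl{A}$ is abelian (Proposition \ref{abeliantopfree2}).

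There is also a smaller, fixable error in your abelian argument. You claim one can choose an open $U\ni x_0$ with $U\cap\theta_t(U)=\emptyset$ for \emph{all} $t\in F$, justified by the fact that the defined values $\theta_t(x_0)$ differ from $x_0$. But when $x_0\notin X_{t^{-1}}$, so that $\theta_t(x_0)$ is undefined, this disjointness can fail for every neighbourhood of $x_0$: take $X=[0,1]$, $X_{t^{-1}}=(0,1]$, $\theta_t(x)=x/2$, $x_0=0$ (a topologically free partial action); then every $U=[0,\eta)$ satisfies $\theta_t(U\cap X_{t^{-1}})=(0,\eta/2)\subseteq U$. The term $h(a_t\delta_t)h$ then does not vanish ``outright.'' The correct repair, which is what \cite{ExeLacQui} actually does, is to observe that its coefficient function $y\mapsto h(y)a_t(y)h(\theta_{t^{-1}}(y))$ is supported in $U\cap\theta_t(U\cap X_{t^{-1}})$, and a compactness/Hausdorff argument shows that if $x_0\notin X_{t^{-1}}$ then $\sup\{|a_t(y)|:y\in U\cap\theta_t(U\cap X_{t^{-1}})\}\to 0$ as $U$ shrinks to $x_0$ (otherwise the points $y$ and $\theta_{t^{-1}}(y)$ would both accumulate at $x_0$, forcing $x_0\in X_{t^{-1}}$ and $\theta_{t^{-1}}(x_0)=x_0$). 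So $\|h(a_t\delta_t)h\|<\varepsilon$ still holds, but by smallness of $a_t\in\czero(X_t)$ near the boundary, not by disjointness of supports.
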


\begin{theorem}[Exel, Laca, Quigg]
\label{appendix4}
Let $(\mcl{A},G,\alpha)$ be a partial dynamical system with $\mcl{A}$ abelian. Suppose that the action is topologically free. Then for every $b\in \mcl{A}\rtimes_{\alpha,r} G$ and every $\varepsilon>0$ there exist a positive contraction $x\in \mcl{A}$ satisfying 
$$\|xE_{\mcl{A}}(b)x-xbx\|< \varepsilon, \ \ \ \|xE_{\mcl{A}}(b)x\|> \|E_{\mcl{A}}(b)\|-\varepsilon.$$
\end{theorem}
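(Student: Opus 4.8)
The plan is to deduce this from the two results already established in the abelian setting: the equivalence in Proposition \ref{abeliantopfree} and the estimate in Lemma \ref{exellemma}. First I would invoke Proposition \ref{abeliantopfree}: since $\mcl{A}$ is abelian and $\alpha$ is topologically free, condition (i) holds, hence so does condition (iv), that is
$$\inf\{\|x(a\delta_t)x\| : x\in \mcl{B}_+,\ \|x\|=1\}=0$$
for every $t\neq e$, every $a\in \mcl{D}_t$, and every nonzero hereditary sub-\cst-algebra $\mcl{B}$ of $\mcl{A}$. This is precisely the hypothesis of Lemma \ref{exellemma}. Consequently, for every \emph{positive} $b\in \mcl{A}\rtimes_{\alpha,r}G$ and every $\varepsilon>0$, Lemma \ref{exellemma} produces a positive contraction $x\in \mcl{A}$ satisfying the two desired inequalities, so the theorem holds in that case with no further work.

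It remains to remove the positivity assumption on $b$, and this is the only point where a new argument is required. Given arbitrary $b$, set $b_0:=E_{\mcl{A}}(b)\in \czero(X)=\mcl{A}$, which need not be positive; we may assume $b_0\neq 0$, since otherwise any bump $x$ annihilating the finitely many off-diagonal coefficients of $b$ already gives $\|xbx\|<\varepsilon$ and $\|xE_{\mcl{A}}(b)x\|=0>-\varepsilon$. The key observation is that commutativity of $\mcl{A}$ yields, for every positive $x\in \mcl{A}$ and every $c\in \mcl{A}$, the identity $\|xcx\|=\|x|c|\,x\|$ (both sides equal $\sup_{y} x(y)^2|c(y)|$ as functions on $X$). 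Because of this, the localisation step in the proof of Lemma \ref{exellemma} may be run with $|b_0|$ in place of $b_0$: choosing the bump function $x_0:=f(|b_0|)$ by functional calculus and building the decreasing chain of nonzero hereditary subalgebras $\mcl{B}_1\supseteq \mcl{B}_2\supseteq \cdots\supseteq \mcl{B}_n$ exactly as there, one obtains a positive contraction $x$ concentrated near a peak of $|b_0|$ with $\|x b_0 x\|=\|x|b_0|x\|>\|b_0\|-\varepsilon$, while the inductive use of condition (iv) on each $\mcl{B}_i$ forces $\|x(b_t\delta_t)x\|$ to be small for every off-diagonal term. The two reduction steps of Lemma \ref{exellemma}---density of $\mcl{L}$ and rescaling by $\|E_{\mcl{A}}(b)\|$---use no positivity and transfer verbatim; at the end one invokes only $\|xE_{\mcl{A}}(b)x-xbx\|=\|x(b-b_0)x\|\le \sum_{t\in F}\|x(b_t\delta_t)x\|$.

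I expect the main obstacle to be exactly this passage from positive to general $b$, namely controlling $\|xE_{\mcl{A}}(b)x\|$ from below when $E_{\mcl{A}}(b)$ fails to be positive. It is resolved entirely by the commutativity identity above, which is the single additional place (beyond its role inside Proposition \ref{abeliantopfree}) where abelianness of $\mcl{A}$ is used. As an alternative to re-running Lemma \ref{exellemma}, one could reprove the estimate directly from topological freeness by the original separation argument of Exel, Laca and Quigg---selecting a point where $|b_0|$ nearly attains its norm, using topological freeness to find a nearby point whose finitely many defined translates $\theta_{t}(\cdot)$, $t\in F$, are pairwise distinct, and taking $x$ to be a small bump there that annihilates the off-diagonal coefficients---but reusing Lemma \ref{exellemma} keeps the proof short and self-contained within the paper.
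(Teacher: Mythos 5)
Your proposal is correct, and it takes a genuinely different route from the one the paper relies on: the paper in fact gives no proof of Theorem \ref{appendix4} at all, attributing it to Exel--Laca--Quigg \cite{ExeLacQui}, whose original argument is the direct separation argument you mention only as an alternative at the end (pick a point where $|E_{\mcl{A}}(b)|$ nearly attains its norm, use topological freeness to choose a nearby point whose finitely many defined translates are pairwise distinct, and take a bump function $x$ with support so small that $x b_t\alpha_t(x)=0$ \emph{exactly} for the finitely many $t\neq e$ in the support of $b\in\mcl{L}$). Your derivation instead bootstraps from the paper's own machinery: Proposition \ref{abeliantopfree} gives (i)$\Rightarrow$(iv), Lemma \ref{exellemma} then settles the case of positive $b$, and the passage to arbitrary $b$---the genuinely new content, since $E_{\mcl{A}}(b)$ need then be neither positive nor nonzero---is handled by the commutative identity $\|xcx\|=\|x\,|c|\,x\|$ for $x\in\mcl{A}_+$, $c\in\mcl{A}$, which lets you rerun the bump-function construction of Lemma \ref{exellemma} on $|b_0|$ while retaining the lower bound on $\|xb_0x\|$. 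This is non-circular (neither the proof of Proposition \ref{abeliantopfree2} nor that of Lemma \ref{exellemma} invokes Theorem \ref{appendix4}), and the chain-of-hereditary-subalgebras step is indeed untouched by the substitution $b_0\mapsto |b_0|$, since it involves only the off-diagonal coefficients. What your route buys: it stays entirely inside the paper and shows that the positivity restriction in Lemma \ref{exellemma} is immaterial when $\mcl{A}$ is abelian---worth recording, because the identity $\|xcx\|=\|x\,|c|\,x\|$ is precisely what fails in the non-abelian setting and explains why that lemma is stated for positive elements. What the Exel--Laca--Quigg route buys: exact rather than approximate annihilation of the cross terms and no case distinction on $E_{\mcl{A}}(b)$. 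Two cosmetic points only: the phrase ``finitely many off-diagonal coefficients of $b$'' makes sense only after the reduction to $b\in\mcl{L}$, so the $E_{\mcl{A}}(b)=0$ case should be dispatched after step (i) of the lemma, with the cross terms made small by the chain construction rather than literally annihilated; and, exactly as in the paper's own proof of Lemma \ref{exellemma}, the final estimate yields $\|xb_0x\|\geq(1-\varepsilon)\|b_0\|$, so the strict inequality requires the usual adjustment of $\varepsilon$.
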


\begin{proposition}
\label{abeliantopfree2}
Let $(\mcl{A},G,\alpha)$ be a partial dynamical system with $\mcl{A}$ abelian. Then the following properties are equivalent:
\begin{enumerate}[(i)]
\item $\alpha$ is topologically free,
\item[$(i')$] $F_t=\{x\in U_{t^{-1}}: \theta_t(x)= x\}$ has empty interior ($t\neq e$),
\item $\|\alpha_t|_{\mcl{K}}-id|_\mcl{K}\|=2$ for every $\alpha_t$-invariant ideal $\mcl{K}$ in $D_{t^{-1}}$ (and $t\neq e$),
\item[$(ii')$] $\|\alpha_t|_{\mcl{K}}-id|_\mcl{K}\|\neq 0$ for every $\alpha_t$-invariant ideal $\mcl{K}$ in $D_{t^{-1}}$ (and $t\neq e$),
\item $\inf\{\|x\alpha_{t}(x)\|: x\in \mcl{K}_+, \|x\|=1\}=0$ for every nonzero ideal $\mcl{K}$ in $\mcl{D}_{t^{-1}}$ (and $t\neq e$),
\item $\inf\{\|{x}( b_t\delta_t){x}\|: x\in \mcl{B}_+, \|x\|=1\}=0$ for every $b_t\in{\mcl{D}_t}$, and every nonzero hereditary \cst-algebra $\mcl{B}$ in $\mcl{A}$ (and $t\neq e$),
\item[$(iv')$] $\inf\{\|{x}( b_t\delta_t){x}\|: x\in \mcl{K}_+, \|x\|=1\}=0$ for every $b_t\in{\mcl{D}_t}$, and every nonzero ideal $\mcl{K}$ in $\mcl{A}$ ($t\neq e$).
\end{enumerate}
\end{proposition}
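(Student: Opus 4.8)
The plan is to pass to the commutative picture and reduce every clause to a statement about the fixed-point sets $F_t=\{x\in U_{t^{-1}}:\theta_t(x)=x\}$. Write $\mcl{A}=\czero(X)$, so that $\mcl{D}_t=\czero(U_t)$ and $\alpha_t(f)=f\circ\theta_{t^{-1}}$, where $(\newfunk{\theta_t}{U_{t^{-1}}}{U_t})_{t\in G}$ is the inducing partial action (Appendix \ref{appendixA2}). Since every hereditary \cst-subalgebra of a commutative \cst-algebra is an ideal of the form $\czero(W)$ with $W\subseteq X$ open, the families quantified over in (iv) and (iv$'$) coincide, so (iv)$\Leftrightarrow$(iv$'$) is immediate; likewise the ideals of $\mcl{A}$ (resp.\ of $\mcl{D}_{t^{-1}}$) are exactly the $\czero(W)$ with $W\subseteq X$ (resp.\ $W\subseteq U_{t^{-1}}$) open. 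For (i)$\Leftrightarrow$(i$'$) I would use that each $F_t$ is relatively closed in the open set $U_{t^{-1}}$: one direction is trivial, and for the other a short induction on $|F|$ shows a finite union has empty interior whenever each summand does. Indeed, if $W$ is open and contained in $\bigcup_{i=1}^n F_{t_i}$, then $W\cap(U_{t_n^{-1}}\setminus F_{t_n})$ is open and lies in $\bigcup_{i<n}F_{t_i}$, hence is empty by induction; so $W\cap U_{t_n^{-1}}$ is an open subset of $F_{t_n}$, so empty, and $W\subseteq\bigcup_{i<n}F_{t_i}$ finishes the induction. No Baire argument is needed here.

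The heart of the matter is the dichotomy governed by whether $F_t$ has empty interior. First I would record the pointwise formulas, for $x\in\czero(W)_+$ and $b_t\in\czero(U_t)$,
$$\|x\alpha_t(x)\|=\sup_y |x(y)|\,|x(\theta_{t^{-1}}(y))|,\qquad \|x(b_t\delta_t)x\|=\sup_y |x(y)|\,|b_t(y)|\,|x(\theta_{t^{-1}}(y))|,$$
the second obtained from $(a\delta_t)(c\delta_s)=\alpha_t(\alpha_{t^{-1}}(a)c)\delta_{ts}$ together with the isometry $\czero(U_t)\cong B_t$; and, for an $\alpha_t$-invariant $\mcl{K}=\czero(V)$ (so $\theta_t(V)=V\subseteq U_t\cap U_{t^{-1}}$), that $\|\alpha_t|_\mcl{K}-id|_\mcl{K}\|=\sup_{\|f\|\le1}\sup_{y\in V}|f(\theta_{t^{-1}}(y))-f(y)|\le 2$. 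For the direction $\neg$(i$'$)$\Rightarrow$negations, suppose some $F_{t_0}$ has nonempty interior $W_0$. Then $W_0\subseteq U_{t_0}\cap U_{t_0^{-1}}$ is open and fixed pointwise by $\theta_{t_0}$, so $\czero(W_0)$ is an $\alpha_{t_0}$-invariant ideal on which $\alpha_{t_0}=id$, giving $\|\alpha_{t_0}|_{\czero(W_0)}-id\|=0$ and defeating both (ii) and (ii$'$). Choosing $b_{t_0}$ equal to $1$ on a nonempty open $O_0$ with $\overline{O_0}$ compact in $W_0$ and setting $\mcl{K}=\czero(O_0)$, the formulas (with $\theta_{t_0^{-1}}$ the identity there) give $\|x\alpha_{t_0}(x)\|=\|x(b_{t_0}\delta_{t_0})x\|=\|x\|^2=1$ for every normalized $x\in\czero(O_0)_+$, so the relevant infima are $1$, defeating (iii) and (iv$'$). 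Hence each of (ii),(ii$'$),(iii),(iv$'$) forces (i$'$).

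For the converse direction, the common device is that on any nonempty open set where we must work, (i$'$) furnishes a non-fixed point, and Hausdorffness separates it from its image. For (ii), given a nonzero $\alpha_t$-invariant $\mcl{K}=\czero(V)$, pick $z_0\in V$ with $\theta_{t^{-1}}(z_0)\neq z_0$ and use complete regularity to build $f\in\czero(V)$, $\|f\|\le1$, with $f(z_0)=1$ and $f(\theta_{t^{-1}}(z_0))=-1$; then $|(\alpha_t f-f)(z_0)|=2$, so the norm equals $2$. For (iii) and (iv$'$), given a nonempty open $W$ (and, for (iv$'$), $b_t\in\czero(U_t)$), I first dispose of the case $W\cap U_t=\emptyset$, where the product vanishes identically; otherwise I select $z_0\in W\cap U_t$ with $\theta_{t^{-1}}(z_0)\neq z_0$, separate $z_0$ and $\theta_{t^{-1}}(z_0)$ by disjoint opens $A,B$, and put $O=A\cap W\cap U_t\cap\theta_t(B\cap U_{t^{-1}})$. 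Then $z_0\in O$, and every $y\in O$ has $\theta_{t^{-1}}(y)\in B$, hence outside $O\subseteq A$; so any normalized $x\in\czero(O)_+$ satisfies $x(\theta_{t^{-1}}(y))=0$ throughout, whence $\|x(b_t\delta_t)x\|=0$ and the infimum is $0$. (Condition (iii) is the same construction with $W=V\subseteq U_{t^{-1}}$ and $b_t$ absent.) Finally (ii)$\Rightarrow$(ii$'$) is trivial since $2\neq 0$. Together with the first paragraph this closes the circle of all seven conditions.

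The step I expect to require the most care is the construction of the separating set $O$ with $\theta_{t^{-1}}(O)\cap O=\emptyset$: one must keep track of the three overlapping domains $W$, $U_t$, and $U_{t^{-1}}$ so that $\theta_{t^{-1}}$ is genuinely defined on $O$ and lands inside $B$, and verify that $z_0$ really lies in the intersection defining $O$. The two pointwise norm formulas—especially $\|x(b_t\delta_t)x\|=\sup_y|x(y)|\,|b_t(y)|\,|x(\theta_{t^{-1}}(y))|$—are routine but must be derived honestly, since the support-disjointness conclusion rests entirely on them.
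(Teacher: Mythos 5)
Your proof is correct, but it is routed differently from the paper's. The paper proves a cycle rather than a hub: it cites \cite{ExeLacQui} for $(i)\Leftrightarrow(i')$, shows $(i')\Rightarrow(iii)$ by density of the non-fixed points (essentially your separating-set construction, phrased for $\theta_t$), then $(iii)\Rightarrow(ii')$, $(ii')\Rightarrow(ii)$ and $(ii')\Rightarrow(i')$, and finally attaches $(iv')$ to the cycle by a two-way reduction $(iii)\Leftrightarrow(iv')$ that is largely algebraic: for $x\in(\mcl{K}\cap\mcl{D}_{t^{-1}})_+$ one has $\|x(b_t\delta_t)x\|\le\|b_t\|\,\|x\alpha_t(x)\|$, combined with a case analysis on whether $\mcl{K}\cap\mcl{D}_{t^{-1}}$ (resp.\ $\mcl{D}_t\cap\mcl{K}$) is zero and a Urysohn function to pass back from $(iv')$ to $(iii)$. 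You instead make $(i')$ the hub: the forward implications $(i')\Rightarrow(ii),(iii),(iv')$ all come from one explicit construction of an open set $O$ with $\theta_{t^{-1}}(O)\cap O=\emptyset$ (plus a $\pm 1$-valued function for $(ii)$), and all reverse implications follow simultaneously from the single counterexample $\czero(W_0)$, with $W_0$ the interior of a fixed-point set and $b_{t_0}$ a Urysohn function equal to $1$ on a relatively compact open subset. You also prove the finite-union step in $(i)\Leftrightarrow(i')$ by a short induction instead of citing \cite{ExeLacQui}. What each buys: your route is more self-contained and uniform, and it treats $(iv')$ without chaining through $(iii)$; the paper's route reuses $(iii)$ as a pivot so that the topological work is done once, and its inequality reducing $(iv')$ to $(iii)$ isolates the role of the extra element $b_t$ in a way that does not depend on the geometry of $X$. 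One small point to make explicit in your write-up: in $(i')\Rightarrow(ii)$ you need $\theta_{t^{-1}}(z_0)\in V$ so that the value $f(\theta_{t^{-1}}(z_0))=-1$ can be realized by $f\in\czero(V)$; this is exactly where the invariance convention $\alpha_t(\mcl{K})=\mcl{K}$, i.e.\ $\theta_t(V)=V\subseteq U_t\cap U_{t^{-1}}$, is used, and the paper devotes a parenthetical argument to precisely this verification.
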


\begin{proof} 
The implications $(i)\Leftrightarrow (i')$ follows from \cite[Definition 2.1, Lemma 2.2]{ExeLacQui}, and the implications $(iv)\Leftrightarrow (iv')$ and $(ii)\Rightarrow (ii')$ are evident.
	
$(ii')\Rightarrow (ii)$: 
Find some function $f\in \mcl{K_+}=\czero(V)_+$ such that $\alpha_t(f)\neq f$. Pick any $x\in C$, where $C:=\{y\in V \colon |\alpha_t(f)(y)-f(y)|>0\}$.

Notice $x\neq \theta_{t^{-1}}(x)$ in $V$. \big(If $f(x)\neq 0$ then---for $h:=\alpha_{t^{-1}}(f)\in \mcl{K}$---we have that $h(\theta_{t^{-1}}(x))=f(x)\neq 0$, recalling that $\alpha_t(f)(x)=f(\theta_{t^{-1}}(x))$. Hence $\theta_{t^{-1}}(x)\in V$. If $f(x)=0$ then $\alpha_t(f)(x)=f(\theta_{t^{-1}}(x))\neq 0$. Hence $\theta_{t^{-1}}(x)\in V$. If $x=\theta_{t^{-1}}(x)$ then $0=|f(\theta_{t^{-1}}(x))-f(x)|=|\alpha_t(f)(x)-f(x)|>0$.\big) One can now easily find a function $h\in \mcl{K}$ such that $|h(\theta_{t^{-1}}(x))-h(x)|=2$. Hence $\|\alpha_t|_{\mcl{K}}-id|_\mcl{K}\|=2$.

$(i')\Rightarrow (iii)$:
Assume not $(iii)$. Find some $t\neq e$ and $\mcl{K}=\czero(V)$ in $\czero(U_{t^{-1}})$ such that 
$\inf\{\|x\beta(x)\|: x\in \mcl{K}_+, \|x\|=1\}>0$,
where $\beta=\alpha_t$. Suppose $(i')$. Then the $U_{t^{-1}}$-open set $F_t^c=\{ x\in U_{t^{-1}}: \theta_t(x)\neq x\}$ is dense in $U_{t^{-1}}$. As $V$ is $U_{t^{-1}}$-open it has a non-empty intersection with $F_t^c$. Therefore we can find a non-empty $U_{t^{-1}}$-open set $Y$ in $V$ such that $\theta_t(Y)$ is disjoint from $Y$. Hence there is a function $x$ in $\mcl{K}_+$ of norm one such that $x\beta(x)=0$. Contradiction, hence not $(i')$.

$(iii)\Rightarrow (ii')$:
Assume not $(ii')$. Then there exists a nonzero $\alpha_t$-invariant ideal $\mcl{K}=\czero(V)$ in $\czero(U_{t^{-1}})$ (for some $t\neq e$) such that $\|\alpha_t|_\mcl{K} - id|_\mcl{K}\|=0$. We conclude that $\alpha_t(x)=x$ for all $x \in \mcl{K}$. Hence $\inf\{\|x\alpha_t(x)\|: x\in \mcl{K}_+, \|x\|=1\}$ is nonzero implying not $(iii)$.

$(ii')\Rightarrow (i')$:
Assume not $(i')$. Find $t\neq e$ and a $U_{t^{-1}}$-open nonempty subset $V$ in $F_t$ (recall that $F_t$ is $U_{t^{-1}}$-closed but has nonempty interior). Since $\theta_t(U_{t^{-1}})=U_{t}$ we see that each $x\in V\subseteq U_{t^{-1}}$ also belongs to $U_{t}$ and $\theta_{t^{-1}}(x)=x$, $x\in V$. Hence $\mcl{K}:=\czero(V)$ is $\alpha_t$-invariant (\ie{}$\alpha_t(f)=f=\alpha_{t^{-1}}(f)$, $f\in \mcl{K}$). We obtain that $\|\alpha_t|_\mcl{K} - id|_\mcl{K}\|=0$. Hence not $(ii')$.

$(iii)\Rightarrow (iv')$: 
Fix any $t\neq e$, any $b_t\in \mcl{D}_t$, and any nonzero ideal $\mcl{K}$ in $\mcl{A}$. Define $\mcl{I}:=\mcl{K}\cap \mcl{D}_{t^{-1}}$. Suppose $\mcl{I}=0$. Fix any $x\in \mcl{K}$. Using that $xb_t\in \mcl{D}_t$ we get $\alpha_{t^{-1}}(xb_t)\in \mcl{D}_{t^{-1}}$ and $\alpha_{t^{-1}}(xb_t)x=0$. Hence ${x}( b_t\delta_t){x}=\alpha_t(\alpha_{t^{-1}}(xb_t)x)\delta_t=0$ and
$$\inf\{\|{x} b_t\delta_t){x}\|: x\in \mcl{K}_+, \|x\|=1\}=0.$$
Suppose $\mcl{I}\neq 0$. Then $\mcl{I}$ is an ideal in $\mcl{D}_{t^{-1}}$ and
$$\|{x}( b_t\delta_t){x}\|\leq \|\alpha_t(\alpha_{t^{-1}}(xb_t))\alpha_t(x)\|\leq \|b_t\|\|x\alpha_t(x)\|,\ \ \  x\in \mcl{I}_+.$$
It follows from $(iii)$ that
$$\inf\{\|{x}( b_t\delta_t){x}\|: x\in \mcl{K}_+, \|x\|=1\}=0.$$

$(iv')\Rightarrow (iii)$: 
Fix any $t\neq e$ and nonzero ideal $\mcl{K}$ in $\mcl{D}_{t^{-1}}$. Define $\mcl{I}:=\mcl{D}_{t}\cap \mcl{K}$.
Suppose $\mcl{I}=0$. Fix any $x\in \mcl{K}$. Then $x\in \mcl{D}_{t^{-1}}$, $\alpha_{t}(x)\in \mcl{D}_{t}$, $\alpha_{t}(x)x\in \mcl{D}_{t}\cap \mcl{K}$, and $\alpha_{t}(x)x=0$. Hence
$$\inf\{ \|x\alpha_t(x)\|: x\in \mcl{K}_+, \|x\|=1\|\}=0.$$
Suppose $\mcl{I}\neq 0$. Then $\mcl{I}:=\czero(V)$ is a nonzero ideal in $\mcl{A}$. Pick any $x\in V$. Choose an open set $U$ with compact closure such that 
$$x\in U\subseteq \bar{U}\subseteq V.$$
With $K:=\bar{U}\subseteq V$ there exist by Urysohn's Lemma a function $h\in\czero(V)$ such that
$$0\leq h\leq 1, \ \ \ h|_K=1.$$
Using $(iv)$ on $b_t:=h\in \mcl{D}_t$ and the nonzero ideal $\mcl{J}:=\czero(U)$ in $\mcl{A}$ (and the equality ${x}( b_t\delta_t){x}=xb_t\alpha_t(x)\delta_t$ valid for every $x\in \mcl{J}\subseteq \mcl{I} \subseteq \mcl{K}\subseteq \mcl{D}_{t^{-1}}$) we obtain
$$\inf\{ \|xb_t\alpha_t(x)\|: x\in \mcl{J}_+, \|x\|=1\|\}=0.$$
Since $h|_U=1$ we have that $xb_t=x$ for every $x\in J_+$, hence
$$\inf\{ \|x\alpha_t(x)\|: x\in \mcl{J}_+, \|x\|=1\|\}=0.$$
Since $\mcl{J}\subseteq \mcl{I} \subseteq \mcl{K}$ also
$$\inf\{ \|x\alpha_t(x)\|: x\in \mcl{K}_+, \|x\|=1\|\}=0.$$
\end{proof}

\subsection{\cst-algebras of an Integral Domain}
\label{appendixB}

Let $R$ be an integral domain (\ie{}a commutative unital ring without zero divisor). Set $R^\times:=R\setminus\{0\}$. Following Boava and Exel \cite{BoaExe} we impose that the quotient $R/(m)$ is finite, for all $m\neq 0$ in $R$ and let $\mathfrak{A}[R]$ denote the \emph{regular \cst-algebra of $R$}, i.e., the universal \cst-algebra generated by isometries $\{s_m \colon m\in R^\times\}$ and unitaries $\{u^n \colon  n\in R\}$ subject to the relations	
$$s_ms_{m'}=s_{mm'}, \ u^nu^{n'}=u^{n+n'}, \ s_mu^n=u^{mn}s_m, \ \sum_{l+(m)\in R/(m)}u^ls_m s_m^*u^{-l}=1,$$
for every $m,m'\in R^\times$ and $n,n'\in R$.
As in \cite{BoaExe} we let $K$ denote the field of fractions of $R$ and let $K^\times$ denote the set $K\backslash\{0\}$. To form a group one equip the semidirect product $K \rtimes K^\times=\{(u,w)\colon u\in K, w\in K^{\times}\}$ with the following two group operations 
$$(u,w)(u',w')=(u+u'w,ww'), \ \ \ (u,w)^{-1}=(-u/w,1/w).$$
Boava and Exel showed in \cite{BoaExe} that the regular \cst-algebra of $R$ is a partial crossed product by first showing that the algebra $\mathfrak{A}[R]$ is $^*$-isomorphic to a \emph{partial group algebra of $K \rtimes K^\times$} (with certain relations \mcl{R}), and then use that every partial group algebra is a partial crossed product.
We will not describe these isomorphisms but will instead focus on the the description of the partial crossed product.

We have a partial order on $K^\times$ given by $w\leq w'$ if $w'=wr$ for some $r\in R$. For $w\in K$ let $(w)$ denote the ideal $wR\subseteq K$. For each pair $w\leq w'$ in $K^\times$ we have a canonical map $\newfunk{p_{w,w'}}{(R+(w'))/(w')}{(R+(w))/(w)}$ given by $$p_{w,w'}(u_{w'}+(w'))=u_{w'}+(w).$$
Boava and Exel proved that the inverse limit $\lim_{\longleftarrow}\{(R+(w))/(w), \ p_{w,w'}\}$ is isomorphic to the space $X$ of all sequences 
$$(u_w+(w))_{w\in  K^\times} \in \prod_{w\in K^\times}(R+(w))/(w)$$ 
fulfilling that $u_{w'}+(w)=u_w+(w)$ if $w\leq w'$. When $(R+(w))/(w)$ is given the discrete topology and $(R+(w))/(w)$ the product topology $X$ becomes a compact topological space. Moreover, \cf{}\cite{BoaExe}, there is a partial action $\theta$ of $G$ on $X$ defined by
\begin{align*}
X_{(u,w)}&=\{(u_{w'}+(w'))_{w'}\in X \colon u_w + (w) = u + (w)\},\\
\theta_{(u,w)} &= (u_{w'}+(w'))_{w'}\mapsto (u + wu_{w^{-1}w'} + (w'))_{w'},
\end{align*}
for any $(u,w) \in K\rtimes K^\times$. The partial actions $\theta$ induces a partial dynamical system $(\ccont(X), K \rtimes K^\times, \alpha)$, and hence also a partial crossed product $\ccont(X)\rtimes_{\alpha,r} (K \rtimes K^\times)$.
\begin{theorem}[Boava-Exel]
\label{appendixB1}
Let $R$ be a integral domain with finite quotients $R/(m)$, $m\in R^\times$. Suppose that $R$ is not a filed. Then the maps
$$u_n\mapsto 1_X\delta_{(n,1)}, \ \ \ s_m\mapsto 1_{X_{(0,m)}}\delta_{(0,m)}$$
induce a $^*$-isomorphism between $\mathfrak{A}[R]$ and $\ccont(X)\rtimes_{\alpha,r} (K \rtimes K^\times)$.
\end{theorem}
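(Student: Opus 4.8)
The plan is to follow Boava and Exel: first realise $\mathfrak{A}[R]$ as a partial group $\cst$-algebra of $G=K\rtimes K^\times$ subject to the relations $\mcl{R}$ recorded in Appendix \ref{appendixB}, and then invoke the general principle that a partial group algebra with relations is a partial crossed product over the spectrum of the commutative subalgebra generated by its range projections. Throughout one uses that $G=K\rtimes K^\times$ is metabelian, hence amenable, so that the full and reduced partial crossed products agree; this lets us apply the universal property of the full crossed product and guarantees that the canonical conditional expectation onto the diagonal is faithful.

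To produce the forward map I would, by the universal property of $\mathfrak{A}[R]$, only need to check that the candidate images $U^n:=1_X\delta_{(n,1)}$ and $S_m:=1_{X_{(0,m)}}\delta_{(0,m)}$ satisfy the four defining relations. Using the product rule $(a\delta_t)(b\delta_s)=\alpha_t(\alpha_{t^{-1}}(a)b)\delta_{ts}$ together with the group laws $(0,m)(0,m')=(0,mm')$, $(n,1)(n',1)=(n+n',1)$ and $(0,m)(n,1)=(mn,m)=(mn,1)(0,m)$, the identities $S_mS_{m'}=S_{mm'}$, $U^nU^{n'}=U^{n+n'}$ and $S_mU^n=U^{mn}S_m$ reduce to routine manipulations of indicator functions once one observes $X_{(n,1)}=X$ (because $(1)=R$). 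The only substantial relation is the last one: a direct computation gives $S_mS_m^*=1_{X_{(0,m)}}$ and $U^lS_mS_m^*U^{-l}=1_{\theta_{(l,1)}(X_{(0,m)})}$, so $\sum_{l+(m)\in R/(m)}U^lS_mS_m^*U^{-l}=1$ is precisely the statement that, as $l$ runs over a transversal of $R/(m)$, the clopen sets $\theta_{(l,1)}(X_{(0,m)})$ form a finite disjoint partition of $X$. This is immediate from the description of $X_{(u,w)}$: the map $\theta_{(l,1)}$ shifts each coordinate by $l$, so these translates are exactly the fibres over the possible values of the $m$-th coordinate in $(R+(m))/(m)$. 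Hence there is a $^*$-homomorphism $\phi\colon\mathfrak{A}[R]\to\ccont(X)\rtimes_{\alpha,r}G$ with $\phi(u^n)=U^n$ and $\phi(s_m)=S_m$.

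Surjectivity is then routine. The range of $\phi$ contains $1_{X_{(0,m)}}=S_mS_m^*$ and, after conjugating by the $U^n$ and multiplying by suitable $S_m,S_m^*$, every $1_{X_{(u,w)}}$; since the sets $(X_t)_{t\in G}$ are compact-open and form a basis for the totally disconnected space $X$, their indicators generate $\ccont(X)$, and multiplying these by the $\delta_t$ recovers a dense subset of $\ccont(X)\rtimes_{\alpha,r}G$.

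Injectivity is the \emph{main obstacle}, and it is here that the partial group algebra picture does the real work. Let $\mcl{D}\subseteq\mathfrak{A}[R]$ be the commutative diagonal generated by the range projections $e_{(u,w)}$ (the elements mapping to the $1_{X_{(u,w)}}$). The crux is to show that $\phi|_{\mcl{D}}$ is an isomorphism onto $\ccont(X)$; equivalently, that the spectrum of the universal commutative algebra generated by the $e_{(u,w)}$, subject only to the relations forced by $\mcl{R}$, is exactly the inverse limit $\lim_{\longleftarrow}\{(R+(w))/(w),\,p_{w,w'}\}=X$. The partition relations verified above are precisely the compatibility conditions $u_{w'}+(w)=u_w+(w)$ defining that inverse limit, and the delicate point is to check that they impose no further collapsing---this is the content of Boava and Exel's spectrum computation in \cite{BoaExe}. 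Granting that $\phi|_{\mcl{D}}$ is injective, injectivity of $\phi$ follows by the usual argument: $\phi$ respects the $G$-gradings and hence intertwines the canonical conditional expectations onto the two diagonals, so for $a$ with $\phi(a)=0$ one has $\phi(E(a^*a))=E'(\phi(a)^*\phi(a))=0$ with $E(a^*a)\in\mcl{D}$, whence $E(a^*a)=0$ and, by faithfulness of $E$ (amenability), $a=0$.
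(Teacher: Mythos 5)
The paper itself offers no proof of this theorem: it is imported from Boava--Exel \cite{BoaExe}, and the appendix explicitly declines to describe the isomorphisms, so the only comparison available is with the two-step strategy the paper attributes to that source (realise $\mathfrak{A}[R]$ as a partial group algebra of $K\rtimes K^\times$, then identify the spectrum of its diagonal with $X$). Your proposal reconstructs exactly that route --- universal-property map, surjectivity, injectivity via the diagonal plus a faithful conditional expectation coming from amenability --- with the routine verifications (the four relations, in particular the partition relation, and the indicator-function computations) done correctly, and with the same two nontrivial inputs (the spectrum computation showing $\phi|_{\mcl{D}}$ is injective, and the $G$-grading of $\mathfrak{A}[R]$ that underlies the expectation) deferred to \cite{BoaExe}, just as the paper defers them.
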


\bibliographystyle{amsplain}

\begin{thebibliography}{10}

\bibitem{Aba}
Fernando Abadie, \emph{Enveloping actions and {T}akai duality for partial
  actions}, J. Funct. Anal. \textbf{197} (2003), no.~1, 14--67. \MR{1957674
  (2004c:46130)}

\bibitem{HueRae}
Astrid an~Huef and Iain Raeburn, \emph{The ideal structure of {C}untz-{K}rieger
  algebras}, Ergodic Theory Dynam. Systems \textbf{17} (1997), no.~3, 611--624.
  \MR{1452183 (98k:46098)}

\bibitem{ArcSpi}
R.~J. Archbold and J.~S. Spielberg, \emph{Topologically free actions and ideals
  in discrete {$C^*$}-dynamical systems}, Proc. Edinburgh Math. Soc. (2)
  \textbf{37} (1994), no.~1, 119--124. \MR{1258035 (94m:46101)}

\bibitem{Bla}
B.~Blackadar, \emph{Operator algebras}, Encyclopaedia of Mathematical Sciences,
  vol. 122, Springer-Verlag, Berlin, 2006, Theory of $C{^{*}}$-algebras and von
  Neumann algebras, Operator Algebras and Non-commutative Geometry, III.
  \MR{2188261 (2006k:46082)}

\bibitem{BoaExe}
Giuliano Boava and Ruy Exel, \emph{Partial crossed product description of the
  {$C\sp *$}-algebras associated with integral domains}, 2012.

\bibitem{BroPed}
Lawrence~G. Brown and Gert~K. Pedersen, \emph{{$C\sp *$}-algebras of real rank
  zero}, J. Funct. Anal. \textbf{99} (1991), no.~1, 131--149. \MR{MR1120918
  (92m:46086)}

\bibitem{CunKri}
Joachim Cuntz and Wolfgang Krieger, \emph{A class of {$C^{\ast} $}-algebras and
  topological {M}arkov chains}, Invent. Math. \textbf{56} (1980), no.~3,
  251--268. \MR{561974 (82f:46073a)}

\bibitem{Dix}
Jacques Dixmier, \emph{Les {$C^{\ast} $}-alg\`ebres et leurs
  repr\'esentations}, Deuxi\`eme \'edition. Cahiers Scientifiques, Fasc. XXIX,
  Gauthier-Villars \'Editeur, Paris, 1969. \MR{0246136 (39 \#7442)}

\bibitem{DokExe}
M.~Dokuchaev and R.~Exel, \emph{Associativity of crossed products by partial
  actions, enveloping actions and partial representations}, Trans. Amer. Math.
  Soc. \textbf{357} (2005), no.~5, 1931--1952. \MR{2115083 (2005i:16066)}

\bibitem{EchLac}
Siegfried Echterhoff and Marcelo Laca, \emph{The primitive ideal space of the
  {$C\sp *$}-algebra of the affine semigroup of algebraic integers}, Math.
  Proc. Cambridge Philos. Soc. \textbf{154} (2013), 119--126.

\bibitem{Ell}
George~A. Elliott, \emph{Some simple {$C^{\ast} $}-algebras constructed as
  crossed products with discrete outer automorphism groups}, Publ. Res. Inst.
  Math. Sci. \textbf{16} (1980), no.~1, 299--311. \MR{574038 (81j:46102)}

\bibitem{ExeGioGon}
R.~Exel, T.~Giordano, and D.~Gon{\c{c}}alves, \emph{Enveloping algebras of
  partial actions as groupoid {$C^*$}-algebras}, J. Operator Theory \textbf{65}
  (2011), no.~1, 197--210. \MR{2765764 (2012f:46096)}

\bibitem{Exe5}
Ruy Exel, \emph{The {B}unce-{D}eddens algebras as crossed products by partial
  automorphisms}, Bol. Soc. Brasil. Mat. (N.S.) \textbf{25} (1994), no.~2,
  173--179. \MR{1306559 (95m:46091)}

\bibitem{Exe2}
Ruy Exel, \emph{Circle actions on {$C^*$}-algebras, partial automorphisms, and
  a generalized {P}imsner-{V}oiculescu exact sequence}, J. Funct. Anal.
  \textbf{122} (1994), no.~2, 361--401. \MR{1276163 (95g:46122)}

\bibitem{Exe6}
\bysame, \emph{Approximately finite {$C^\ast$}-algebras and partial
  automorphisms}, Math. Scand. \textbf{77} (1995), no.~2, 281--288. \MR{1379271
  (97e:46085)}

\bibitem{Exe}
\bysame, \emph{Amenability for {F}ell bundles}, J. Reine Angew. Math.
  \textbf{492} (1997), 41--73. \MR{1488064 (99a:46131)}

\bibitem{Exe4}
\bysame, \emph{Partial actions of groups and actions of inverse semigroups},
  Proc. Amer. Math. Soc. \textbf{126} (1998), no.~12, 3481--3494. \MR{1469405
  (99b:46102)}

\bibitem{Exe3}
\bysame, \emph{Exact groups and {F}ell bundles}, Math. Ann. \textbf{323}
  (2002), no.~2, 259--266. \MR{1913042 (2003e:46095)}

\bibitem{ExeLacQui}
Ruy Exel, Marcelo Laca, and John Quigg, \emph{Partial dynamical systems and
  {$C^*$}-algebras generated by partial isometries}, J. Operator Theory
  \textbf{47} (2002), no.~1, 169--186. \MR{1905819 (2003f:46108)}

\bibitem{ExeNg}
Ruy Exel and Chi-Keung Ng, \emph{Approximation property of {$C^*$}-algebraic
  bundles}, Math. Proc. Cambridge Philos. Soc. \textbf{132} (2002), no.~3,
  509--522. \MR{1891686 (2002k:46189)}

\bibitem{FelDor}
J.~M.~G. Fell and R.~S. Doran, \emph{Representations of {$^*$}-algebras,
  locally compact groups, and {B}anach {$^*$}-algebraic bundles. {V}ol. 2},
  Pure and Applied Mathematics, vol. 126, Academic Press Inc., Boston, MA,
  1988, Banach ${^{*}}$-algebraic bundles, induced representations, and the
  generalized Mackey analysis. \MR{936629 (90c:46002)}

\bibitem{FowLacRae}
Neal~J. Fowler, Marcelo Laca, and Iain Raeburn, \emph{The {$C^*$}-algebras of
  infinite graphs}, Proc. Amer. Math. Soc. \textbf{128} (2000), no.~8,
  2319--2327. \MR{1670363 (2000k:46079)}

\bibitem{Gon}
Daniel Gon{\c{c}}alves, \emph{Produtos cruzados}, 2001, Tese de mestrado,
  Departamento de Matematica, UFSC.

\bibitem{Gre}
Philip Green, \emph{The local structure of twisted covariance algebras}, Acta
  Math. \textbf{140} (1978), no.~3-4, 191--250. \MR{0493349 (58 \#12376)}

\bibitem{HonSzy}
Jeong~Hee Hong and Wojciech Szyma{\'n}ski, \emph{Purely infinite
  {C}untz-{K}rieger algebras of directed graphs}, Bull. London Math. Soc.
  \textbf{35} (2003), no.~5, 689--696. \MR{1989499 (2005c:46097)}

\bibitem{Hop}
Alan Hopenwasser, \emph{Partial crossed product presentations for {$O_n$} and
  {$M_k(O_n)$} using amenable groups}, Houston J. Math. \textbf{33} (2007),
  no.~3, 861--876. \MR{2335740 (2008k:46161)}

\bibitem{JenTho}
Kjeld~Knudsen Jensen and Klaus Thomsen, \emph{Elements of {$KK$}-theory},
  Mathematics: Theory \& Applications, Birkh\"auser Boston Inc., Boston, MA,
  1991. \MR{1124848 (94b:19008)}

\bibitem{KelMonRor}
Julian Kellerhals, Nicolas Monod,  and Mikael R{\o}rdam, \emph{Non-supramenable 
groups acting on locally compact spaces}, preprint (2013)

\bibitem{KirRor}
Eberhard Kirchberg and Mikael R{\o}rdam, \emph{Non-simple purely infinite
  {$C^\ast$}-algebras}, Amer. J. Math. \textbf{122} (2000), no.~3, 637--666.
  \MR{1759891 (2001k:46088)}

\bibitem{KirSie}
Eberhard Kirchberg and Adam Sierakowski, \emph{Strong pure infiniteness of
  crossed products}.

\bibitem{Kis}
Akitaka Kishimoto, \emph{Outer automorphisms and reduced crossed products of
  simple {$C^{\ast} $}-algebras}, Comm. Math. Phys. \textbf{81} (1981), no.~3,
  429--435. \MR{634163 (83c:46061)}

\bibitem{KunPasRae}
Alex Kumjian, David Pask, Iain Raeburn, and Jean Renault, \emph{Graphs,
  groupoids, and {C}untz-{K}rieger algebras}, J. Funct. Anal. \textbf{144}
  (1997), no.~2, 505--541. \MR{1432596 (98g:46083)}

\bibitem{Leb}
A.~V. Lebedev, \emph{Topologically free partial actions and faithful
  representations of crossed products}, Funktsional. Anal. i Prilozhen.
  \textbf{39} (2005), no.~3, 54--63, 96. \MR{2174606 (2006f:46068)}

\bibitem{ManRaeSut}
M.~H. Mann, Iain Raeburn, and C.~E. Sutherland, \emph{Representations of finite
  groups and {C}untz-{K}rieger algebras}, Bull. Austral. Math. Soc. \textbf{46}
  (1992), no.~2, 225--243. \MR{1183780 (93k:46046)}

\bibitem{McClan}
Kevin McClanahan, \emph{{$K$}-theory for partial crossed products by discrete
  groups}, J. Funct. Anal. \textbf{130} (1995), no.~1, 77--117. \MR{1331978
  (96i:46083)}

\bibitem{OlePed3}
Dorte Olesen and Gert~K. Pedersen, \emph{Applications of the {C}onnes spectrum
  to {$C^{\ast} $}-dynamical systems. {III}}, J. Funct. Anal. \textbf{45}
  (1982), no.~3, 357--390. \MR{650187 (83i:46080)}

\bibitem{QuiRae}
John Quigg and Iain Raeburn, \emph{Characterisations of crossed products by
  partial actions}, J. Operator Theory \textbf{37} (1997), no.~2, 311--340.
  \MR{1452280 (99a:46121)}

\bibitem{Ror}
Mikael R{\o}rdam, \emph{On the structure of simple {$C\sp *$}-algebras tensored
  with a {UHF}-algebra. {II}}, J. Funct. Anal. \textbf{107} (1992), no.~2,
  255--269. \MR{MR1172023 (93f:46094)}

\bibitem{RorSie}
Mikael R{\o}rdam and Adam Sierakowski, \emph{Purely infinite {$C^*$}-algebras
  arising from crossed products}, Ergodic Theory Dynam. Systems \textbf{32}
  (2012), no.~1, 273--293. \MR{2873171 (2012m:46063)}

\bibitem{Sie2}
Adam Sierakowski, \emph{Discrete crossed product {$C\sp *$}-algebras}, 2009,
  PhD thesis published by PhD Series, ISBN 978-87-91927-40-9, University of Copenhagen.

\bibitem{Sie}
\bysame, \emph{The ideal structure of reduced crossed products}, M\"unster J.
  Math. \textbf{3} (2010), 237--261. \MR{2775364 (2012g:46103)}

\end{thebibliography}

\providecommand{\bysame}{\leavevmode\hbox to3em{\hrulefill}\thinspace}
\providecommand{\MR}{\relax\ifhmode\unskip\space\fi MR }
\providecommand{\MRhref}[2]{%
  \href{http://www.ams.org/mathscinet-getitem?mr=#1}{#2}
}
\providecommand{\href}[2]{#2}

\end{document}